\documentclass{siamart251104}
\usepackage{amsmath}
\usepackage{framed}
\usepackage{float}
\usepackage{diagbox}
\usepackage{bbold}
\usepackage{hyperref}
\usepackage{enumitem}
\usepackage{graphicx} % Required for \includegraphics
\usepackage{caption} % Required for caption customization
\usepackage{subcaption} % Required for subfigures

\newtheorem{thm}{Theorem}

\newtheorem{lem}{Lemma}

\newtheorem{prop}{Proposition}
\newtheorem{rmk}{Remark}

\newcommand{\dt}{\Delta t}

\newcommand{\ybar}{\bar{y}}
\newcommand{\m}[1]{\mathbf{#1}}

\newcommand{\vy}{\m{y}}
\newcommand{\vz}{\m{z}}
\newcommand{\vb}{\m{b}}

\newcommand{\ve}{\m{e}}

\newcommand{\vh}{\m{h}}

\newcommand{\mQ}{\m{P}} %stupid definition because I was lazy

\newcommand{\mI}{\m{I}}
\newcommand{\mA}{\m{A}}
\newcommand{\mB}{\m{B}}

\newcommand{\mAh}{\m{\hat{A}}}

\title{Stable corrections for  perturbed diagonally  implicit Runge--Kutta methods}

\author{John Driscoll\thanks{Mathematics Department, University of Massachusetts Dartmouth,285 Old Westport Rd. North Dartmouth, MA 02747.}\and Sigal Gottlieb*\thanks{sgottlieb@umassd.edu}\and
Zachary J. Grant*\and
César Herrera\thanks{Department of Mathematics, Purdue University, 150 North University Street.
West Lafayette, Indiana 47907}\and
Tej Sai Kakumanu*\and
Michael H. Sawicki*\and
Monica Stephens\thanks{Mathematics Department, Spelman College, 350 Spelman Lane S.W.  Atlanta, GA 30314} 
}

\begin{document}
\maketitle

%\begin{keywords}
%\end{keywords}

\bibliographystyle{siam}

\begin{abstract}  
A mixed accuracy framework for Runge--Kutta methods presented in \cite{Grant2022}
and applied to diagonally implicit Runge--Kutta  (DIRK) methods
can significantly speed up the computation by replacing the 
implicit solver by less expensive low accuracy approaches
such as lower precision computation of the implicit solve, under-resolved iterative solvers,
or simpler, less accurate models for the implicit stages. 
Understanding the effect of the perturbation errors introduced by the 
low accuracy computations enables the design of  stable and accurate
mixed accuracy DIRK methods where the errors from the  
low-accuracy computation  are damped out by multiplication 
by $\dt$ at  multiple points in the simulation, 
resulting in a more accurate simulation than if 
low-accuracy was used for all computation.
To improve upon this,  explicit corrections were proposed and
analyzed for accuracy in \cite{Grant2022}, and their performance was 
tested in \cite{Burnett1, Burnett2}. Explicit corrections work well when the 
time-step is sufficiently small,  but may introduce instabilities 
when the time-step is larger. In this work, the stability of the mixed accuracy approach
is carefully studied, and used to design novel stabilized correction approaches.
  \end{abstract}

\noindent{\bf Keywords:} Runge--Kutta methods; perturbed DIRK methods; mixed precision;
stabilized corrections;

\noindent{\bf Classification codes:} 65Mxx, 65M20, 	65L04, 65M70, 65L05.

\section{Overview} 
Diagonally implicit Runge--Kutta (DIRK) methods \cite{CarpenterRK} are often
used for the time evolution of a system of ordinary
differential equations (ODEs) of the form
\begin{eqnarray} \label{ODE}
    y' &=& f(y), \; \; \; y(0) = y_0.
\end{eqnarray} 
Such systems may result from the semi-discretization of a partial differential equation (PDE).
DIRK methods require costly implicit solves, but
their large linear stability  regions allow for 
larger step-sizes. This becomes an important consideration when the problem is {\em stiff}.  
In such cases, explicit methods are not feasible because the time-step is severely limited by stability rather than accuracy considerations.

A mixed accuracy framework for DIRK methods
allows us to speed up the computation of the implicit solves without degrading the overall accuracy \cite{Grant2022}. Such approaches may include lower precision computation of the implicit solve, under-resolved iterative solvers,
or simpler, less accurate models for the implicit stages. 
The key idea is that the expensive part of 
the implicit solve  can be evaluated using a 
computationally inexpensive strategy.  

Using the theory in \cite{Grant2022} we can understand 
the effect of the perturbation errors introduced by the 
low accuracy computations. This allows the 
design of DIRK methods that mitigate the impact of 
the low accuracy perturbations on the overall solution. 
The DIRK methods can be designed so that the errors from the  
low-accuracy computation  are damped out by multiplication 
by $\dt$ at  multiple points in the simulation, 
resulting in a more accurate simulation than if 
low-accuracy was used for all computation.
However, the resulting methods are only first order for sufficiently small time-steps.
In \cite{Grant2022}, explicit corrections were proposed to improve the 
accuracy of the mixed precision solutions. The performance and stability 
of this  approach in the mixed precision case was 
tested in \cite{Burnett1, Burnett2}, and was shown to 
work as predicted.  However, it was shown that the explicit
corrections may introduce instabilities.
In this work, we aim to better understand the impact
of low-accuracy perturbations on the stability of the
approach presented in \cite{Grant2022}.
In particular, we will use this understanding 
to better design the low accuracy approaches for 
the implicit solves, and to  design  stabilized
correction approaches.

The paper is organized as follows: In Section \ref{sec:stability}
we present the accuracy and stability analysis of perturbed methods.
In Section \ref{sec:sec3num} we use a nonlinear model to show the impact
of linearization and of mixed precision, and verify that this matches
with the theory in Section \ref{sec:stability}.
In Section \ref{sec:corr} we introduce our approach to corrections that
enhance stability and accuracy. These depend on a stabilization matrix $\Phi$, and approaches
to defining such matrices are describe in Section \ref{sec:phi}.
In Section \ref{sec:NumCor} we study numerically the impact of the stabilized
correction approaches on three model problems. Finally, in Section \ref{sec:conclusions}
we summarize our conclusions for this work.

\section{Accuracy and stability analysis of perturbed methods} \label{sec:stability}

We begin with an initial value problem of the form
\eqref{ODE}, where the function $f$ is contractive \cite{hairer1996solving2}:
\begin{subequations}
\begin{eqnarray} \label{contractivity}
    \left( x -  y , f(x) -  f(y) \right)  \leq 0 \; \; \; 
    \mbox{for any $x, y$},
\end{eqnarray}
and its derivative is bounded
\begin{eqnarray} \label{Derivative}
\left\| f'(y) \right\| \leq L \; \; \; \mbox{for some $L>0$.}
\end{eqnarray}
\end{subequations}
We focus particularly on the case where $f$ is contractive, 
because we do not expect a  non-contractive process to effectively
damp out   the perturbations introduced by the mixed accuracy approach.

We evolve the solution forward using   a DIRK method 
\begin{subequations} \label{DIRK}
\begin{eqnarray} 
    z^{(i)} & = & z_n + \dt \sum_{j=1}^{i} a_{ij} f(z^{(j)})   \\
    z_{n+1} & = & z_n + \dt \sum_{i=1}^s b_{i} f(z^{(i)}) .
\end{eqnarray}
\end{subequations}
We assume that this method has 
coefficients given in an $s \times s$
lower triangular matrix $\mA = (a_{ij})$, 
and the column vector $\vb = (b_i)$ (and the associated matrix $\mB = diag(\vb)$)
such that 
\begin{subequations} \label{Coefficients}
\begin{eqnarray} 
    a_{ii} & \geq & 0, \; \; \; b_i  \geq  0 ,\; \; \; 
    c_i =   \sum_j a_{ij} \; \; \; \mbox{are distinct} \\
    M & = &  \mB \mA + \mA^T \mB - \vb \vb^T 
    \; \; \; \mbox{is semi positive definite} .
\end{eqnarray} 
\end{subequations}
Note that these conditions  mean that the method 
satisfies the conditions for a type of nonlinear inner product 
stability known as B-stability \cite{CrouzeixBstab}.

To make the implicit stages cheaper to invert,
we  chose to replace $f(y^{(i)})$ with another function $f_{\varepsilon}(y^{(i)})$   for computing the stage values $ y^{(i)}$.
The method
then takes the form:
\begin{subequations} \label{MPDIRK}
\begin{eqnarray} 
    y^{(i)} & = & y_n + 
    \dt \left( \sum_{j=1}^{i-1} a_{ij} f(y^{(j)})
    + a_{ii} f_{\varepsilon}(y^{(i)}) \right) \\
    y_{n+1} & = & y_n + \dt \sum_{i=1}^s b_{i} f(y^{(i)}) .
\end{eqnarray}
\end{subequations}
This strategy introduces a perturbation
\[ h(y) = f(y) - f_{\varepsilon}(y) \]
into the internal stages, which can then be expressed as 
\begin{eqnarray} 
    y^{(i)}  & = & y_n + \dt \sum_{j=1}^{i} a_{ij} f(y^{(j)})
    - \dt a_{ii} h(y^{(i)}) .
\end{eqnarray}
This is a common approach that is used whenever the implicit 
stage is not evaluated exactly, e.g. 
when $f$ is approximated by a lower precision computation or a linear operator.
 In fact, a perturbation of this form is introduced  whenever the implicit stage 
 is approximated by  some iterative procedure such as  Newton's iteration. Of particular interest to us are
 nonsmooth perturbations that stem from the use
 of mixed precision computations. In the case where $f$
 is computed in high precision, and $f_\varepsilon$
 is computed in low precision, the resulting
 $h = f - f_\varepsilon$ is not a continuous function.

In this section we bound the growth of the perturbation
errors 
by studying the difference 
between  \eqref{DIRK} and \eqref{MPDIRK}: 
\begin{subequations} \label{errors}
\begin{eqnarray} 
 z^{(i)} -  y^{(i)} & = & z_n - y_n + \dt 
 \sum_{j=1}^{i} a_{ij}  \left( f(z^{(j)})  - f(y^{(j)}) \right) 
    + \dt a_{ii} h(y^{(i)})  \\
 z_{n+1} -   y_{n+1} & = & z_n - y_n + 
 \dt \sum_{i=1}^s b_{i} \left( f(z^{(i)})  - f(y^{(i)}) \right) .
\end{eqnarray}
\end{subequations}
To simplify the notation, we temporarily pretend that 
$y$ and $z$ are scalars,  to avoid 
the use of cumbersome Kronecker products. 
However, everything in this work carries through to the 
trivially (but with some painful notation) to the vector case.

The following lemma  bounds  the growth of the perturbation errors from timestep to 
timestep using the errors from the internal stages. This growth will depend on the size 
of the perturbation and the stiffness  of the problem.

\begin{lem} \label{lem1}
    Given a differential equation of the form \eqref{ODE}
    that is evolved forward with the 
method \eqref{MPDIRK} using the 
function $f_{\varepsilon}$  where
    \[ \left\| h(y^{(i)})\right\| = \left\|f(y^{(i)}) - f_{\varepsilon}(y^{(i)}) \right\| \leq \varepsilon_i. \]
If the coefficients of \eqref{MPDIRK}
satisfy the conditions \eqref{Coefficients},
then the growth of the errors resulting from  $h$
is bounded by:
%   \begin{eqnarray}\label{boundMPgrowth}
% \|y_{n+1} - z_{n+1}\|^2 &\leq & \|y_n - z_n\|^2 
% + 2 \dt  \mu  \sum_{i=1}^s b_{i} 
% \left\| z^{(i)} -  y^{(i)} \right\|^2 \\
% &+& 2 \dt^2 L \varepsilon 
% \sum_{i=1}^s b_{i} a_{ii}
% \left\| z^{(i)} -  y^{(i)} \right\| \;  . \nonumber
% \end{eqnarray}  
  \begin{eqnarray}\label{boundMPcont}
\left\|z_{n+1} - y_{n+1}\right\|^2 &\leq & 
\left\|z_n - y_n\right\|^2  
+ 2 \dt^2 L  \sum_{i=1}^s \varepsilon_i b_{i} a_{ii} \left\| z^{(i)} -  y^{(i)} \right\|  .
    \end{eqnarray}
\end{lem}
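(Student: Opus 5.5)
We follow the classical B-stability argument of Burrage--Butcher / Crouzeix, carrying along the perturbation term $\dt a_{ii} h(y^{(i)})$. Throughout, write $e_n = z_n - y_n$, $E_i = z^{(i)} - y^{(i)}$, and $F_i = f(z^{(i)}) - f(y^{(i)})$, and work in the scalar notation adopted in the text (the vector case only adds Kronecker products). With this notation the error equations \eqref{errors} become
\[
E_i = e_n + \dt \sum_{j} a_{ij} F_j + \dt a_{ii} h(y^{(i)}), \qquad e_{n+1} = e_n + \dt \sum_i b_i F_i .
\]

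\textbf{Step 1: expand the update.} Squaring the update equation gives
\[
\|e_{n+1}\|^2 = \|e_n\|^2 + 2\dt \sum_i b_i (e_n, F_i) + \dt^2 \sum_{i,j} b_i b_j (F_i,F_j).
\]

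\textbf{Step 2: eliminate $e_n$.} Solve the stage equation for $e_n$:
\[
e_n = E_i - \dt \sum_j a_{ij} F_j - \dt a_{ii} h(y^{(i)}).
\]
Substituting this into the cross term yields
\[
2\dt \sum_i b_i (E_i,F_i) \;-\; 2\dt^2 \sum_{i,j} b_i a_{ij} (F_j,F_i) \;-\; 2\dt^2 \sum_i b_i a_{ii} \bigl(h(y^{(i)}),F_i\bigr).
\]
Combining the two $\dt^2$ quadratic terms, and symmetrizing $\sum_{i,j} b_i a_{ij}(F_i,F_j)$, gives exactly $-\dt^2 \sum_{i,j} M_{ij}(F_i,F_j)$ with $M = \mB\mA + \mA^T\mB - \vb\vb^T$. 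The identity is therefore
\[
\|e_{n+1}\|^2 = \|e_n\|^2 + 2\dt\sum_i b_i (E_i,F_i) - \dt^2 \sum_{i,j} M_{ij}(F_i,F_j) - 2\dt^2\sum_i b_i a_{ii}\bigl(h(y^{(i)}),F_i\bigr).
\]

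\textbf{Step 3: sign the first three terms.} Contractivity \eqref{contractivity} together with $b_i\ge0$ makes the second term nonpositive. Positive semidefiniteness of $M$ in \eqref{Coefficients} makes the third term nonpositive. Both can be dropped.

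\textbf{Step 4: bound the perturbation term.} By Cauchy--Schwarz,
\[
\bigl|\bigl(h(y^{(i)}),F_i\bigr)\bigr| \le \varepsilon_i \|F_i\|.
\]
By the mean value theorem with \eqref{Derivative}, $\|F_i\| \le L\|E_i\|$. Since $b_i a_{ii} \ge 0$, this gives
\[
-2\dt^2 \sum_i b_i a_{ii}\bigl(h(y^{(i)}),F_i\bigr) \le 2\dt^2 L \sum_i \varepsilon_i b_i a_{ii} \|E_i\|,
\]
which is \eqref{boundMPcont}.

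The main obstacle is the bookkeeping in Step 2. One must check that the $h$-term enters only through the diagonal entry $a_{ii}$ and only in the cross term, and that the symmetrization produces exactly $M$. The remaining steps are direct.

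Note also that $h$ need not be continuous, since only the bound $\varepsilon_i$ on $h(y^{(i)})$ is used. Lipschitz continuity is needed for $f$ alone, and it is supplied by \eqref{Derivative}.
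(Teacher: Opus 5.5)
Your proof is correct and is essentially the paper's own argument. You expand $\|z_{n+1}-y_{n+1}\|^2$, eliminate $z_n-y_n$ via the stage equations, collect the quadratic terms into $-(\Psi, M\Psi)$ and drop them, drop the contractivity term, and bound the remaining $h$-term by Cauchy--Schwarz together with $\|f(z^{(i)})-f(y^{(i)})\|\le L\|z^{(i)}-y^{(i)}\|$; the only cosmetic difference is that the paper keeps those last two pieces together as $2\dt\sum_i b_i\bigl(f(z^{(i)})-f(y^{(i)}),\, z^{(i)}-y^{(i)}-\dt a_{ii}h(y^{(i)})\bigr)$ before splitting them, whereas you separate them right away.
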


\begin{proof}
We look at the inner product of the difference between $z$ and $y$ at each time-step, where for simplicity we define
$\psi_i = \dt ( f(z^{(i)}) - f(y^{(i)})) $, and the associated vector $\Psi$.
\begin{eqnarray*}
\|z_{n+1} - y_{n+1}\|^2   &=&  \|z_n - y_n +
 \sum_{i=1}^s b_{i}  \psi_i  \|^2 \\
 &=& \| z_n - y_n\|^2 +
2  \sum_{i=1}^s b_{i} (z_n - y_n)^T \psi_i
 + \left( \vb \Psi ,\vb \Psi \right) \\
  &=&  \| z_n - y_n\|^2 
 + \left( \vb \Psi ,\vb \Psi \right)  \\ 
 & + & 2 \sum_{i=1}^s b_{i} \psi_i^T \left(
    ( z^{(i)} -  y^{(i)} ) -  \sum_{j=1}^{i} a_{ij} \psi_j 
    - \dt a_{ii} h(y^{(i)})
    \right) \\
    &=&  \| z_n - y_n\|^2  - \left(  \Psi ,M \Psi \right) 
+ 2 \sum_{i=1}^s b_{i} 
 \left( \psi_i, z^{(i)} -  y^{(i)}  - \dt a_{ii} h(y^{(i)}) \right)  \\
& \leq & \| z_n - y_n\|^2 + 2 \dt \sum_{i=1}^s b_{i} 
 \left( f(z^{(i)}) -  f(y^{(i)}) , z^{(i)} -  y^{(i)} 
 - \dt a_{ii}  h(y^{(i)}) \right) ,
\end{eqnarray*}
the inequality follows from the fact that   $M$ is semi-positive definite by assumption, 
so that $( \Psi, M \Psi)  \geq 0$.
  Using the contractivity of $f$,
 and 
\[ \left\| f(z) -  f(y) \right\| 
= \left\| f'(\xi) \right\| \left\| z -  y \right\|  \leq L \left\|z -  y \right\| ,\] we have 
 \begin{eqnarray*}
\left\| z_{n+1} - y_{n+1} \right\|^2  
& \leq &
\left\| z_n - y_n\right\|^2   + 
 2 \dt^2 L \sum_{i=1}^s b_{i} a_{ii} 
 \left\|z^{(i)} -  y^{(i)} \right\|
\left\| h(y^{(i)})\right\| ,
 \end{eqnarray*}
 using the bound on $\left\| h(y^{(i)})\right\|$
we obtain our result.
{\em Note that this proof approach follows directly from
 \cite{CrouzeixBstab}.}
\end{proof}

% \begin{rmk}
%     If the method is not B-stable, we expect some 
%     extra growth at the final stage
% \begin{eqnarray*}
% \left|z_{n+1} - y_{n+1}\right|   
% &\leq &  \left|z_n - y_n \right|+
% \dt  \sum_{i=1}^s b_{i} \left| f(z^{i}) -   f(y^{i})\right| \\
% &\leq &  \left|z_n - y_n \right|+
% \dt L  \sum_{i=1}^s b_{i} \left| z^{i} - y^{i}\right|
% \end{eqnarray*}
% \end{rmk}
%%%%%%%%%%%%%%%%%%%%%%%%%%%%%%%%%%%%%%%%%%%%%%%%%%%%%
%%%%%               Monica Comment               %%%%
%  This would have helped me if it was located directly following Theorem 1, but I understand you are using it as a lead-in into the next section.
%%%%%%%%%%%%%%%%%%%%%%%%%%%%%%%%%%%%%%%%%%%%%%%%%%%%%
{Lemma 1 expresses that the growth of the errors
depends on the size of the perturbation at each stage $h(y^{(i)})$, 
the stiffness of the problem as represented by $L$, 
and the internal stage errors $ \left\| z^{(i)} - y^{(i)}\right\|$.}
In the next section we bound the internal stage errors 
resulting from the perturbation, and this enables us to
bound the final time error more directly.

\subsection{Bounding the internal stage errors}
Our goal in this section is to bound the internal stage
perturbation errors 
$\| z^{(i)} - y^{(i)} \|$ 
to better understand the resulting error
at each time-step:
\begin{eqnarray*}
\| z_{n+1} - y_{n+1}\|^2 &\leq &
\|z_n - y_n \|^2   + 2 \dt^2 L 
\sum_{i=1}^s b_{i} a_{ii}
\underbrace{\left\| z^{(i)} -  y^{(i)} \right\|}_{\mbox{stage}}   
\underbrace{\left\| h(y^{(i)}) \right\|}_{\mbox{perturbation}}.
    \end{eqnarray*}

 The next subsections gradually build this theory
for the one stage  SDIRK2 (also known as the implicit midpoint rule IMR), 
the two stage SDIRK3 method, 
and finally a general $s$-stage DIRK method.

\subsubsection{Implicit midpoint rule}
The second order  SDIRK2 or implicit midpoint rule (IMR)  
can be written in its  Runge--Kutta form
\begin{subequations} \label{IMR}
\begin{eqnarray} 
z^{(1)} & = & z_n + \frac{1}{2} \dt f(z^{(1)})   \\ 
z_{n+1} & = & z_n + \dt f(z^{(1)}) .
\end{eqnarray}
\end{subequations}
The mixed precision version of this method is 
\begin{subequations} \label{MPIMR}
\begin{eqnarray} 
y^{(1)} & = & y_n + \frac{1}{2} \dt f_\varepsilon (y^{(1)})   \\ 
y_{n+1} & = & y_n + \dt f(y^{(1)}) .
\end{eqnarray}
\end{subequations}

To bound the first stage we start with the following proposition:

\begin{prop} \label{prop1}
Given a contractive function $f$ and a constant $\delta \geq 0 $
\begin{eqnarray} \label{ImplicitStageBound}
\left\|z - y \right\|
& \leq &
\left\|(z - y)   - \delta  \left(f(z) -  f(y)  \right) \right\|. 
\end{eqnarray}
for any   $y$ and $z$. 
\end{prop}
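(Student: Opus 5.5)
The plan is to expand the square of the right-hand side of \eqref{ImplicitStageBound} in the inner product and then drop terms whose sign is fixed by the contractivity condition \eqref{contractivity} and by $\delta \geq 0$. Write $e = z - y$ and $g = f(z) - f(y)$. We need $\|e\| \leq \|e - \delta g\|$, and since both sides are nonnegative it suffices to show $\|e\|^2 \leq \|e - \delta g\|^2$.

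First expand:
\begin{eqnarray*}
\left\| e - \delta g \right\|^2 & = & \left\| e \right\|^2 - 2\delta \left( e, g \right) + \delta^2 \left\| g \right\|^2 .
\end{eqnarray*}
Next, \eqref{contractivity} applied to the pair $z, y$ gives $(e, g) = (z - y, f(z) - f(y)) \leq 0$. Since $\delta \geq 0$, the middle term satisfies $-2\delta (e,g) \geq 0$, and the last term $\delta^2 \|g\|^2$ is trivially nonnegative. Discarding both yields $\|e - \delta g\|^2 \geq \|e\|^2$, and taking square roots gives the claim.

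There is no real obstacle here; the only points to check are that the norm is the one induced by the inner product used in \eqref{contractivity}, and that $\delta \geq 0$ is what makes the cross term have the right sign. In the vector (Kronecker) setting the same computation holds verbatim, because only the inner-product identity and the sign of $(e,g)$ are used. The bounded-derivative assumption \eqref{Derivative} is not needed.

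In the subsequent application, $\delta$ will be $\frac{1}{2}\dt$ (or $\dt a_{ii}$ in general). The expression $(z-y) - \delta (f(z)-f(y))$ then equals $z_n - y_n$ plus the perturbation term coming from \eqref{errors}, which is what gives the stage error bound.
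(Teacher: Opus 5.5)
Your proof is correct and is essentially the paper's argument. You expand $\|(z-y)-\delta(f(z)-f(y))\|^2$, use contractivity together with $\delta \geq 0$ to see that the cross term $-2\delta\,(z-y,f(z)-f(y))$ is nonnegative, and drop it along with $\delta^2\|f(z)-f(y)\|^2$, exactly as the paper does (your caveat that the norm must be the one induced by the inner product in \eqref{contractivity} is something the paper leaves implicit).
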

\begin{proof}
Begin by noting that the contractivity of $f$ gives
\begin{eqnarray*}
    \left(z - y, f(z) -  f(y) \right) \leq 0 
\end{eqnarray*}  
so that
\begin{eqnarray*} 
\left\|z - y \right\|^2
& \leq &
\left\|z - y \right\|^2  
- 2 \delta   \left(z - y, f(z) -  f(y) \right)  + 
\delta^2 \left\|f(z) - f(y) \right\|^2 \\
& = & \| (z - y)  - \delta \left( f(z) - f(y)\right) \|^2.
\end{eqnarray*}

\end{proof}

A consequence of Proposition \ref{prop1} 
is that for the implicit midpoint rule we have
\begin{eqnarray*}
\left\| z^{(1)} -  y^{(1)} \right\|
& \leq & 
\left\|z^{(1)} -  y^{(1)} - \frac{1}{2} \dt \left(f(z^{(1)}) -  f(y^{(1)}) \right) \right\| \\
%%%%%%%%%%%%%%%%%%%%%%%%%%%%%%%%%%%%%%%%%%%%%%%%%%%%%
%%%%%               Monica Comment               %%%%
%       horizontal space between \delt t \hspace{0.02in} h(y(i))
%%%%%%%%%%%%%%%%%%%%%%%%%%%%%%%%%%%%%%%%%%%%%%%%%%%%%
& = & \left\| z_n - y_n  + \frac{1}{2} \dt  \; h(y^{(1)}) \right\| \leq  
 \left\| z_n - y_n \right\| + \frac{1}{2} \dt \varepsilon_1.
\end{eqnarray*}
Plugging this into the error bound \eqref{boundMPcont} we get:
\begin{eqnarray*}
\|z_{n+1} - y_{n+1}\|^2 &\leq & \|z_n - y_n\|^2  
+  \dt^2 L  \left\| z^{(1)} -  y^{(1)} \right\| \;  \varepsilon_1 \\
&\leq &  \|z_n - y_n\|^2  +  \dt^2 L \left(
        \left\|z_{n} - y_{n}\right\|   +  \frac{1}{2} \dt  \varepsilon_1  
        \right) \varepsilon_1   \\
 & \leq  & \left( {\|z_n - y_n\|} + \frac{1}{2} \dt^2 L \varepsilon_1
 \right)^2 + \left( \frac{1}{2}   - \frac{1}{4} \dt L \right)  \dt^3 L \varepsilon_1^2 
    \end{eqnarray*}
so that
\begin{subequations}
\begin{eqnarray}
    \|z_{n+1} - y_{n+1}\| &\leq&
    \|z_n - y_n\| + \frac{1}{2} \varepsilon_1 \dt^2 L  
+  \varepsilon_1 \dt \sqrt{\frac{\dt L }{2}}  .
\end{eqnarray}
Additionally, if  $ \dt \geq \frac{2}{L}$ we can  conclude that:
\begin{eqnarray}
%%%%%%%%%%%%%%%%%%%%%%%%%%%%%%%%%%%%%%%%%%%%%%%%%%%%%
%%%%%               Monica Comment               %%%%
%     \\z_n - y_n\\ for consistency
%%%%%%%%%%%%%%%%%%%%%%%%%%%%%%%%%%%%%%%%%%%%%%%%%%%%%
 \|z_{n+1} - y_{n+1}\| \leq {\|z_n - y_n\| }
+ \frac{1}{2} \varepsilon_1 \dt^2 L .
\end{eqnarray}
\end{subequations}
This is a reasonable assumption since we 
are dealing with stiff problems where $L$ is large,
which is the scenario that DIRK methods are intended to handle.

\subsubsection{The two stage third order SDIRK method}
When dealing with two stages, 
the analysis becomes more involved.
The SDIRK3 method \cite{norsett} is given by 
\begin{subequations}
\begin{eqnarray}\label{SDIRK3} 
z^{(1)} & = & z_n + \gamma\dt f(z^{(1)})   \\ 
    z^{(2)} & = & z_n + (1-2\gamma)\dt  f(z^{(1)}) + \gamma\dt f(z^{(2)})\\
    z_{n+1} & = & z_n + \frac{\dt}{2}f(z^{(1)}) + \frac{\dt}{2}f(z^{(2)}),
\end{eqnarray}
\end{subequations}
where $\gamma = \frac{\sqrt{3} + 3}{6}$.
We can verify that conditions \eqref{Coefficients} are satisfied.
The mixed precision version of this method is
\begin{subequations} \label{MPSDIRK3} 
\begin{eqnarray}
y^{(1)} & = & y_n + \gamma\dt f_\varepsilon (y^{(1)})   \\ 
    y^{(2)} & = & y_n + (1-2\gamma)\dt  f(y^{(1)}) + \gamma\dt f_\varepsilon (y^{(2)})\\
    y_{n+1} & = & y_n + \frac{\dt}{2}f(y^{(1)}) + \frac{\dt}{2}f(y^{(2)}),
\end{eqnarray}
\end{subequations}

From Proposition \ref{prop1} we know that the first stage errors
are bounded by
\begin{eqnarray*}
\left\| z^{(1)} - y^{(1)} \right\| & \leq &
\left\|z_{n} - y_{n} \right\|   
+ a_{11} \varepsilon_1 \dt   .
\end{eqnarray*}
We now proceed to the second stage, once again using 
Proposition \ref{prop1}:
\begin{eqnarray*}
\left \| z^{(2)} - y^{(2)}\right\| & \leq &
\left \| z^{(2)} - y^{(2)}  - \dt   a_{22}  \left(f(z^{(2)}) -  f(y^{(2)}) \right) \right\| \\
& = & \left \|z_n - y_n + \dt a_{21} \left( f(z^{(1)}) - f(y^{(1)}) \right) + \dt a_{22} h(y^{(2)}) \right\| \\
& = & \left \|z_n - y_n + 
\frac{a_{21}}{a_{11}} \left( (z^{(1)} - y^{(1)})
- (z_n - y_n) - \dt a_{11} h(y^{(1)}) \right) \right. \\
&& \left. + \dt a_{22} h(y^{(2)}) \right\| \\
& \leq  & \left(1 +  \frac{|a_{21}|}{a_{11}}\right) 
\left\|z_n - y_n\right\| +
\frac{|a_{21}|}{a_{11}}  \left\| z^{(1)} - y^{(1)} \right\| 
+  \dt \left(|a_{21}|\varepsilon_1 + a_{22} \varepsilon_2\right)  \\
& \leq & \left(1 +  \frac{2 |a_{21}|}{a_{11}}\right) 
\left\|z_n - y_n\right\| +
 \dt \left(2 |a_{21}| \varepsilon_1+ a_{22} \varepsilon_2\right) .
\end{eqnarray*}
% Letting $\varepsilon = max\left\{\varepsilon_1, \varepsilon_2\right\}$,  the error growth at the 
% second stage of the SDIRK method \eqref{SDIRK3} is bounded by
% \begin{eqnarray*}
% \left \| z^{(2)} - y^{(2)}\right\| & \leq &
% \left( \frac{5 \gamma - 2}{\gamma} \right) 
% \left\|z_n - y_n\right\| +
% \varepsilon \dt \left(5 \gamma - 2 \right) .
% \end{eqnarray*}
%%%%%%%%%%%%%%%%%%%%%%%%%%%%%%%%%%%%%%%%%%%%%%%%%%%%%
%%%%%               Monica Comment               %%%%
%    Changing to this will place the subscript under \max inline \epsilon = \max\limits_i \epsilon_i  
%  Is it premature to define \epsilon here?  You do it later.
%%%%%%%%%%%%%%%%%%%%%%%%%%%%%%%%%%%%%%%%%%%%%%%%%%%%%
Plugging this back into Lemma \ref{lem1}, and using the coefficients
of the scheme we obtain
\begin{eqnarray*}
    \|z_{n+1} - y_{n+1}\|^2 &\leq &
        \|z_n - y_n\|^2  + 2 \dt^2 L \sum_{i=1}^s b_{i} a_{ii}
        \left\| z^{(i)} -  y^{(i)} \right\| \;  \left\| h(y^{(i)}) \right\| \\
  &\leq &  \|z_n - y_n\|^2  +    \gamma \dt^2 L \varepsilon_1 
         \left\| z^{(1)} -  y^{(1)} \right\| 
        +  \gamma \dt^2 L \varepsilon_2  \left\| z^{(2)} -  y^{(2)}\right\|    \\
  &\leq &    \|z_n - y_n\|^2  +    
  \gamma \dt^2 L \varepsilon_1 
        \left( \left\|z_{n} - y_{n} \right\|   +   \gamma  \varepsilon_1 \dt \right) \\
& + & \gamma \dt^2 L \varepsilon_2 \left(  
\left( 1 + 2 \frac{|1- 2\gamma|}{\gamma} \right) 
\left\|z_n - y_n\right\| +
 \dt \left(2 |1- 2 \gamma| \varepsilon_1 + \gamma \varepsilon_2 
        \right) \right) \\
  &= &    \|z_n - y_n\|^2  +    
  \gamma \dt^2 L \varepsilon_1 
        \left( \left\|z_{n} - y_{n} \right\|   +   \gamma  \varepsilon_1 \dt \right) \\
& + & \dt^2 L \varepsilon_2 \big( 
\left( 5 \gamma -2  \right) 
\left\|z_n - y_n\right\| +
\gamma \dt \left( (4\gamma -2) \varepsilon_1 + \gamma \varepsilon_2 \right)  \big) .
\end{eqnarray*}
For simplicity, we let $\varepsilon = \max_i \varepsilon_i$, and get
\begin{eqnarray*}
    \|z_{n+1} - y_{n+1}\|^2 &\leq &
     \|z_n - y_n\|^2  +  
    2 \dt^2 L \varepsilon \left( 3 \gamma -1 
    \right) \left\|z_{n} - y_{n} \right\| 
+   2 \dt^3 L \varepsilon^2 \gamma \left(3 \gamma -1
           \right).
\end{eqnarray*}
Completing the square  we get
\begin{eqnarray*}
    \|z_{n+1} - y_{n+1}\|^2 &\leq &
\big( \|z_n - y_n\|  +   \dt^2 L \varepsilon 
\left( 3 \gamma -1 \right)\big)^2
+ \dt^3 L \varepsilon^2 \left( 3 \gamma -1 \right)
\big( 2 \gamma - \dt L \left( 3 \gamma -1 \right)
\big). %\\
% &\leq &
% \big( \|y_n - z_n\|  +   \dt^3 L \varepsilon 
% \left( 3 \gamma -1 \right)\big)^2
% + \dt^3 L \varepsilon^2 \left( 6 \gamma^2 -2 \gamma \right).
   \end{eqnarray*}  
Neglecting the $O(\dt^4)$ term, which is negative, we get
\begin{subequations}
\begin{eqnarray} \label{SDIRKbound1}
    \|z_{n+1} - y_{n+1}\| &\leq &
    \|z_n - y_n\|  + 
 \varepsilon \dt^2 L  \left(3 \gamma - 1\right)
  + \varepsilon  \dt \sqrt{\dt L } \sqrt{6 \gamma^2 - 2 \gamma}.
\end{eqnarray}
Alternatively, if we wish to consider only 
$\dt \geq \frac{2 \gamma}{(3 \gamma - 1)L}$,
then we have 
\begin{eqnarray} \label{SDIRKbound2}
    \|z_{n+1} - y_{n+1}\| &\leq &
    \|z_n - y_n\|  + 
\varepsilon \dt^2 L  \left(3 \gamma - 1\right).
\end{eqnarray}
\end{subequations}

It would be natural to move on to a method with more stages, for example
the three stage fourth order SDIRK method \cite{CrouzeixBstab}:
 
\begin{eqnarray}\label{SDIRK4} 
z^{(1)} & = & z_n + \frac{1+\alpha}{2} \dt f(z^{(1)})   \nonumber \\ 
z^{(2)} & = & z_n - \frac{\alpha}{2} \dt  f(z^{(1)}) 
+ \frac{1+\alpha}{2} \dt f(z^{(2)}) \nonumber\\
z^{(3)} & = & z_n + (1+\alpha) \dt  f(z^{(1)}) 
-  (1+2 \alpha)  \dt f(z^{(2)}) +\frac{1+\alpha}{2} \dt f(z^{(3)})  \nonumber\\
    z_{n+1} & = & z_n + \frac{\dt}{6 \alpha^2} \big(f(z^{(1)}) + (6 \alpha^2 -2) f(z^{(2)}) + f(z^{(3)})\big) ,
\end{eqnarray}
and its mixed accuracy analog
\begin{eqnarray} \label{MPSDIRK4}
y^{(1)} & = & y_n + \frac{1+\alpha}{2} \dt f_\varepsilon(y^{(1)})  \nonumber \\ 
y^{(2)} & = & y_n - \frac{\alpha}{2} \dt  f(y^{(1)}) 
+ \frac{1+\alpha}{2} \dt f_\varepsilon(y^{(2)}) \nonumber\\
y^{(3)} & = & y_n + (1+\alpha) \dt  f(y^{(1)}) 
-  (1+2 \alpha)  \dt f(y^{(2)}) 
+\frac{1+\alpha}{2} \dt f_\varepsilon(y^{(3)})  \nonumber \\
    y_{n+1} & = & y_n + \frac{\dt}{6 \alpha^2} \big(f(y^{(1)}) 
    + (6 \alpha^2 -2) f(y^{(2)}) + f(y^{(3)})\big) ,
\end{eqnarray}
where $\alpha = \frac{2}{\sqrt{3}} \cos(\frac{\pi}{18}) $.
However, at this point we will move on to a general formulation that includes this method
as well as many others in the class of \eqref{MPDIRK}.

\subsection{General DIRK method}
We now turn to the general case of the 
errors from an $s$-stage method \eqref{errors}.
% Recall that Theorem ? gave us the bound
% \eqref{boundMPcont}:
% \begin{eqnarray*}
% \|z_{n+1} - y_{n+1}\|^2 &\leq &
% \|z_n - y_n\|^2   + 2 \dt^2  L 
% \sum_{i=1}^s b_{i} a_{ii} \varepsilon_i
% \underbrace{\left\| z^{(i)} -  y^{(i)} \right\|}_{\mbox{stage  errors}}  .
%     \end{eqnarray*}
The following lemma
bounds the growth of these stage errors.

\begin{lem} \label{lem2}
Let  $z^{(i)}$ be the $i$th stage of 
\eqref{DIRK} and $y^{(i)}$ be the $i$th stage of  \eqref{MPDIRK} , 
If the perturbation error vector is bounded 
\[ \max_i \| h(y^{(i)}) \| = \vh_i \leq \varepsilon_i \leq \varepsilon \] 
then the  stage error will be bounded by
\begin{eqnarray} \label{StageErrorBound}
\left \| z^{(i)} - y^{(i)}\right\| 
& \leq & K_i \left\|z_n - y_n\right\| +
 \dt C_i  .
\end{eqnarray}
where 
\[  K_i  =  1 + 2 \left( \sum_{\ell =1}^{s-1} 
\left| (\mA- \mAh) \mA^{-1} \right|^\ell
\ve \right)_i \]
and
\[ C_i  =   a_{ii} \vh_i 
+  2 \left( \sum_{\ell =1}^{s-1} \left| (\mA- \mAh) \mA^{-1} \right|^\ell \mAh \vh \right)_i .\]
(The notation $|\cdot|$ here denotes the componentwise absolute value of
the matrix).
\end{lem}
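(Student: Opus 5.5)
We generalize the two-stage SDIRK3 computation. We take $\mAh = \mathrm{diag}(a_{11},\dots,a_{ss})$ to be the diagonal part of $\mA$, so that $\mA-\mAh$ is strictly lower triangular. Then $N := (\mA-\mAh)\mA^{-1}$ is strictly lower triangular and hence nilpotent, with $N^s = 0$. This explains why the sums stop at $\ell = s-1$.

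Write $e_i = z^{(i)} - y^{(i)}$, $e_n = z_n - y_n$ and $\psi_j = \dt\,(f(z^{(j)}) - f(y^{(j)}))$, collected in the vector $\Psi$. The stage error equation \eqref{errors} reads, in vector form,
\[ \m{E} = e_n \ve + \mA \Psi + \dt\, \mAh \vh, \]
where $\vh$ now denotes the vector of perturbations $h(y^{(i)})$ and $\ve$ is the vector of ones.

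\textbf{Step 1 (Proposition \ref{prop1} at each stage).} Apply Proposition \ref{prop1} with $\delta = \dt\, a_{ii}\ge 0$. This gives
\[ \|e_i\| \le \|e_i - a_{ii}\psi_i\| = \Big\| e_n + \sum_{j<i} a_{ij}\psi_j + \dt\, a_{ii} h(y^{(i)})\Big\|. \]
In vector form the quantity inside the norm is the $i$th entry of $e_n\ve + (\mA-\mAh)\Psi + \dt\,\mAh\vh$.

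\textbf{Step 2 (eliminate $\Psi$).} Invert the stage relation using the invertibility of $\mA$ (which holds since $a_{ii}>0$):
\[ \Psi = \mA^{-1}\big(\m{E} - e_n\ve - \dt\,\mAh\vh\big). \]
Substituting, the $i$th entry becomes
\[ e_n \ve + N(\m{E} - e_n\ve - \dt\,\mAh\vh) + \dt\,\mAh\vh, \]
which generalizes the substitution $\psi_1 = (e_1 - e_n - \dt\, a_{11}h)/a_{11}$ used for SDIRK3. Taking norms componentwise and using the triangle inequality yields the vector inequality
\[ \|\m{E}\| \le \ve\,\|e_n\| + |N|\big(\|\m{E}\| + \ve\|e_n\| + \dt\,\mAh\vh\big) + \dt\,\mAh\vh, \]
understood componentwise with nonnegative entries. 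Because $N$ is strictly lower triangular, row $i$ of the right-hand side involves only $\|e_j\|$ for $j<i$.

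\textbf{Step 3 (unroll the recursion).} Iterate this inequality, equivalently argue by induction on $i$, substituting the bound for earlier stages. Nilpotency of $|N|$ terminates the expansion after $s-1$ steps. The coefficient of $\|e_n\|$ collects $\ve$ from the leading term, plus $|N|\ve$ from the explicit $\ve\|e_n\|$ inside, plus $|N|\ve$ again propagated through $\|\m{E}\|$, and so on. Bounding this by $1 + 2(\sum_\ell |N|^\ell \ve)_i$ gives $K_i$; likewise the $\dt$ terms give $C_i$.

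\textbf{Main obstacle.} The delicate step is this bookkeeping. A naive unrolling of $x \le u + |N| x$ gives $x \le \sum_{\ell\ge 0} |N|^\ell u$ with $u = (\mI+|N|)(\ve\|e_n\| + \dt\mAh\vh)$. Expanding, this produces the coefficients $\mI + 2\sum_{\ell\ge1}|N|^\ell$, up to a term $|N|^s = 0$, applied to $\ve\|e_n\|$ and to $\dt\,\mAh\vh$. The $\ell=0$ contribution of the $\vh$ part is exactly $a_{ii}\vh_i$, matching $C_i$. So I expect the geometric series to collapse exactly to the stated factor $2$. The one point to verify carefully is that the cross terms indeed telescope to $\mI + 2\sum_{\ell=1}^{s-1}|N|^\ell$. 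A consistency check is that $s=2$ reproduces the SDIRK3 bound.
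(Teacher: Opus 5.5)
Your proposal is correct and follows the paper's proof essentially step for step. The steps are: Proposition~\ref{prop1} with $\delta=\dt\,a_{ii}$; elimination of $\dt\,(f(\vz)-f(\vy))$ via $\mA^{-1}$; a componentwise triangle inequality giving $\|\vz-\vy\|\le (\mI+|N|)\big(\ve\,\|z_n-y_n\|+\dt\,\mAh\vh\big)+|N|\,\|\vz-\vy\|$; and then solving with $(\mI-|N|)^{-1}$. The bookkeeping you flag as the main obstacle is exactly the identity the paper uses, and it holds exactly, since $\sum_{\ell=0}^{s-1}|N|^\ell(\mI+|N|)=\mI+2\sum_{\ell=1}^{s-1}|N|^\ell+|N|^s$ with $|N|^s=0$.
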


\begin{proof}
From the definitions of \eqref{DIRK}
and \eqref{MPDIRK}, the  vector of 
internal errors is
\begin{eqnarray*} 
 \vz - \vy & = & \left(z_n - y_n\right) \ve  + \dt  \mA \left( f(\vz)  - f(\vy) \right) 
    + \dt \mAh h(\vy) ,
% z_{n+1} -   y_{n+1} & = & z_n - y_n + 
%\vb \left( f(\vz)  - f(\y) \right) ,
\end{eqnarray*}
where $\ve$ is a column vector of ones, and $\mAh$ is a matrix with only the diagonal entries of $\mA$. 
Up to now we have been considering the norm
of values that are scalars, as in $\|z_n - y_n\|$;
we now extend this notation trivially to the vector form, 
where by $\| \vz - \vy \|$  we do not mean
the vector norm, but rather a  vector of vector norm with elements 
$\left\| z^{(i)} - y^{(i)}\right\|$.
We use Proposition \ref{prop1} to give
\begin{eqnarray*}
\|  \vz - \vy  \| & \leq  &
\|  \vz - \vy - 
\dt \mAh (f(\vz)-f(\vy) )  \| \\
& =  & 
\left\| (z_n - y_n) \ve   
+ \dt (\mA - \mAh) (f(\vz)-f(\vy) )
+ \dt  \mAh h(\vy)  \right\| \\
& =  &
\left\|  (z_n - y_n) \ve +  (\mA- \mAh) \mA^{-1} \left(\vz - \vy -   (z_n - y_n) \ve  - \dt \mAh h(\vy) \right)  
+ \dt  \mAh h(\vy)  \right\|
\end{eqnarray*}
where we replaced
\begin{eqnarray*}
\dt  \left( f(\vz) - f(\vy) \right) &=& 
\mA^{-1} \left( \vz - \vy -   
(z_n - y_n) \ve - \dt \mAh h(\vy) \right) .
\end{eqnarray*} 
(Note that if  $a_{11} = 0$, we simply treat the first stage  
as explicit and proceed with the next stages.)
We proceed to bound the error at each stage
\begin{eqnarray*}
\|  \vz - \vy   \| & \leq  & 
\left\|  \left( \mI -(\mA- \mAh) \mA^{-1} \right)  (z_n - y_n) \ve    
\right\| 
+ \left\|   (\mA- \mAh) \mA^{-1}  
(\vz - \vy)    \right\| \\
& + & \dt \left\|   \left( 
\mI - (\mA - \mAh) \mA^{-1}   \right) \mAh h(\vy)    \right\|.
\end{eqnarray*}
We define the matrix 
$\mQ = \left| (\mA- \mAh) \mA^{-1} \right|$ where the 
$| \cdot |$ is taken element-wise. 
Note that $\mQ$  is a strictly lower triangular matrix.
Then we have 
\begin{eqnarray*}
\|   \vz - \vy   \| 
& \leq  & 
 \left\|(z_n - y_n) \right\|  \left(\mI + \mQ \right) \ve  
+ \mQ \left\| \vz - \vy    \right\|
+ \dt   \left( \mI + \mQ \right) \mAh  \vh
\end{eqnarray*}
so that
\begin{eqnarray*}
\| \vz - \vy \| & \leq  & \left\|   (z_n - y_n)  \right\| 
\left(\mI - \mQ \right)^{-1} \left(\mI + \mQ \right) \ve 
+  \dt \left(\mI - \mQ \right)^{-1} 
\left( \mI + \mQ \right) \mAh \vh ,
\end{eqnarray*}
where $\vh$ is a vector that contains the element-wise upper bound   $\left|h(\vy)_i\right| \leq \vh_i $. We observe that 
$ \left(\mI - \mQ \right)^{-1} \left(\mI + \mQ \right) 
= \mI + 2 \sum_{k=1}^{s-1} \mQ^k $, which is 
is a lower triangular matrix with ones on the diagonal and non-negative entries elsewhere.
Define
\begin{eqnarray*}
K_i  =     1 + 2 \left( \sum_{\ell =1}^{s-1} \mQ^\ell \ve \right)_i 
= 1 + 2 \left( \sum_{\ell =1}^{s-1} 
\left| (\mA- \mAh) \mA^{-1} \right|^\ell \ve \right)_i \\
C_i  =   a_{ii} \vh_i +  2 \left( \sum_{\ell =1}^{s-1} \mQ^\ell \mAh \vh \right)_i  
=   a_{ii} \vh_i +  2 \left( \sum_{\ell =1}^{s-1} \left| (\mA- \mAh) \mA^{-1} \right|^\ell \mAh \vh \right)_i 
\end{eqnarray*}
%\begin{equation*} \end{equation*}
so that the  errors at each stage are bounded 
by \eqref{StageErrorBound}.

\end{proof}

This bound on the internal stage errors enables us to state 
the major result of the paper. The following Theorem bounds the
growth of the errors from step to step, depending {\em only} 
on the coefficients of the
method, the stiffness of the problem, 
and the size of the perturbation.

\smallskip

\begin{thm} \label{thm:FinalConv}
Under the conditions in Lemma \ref{lem1} and 
Lemma \ref{lem2}, 
\begin{eqnarray} \label{ErrorGrowth1}
\|z_{n+1} - y_{n+1}\| & \leq & \|z_n - y_n\|
+ \dt^2 L \Theta +  \dt \sqrt{2 \Omega L \dt} ,
\end{eqnarray}
where
\begin{eqnarray*}
\Theta = \sum_{i=1}^s \varepsilon_i b_{i} a_{ii} 
\left(1 + 2 \left( \sum_{\ell =1}^{s-1} \left| (\mA- \mAh) \mA^{-1} \right|^\ell \ve \right)_i \; \right)
\end{eqnarray*}
and
\begin{eqnarray*}
\Omega = \sum_{i=1}^s \varepsilon_i  b_{i} a_{ii}  
\left(  a_{ii} \vh_i +  2 \left( \sum_{\ell =1}^{s-1} \left| (\mA- \mAh) \mA^{-1} \right|^\ell \mAh \vh \right)_i \; \right).
\end{eqnarray*}
If $\dt \geq 2 \Omega/(L \Theta^2)$, this
can be improved:
\begin{eqnarray} \label{ErrorGrowth2}
\|z_{n+1} - y_{n+1}\| & \leq & \|z_n - y_n\|
+   \dt^2 L \Theta . 
\end{eqnarray}
\end{thm}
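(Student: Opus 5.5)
Write $e_n = \|z_n - y_n\|$. The plan is to combine the step-growth bound of Lemma \ref{lem1} with the stage-error bound of Lemma \ref{lem2}, and then complete the square, exactly as was done for the implicit midpoint rule and SDIRK3.

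First, insert \eqref{StageErrorBound} into \eqref{boundMPcont}. This gives
\begin{eqnarray*}
e_{n+1}^2 &\leq& e_n^2 + 2\dt^2 L \sum_{i=1}^s \varepsilon_i b_i a_{ii}\left(K_i e_n + \dt C_i\right) \\
&=& e_n^2 + 2\dt^2 L\, \Theta\, e_n + 2\dt^3 L\, \Omega ,
\end{eqnarray*}
where $\Theta = \sum_i \varepsilon_i b_i a_{ii} K_i$ and $\Omega = \sum_i \varepsilon_i b_i a_{ii} C_i$. These are precisely the quantities in the statement, once the expressions for $K_i$ and $C_i$ from Lemma \ref{lem2} are substituted. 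This step uses $b_i \geq 0$ and $a_{ii} \geq 0$ from \eqref{Coefficients}, so that multiplying the stage inequalities by $\varepsilon_i b_i a_{ii}$ preserves their direction.

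Next, complete the square:
\begin{eqnarray*}
e_{n+1}^2 &\leq& \left(e_n + \dt^2 L \Theta\right)^2 + 2\dt^3 L\Omega - \dt^4 L^2 \Theta^2 .
\end{eqnarray*}
For \eqref{ErrorGrowth1}, drop the nonpositive term $-\dt^4 L^2\Theta^2$. Then use $\sqrt{a^2+b^2} \leq a + b$ for $a, b \geq 0$, with $a = e_n + \dt^2 L\Theta$ and $b = \sqrt{2\dt^3 L \Omega} = \dt\sqrt{2\Omega L \dt}$. Both are nonnegative because all of $\varepsilon_i$, $b_i$, $a_{ii}$ and the entries of $\mQ$ are nonnegative.

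For \eqref{ErrorGrowth2}, note that the remainder satisfies $2\dt^3 L\Omega - \dt^4 L^2\Theta^2 = \dt^3 L\left(2\Omega - \dt L\Theta^2\right) \leq 0$ exactly when $\dt \geq 2\Omega/(L\Theta^2)$. In that case $e_{n+1}^2 \leq (e_n + \dt^2 L\Theta)^2$, and taking square roots gives the improved bound.

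I expect no real obstacle: the argument is algebraic bookkeeping. The one point needing care is that Lemma \ref{lem2} bounds the stage errors through the perturbation bounds $\vh_i$, while Lemma \ref{lem1} uses $\varepsilon_i$. One must keep the two consistent, with $\vh_i$ entering only through $C_i$ and $\varepsilon_i$ multiplying outside, so that the definitions of $\Theta$ and $\Omega$ come out exactly as stated.
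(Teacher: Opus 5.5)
Your proposal is correct and follows the paper's proof step for step. You substitute the stage bounds of Lemma~\ref{lem2} into \eqref{boundMPcont} and collect $\Theta=\sum_i\varepsilon_i b_i a_{ii}K_i$ and $\Omega=\sum_i\varepsilon_i b_i a_{ii}C_i$. You then complete the square to get $(e_n+\dt^2L\Theta)^2+\dt^3L(2\Omega-\dt L\Theta^2)$, and either drop the nonpositive $\dt^4$ term or, when $\dt\ge 2\Omega/(L\Theta^2)$, the whole remainder. Your explicit remarks on the nonnegativity of $\varepsilon_i$, $b_i$, $a_{ii}$ and the entries of $\mQ$, and on $\sqrt{a^2+b^2}\le a+b$, simply spell out steps the paper leaves implicit.
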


\begin{proof}
We put the bounds of the internal stages 
\eqref{StageErrorBound} into Equation \eqref{boundMPcont} 
of Lemma \ref{lem1}:
\begin{eqnarray*}
\|z_{n+1} - y_{n+1}\|^2 &\leq &
\|z_n - y_n\|^2  + 2 \dt^2 L \sum_{i=1}^s \varepsilon_i b_{i} a_{ii}
\left\| z^{(i)} -  y^{(i)} \right\|   \\
&\leq &   \|z_n - y_n\|^2  
+ 2 \dt^2 L \sum_{i=1}^s \varepsilon_i  b_{i} a_{ii} 
\left( K_i \left\|z_n - y_n\right\| 
+  \dt C_i   \right)  \\
& = &   \|z_n - y_n\|^2  
+ 2   \dt^2  L   \Theta  \left\|z_n - y_n\right\| + 
2  \dt^3  L    \Omega  \\
& = &   \|z_n - y_n\|^2  
+ 2  \dt^2 L \Theta
\left\|z_n - y_n\right\| 
+  \dt^4 L^2  \Theta^2   +  \dt^3 L  \left( 2 \Omega - \dt L \Theta^2  \right) \\
& = & \left(\|z_n - y_n\| + \dt^2 L \Theta \right)^2 + 
\dt^3 L  \left( 2 \Omega - \dt L \Theta^2  \right) 
\end{eqnarray*}   
where $\Theta = \sum_{i=1}^s \varepsilon_i b_{i} a_{ii} K_i $
and $\Omega = \sum_{i=1}^s \varepsilon_i b_{i} a_{ii}  C_i$.
The bound \eqref{ErrorGrowth1} follows from 
neglecting the final $\dt L \Theta^2  $ term.
If  $\dt$ is large enough we have
$ 2 \Omega - \dt \Theta^2 L  \leq 0 $,
so that can neglect the entire final term
and obtain the bound \eqref{ErrorGrowth2}.
\end{proof}

 \begin{rmk}
    This theorem tells us that we are able to control the final time error
    by ensuring that the perturbation is small compared to the stiffness of the problem,
    the final time, and the time-step. The perturbation vector $\vh$ will depend on many 
    factors, including the size of the problem, the type of perturbation, and the 
    derivatives of $f$. An understanding of the perturbation itself is key to
    determining whether the final time error will be acceptable.
 \end{rmk}
 
\section{Understanding the perturbation errors:  a numerical study using
Burgers' equation} \label{sec:sec3num}
Our primary motivation in this work is to understand the impact of the pollution 
from the mixed accuracy or mixed precision computation of the 
nonlinear implicit stages on the final time solution.  
There are many possible sources of perturbation. For example,
iterative solutions of nonlinear systems are typically performed
as repeated linearizations, and a mixed precision implementation replaces
the repeated solution of a linear system with a low precision version of this  step. 
In this section we numerically investigate  the two sources  of error: 
(1) the errors  resulting from  linearizing the nonlinear 
$f$ using Taylor series and further perturbing this by truncating the inverse operator, 
and (2) the errors resulting from a  mixed precision implementation of a iterative
nonlinear solver. 

Consider the inviscid Burgers' equation
 \begin{eqnarray} \label{eq:Burgers}
     u_t + \left( \frac{1}{2} u^2 \right)_x =0,
 \end{eqnarray} 
 on the domain $x = (0,2\pi)$. The initial conditions and final time will vary 
 depending on our focus. In Section \ref{sec:BurgersLin} we first focus on errors coming from linearization,  where we introduce further perturbation  to show the impact of inaccurate implicit solves. 
Next in Section \ref{sec:BurgersMP} we implement the nonlinear solver with a 
mixed precision approach and  assess the errors resulting from it.

 \subsection{Linearization \& perturbation} \label{sec:BurgersLin}
In this section we consider Burgers' equation \eqref{eq:Burgers}
 initial condition $u(x,0)= \frac{1}{2} + \frac{1}{4} \sin(x)$ 
 and periodic boundary conditions. 
 We semi-discretize this equation in space using a Fourier spectral 
 method differentiation matrix $D_x$. 
 Hence, we aim to solve the differential equation
 \[ \frac{dy}{dt} = f(y) = - \frac{1}{2} D_x y^{2}.\]
 We evolve the solution to final time $T_f = 3.5$ 
using three time-stepping methods:
the mixed-model SDIRK2 \eqref{MPIMR},
 SDIRK3 \eqref{MPSDIRK3}, 
and the SDIRK4  \eqref{MPSDIRK4}.

 The low-accuracy function $f_\epsilon$ is given by a Taylor series linearization
 around $\ybar$
 \begin{eqnarray}\label{linB3}
f_{\epsilon}(y) &=&  f(\bar{y}) + 
    f'(\bar{y}) \left( y - \bar{y} \right) 
    =  - \frac{1}{2} D_x \bar{y}^2
    - D_x \bar{Y} \left( y - \bar{y} \right) ,
    \end{eqnarray}
where we use $\bar{Y}$.

\begin{figure}[H]
    \centering
    \begin{subfigure}[b]{0.32\textwidth}
        \centering        \includegraphics[width=\textwidth]{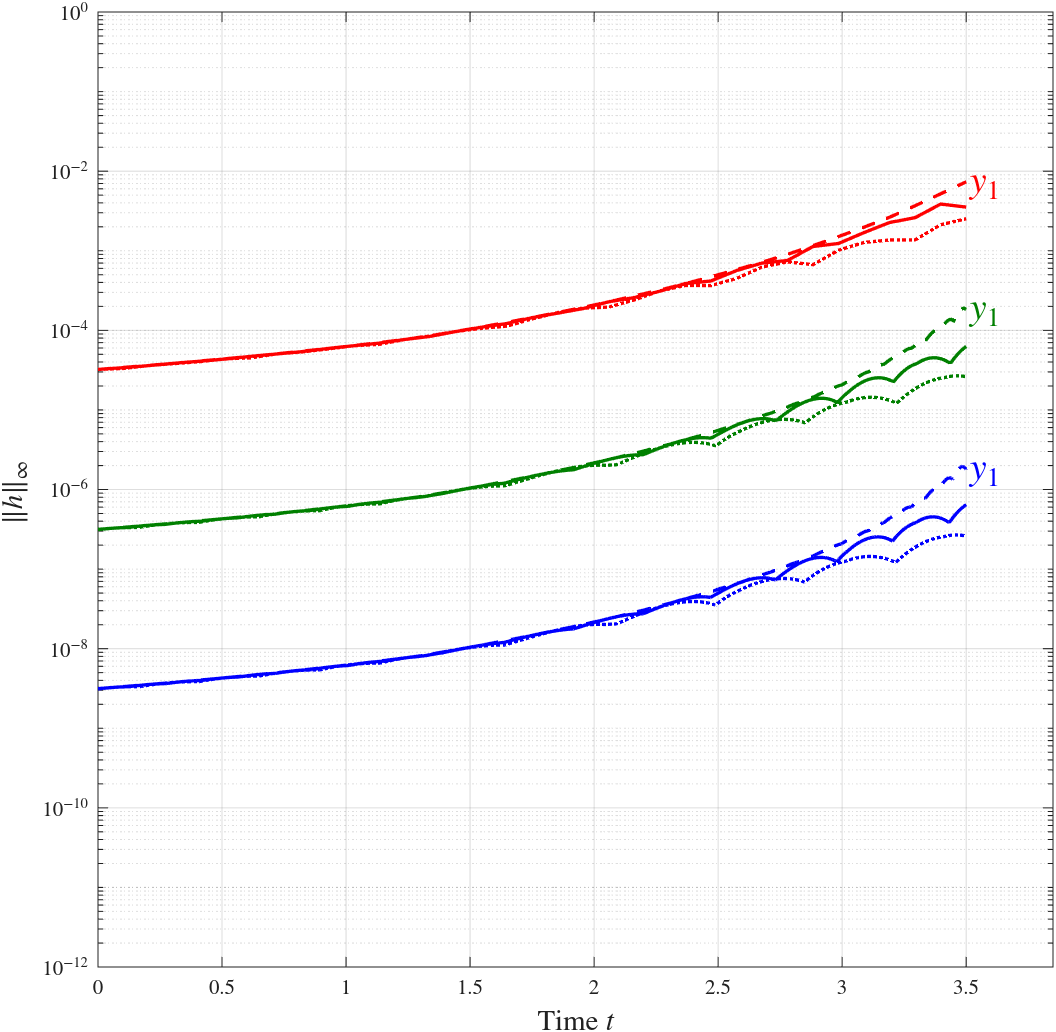}
    \end{subfigure}
    \hfill
   \begin{subfigure}[b]{0.32\textwidth}
        \centering        \includegraphics[width=\textwidth]{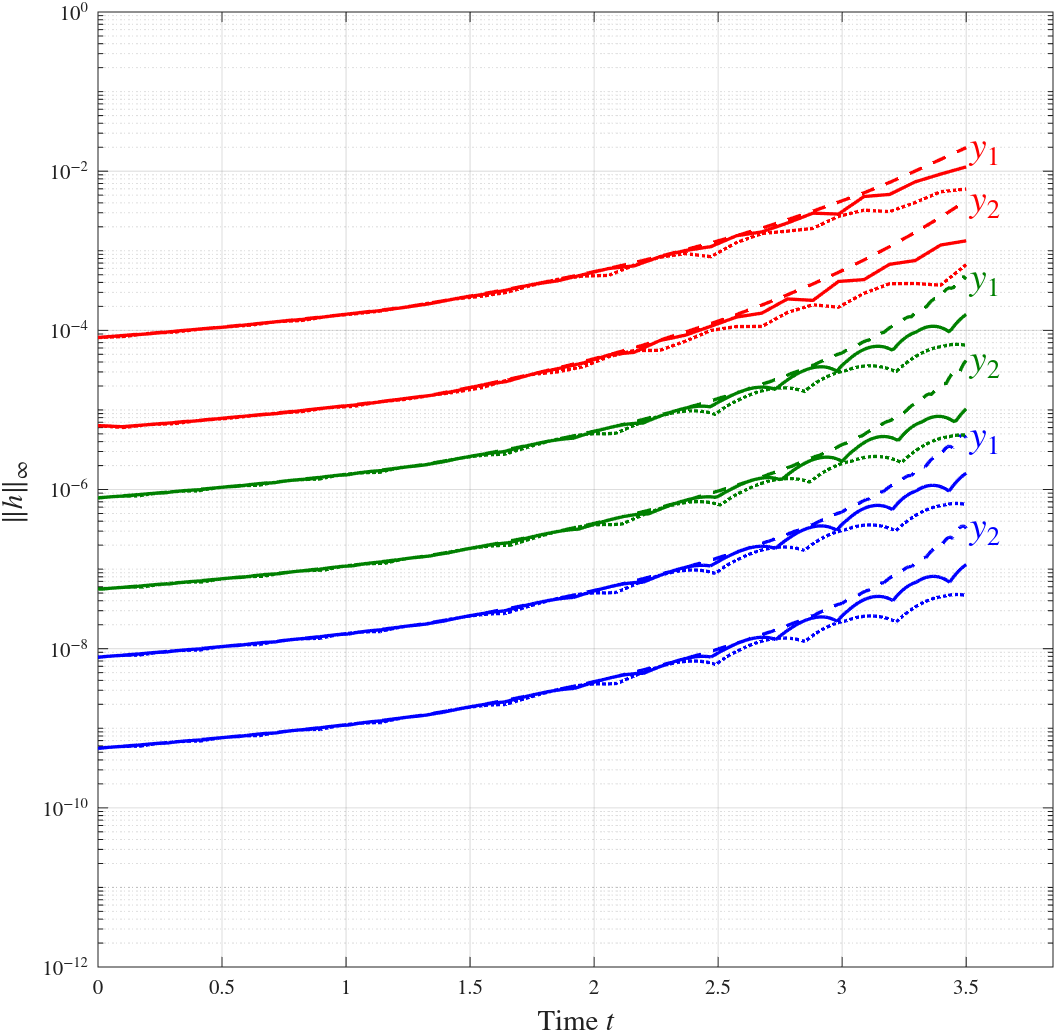}
    \end{subfigure} 
    \hfill
   \begin{subfigure}[b]{0.32\textwidth}
        \centering        \includegraphics[width=\textwidth]{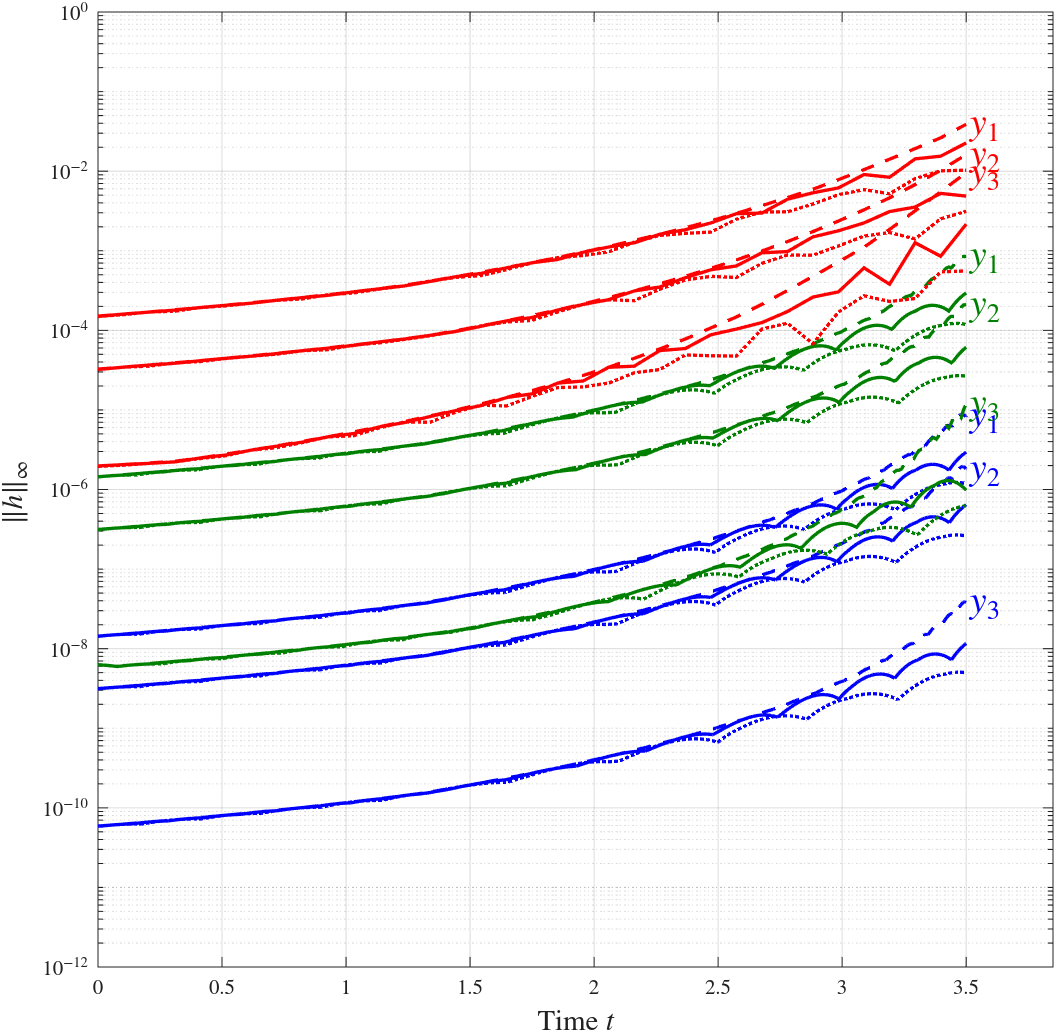}
    \end{subfigure} 
    \caption{The time evolution of $\|h(y^{i})\|_\infty$ for the
    linearized  Burgers' equation with initial condition
    $\frac{1}{2} + \frac{1}{4} \sin(x)$ evolved to time $T_f=3.5$.
    Left: mixed accuracy SDIRK2 \eqref{MPIMR}; 
    Middle: mixed accuracy  SDIRK3 \eqref{MPSDIRK3};
    Right: mixed accuracy  SDIRK4 \eqref{MPSDIRK4}.
    Red lines are $\dt = 0.1$, Green lines $\dt = 0.01$, Blue lines $\dt = 0.001$.
    The dotted lines are for $N_x = 30$, 
  solid lines are $N_x=50$, 
  dashed  lines  $N_x=250$.
    \label{fig:BurgersLinH} }
\end{figure}

Figure \eqref{fig:BurgersLinH} shows the time evolution of $\|h(y^{i})\|_\infty$ 
where
\[ h(y) = - \frac{1}{2} D_x Y y + \frac{1}{2} D_x \bar{Y} \bar{y}
+ D_x \bar{Y} \left( y - \bar{y} \right) =  -  \frac{1}{2} D_x  \left(\bar{Y} -Y
\right)^2 \ve = O(\dt^2). \]
(we use $\bar{y} = u^n$)
for the the mixed accuracy SDIRK2 \eqref{MPIMR}  (left);
SDIRK3 \eqref{MPSDIRK3} (middle);
SDIRK4 \eqref{MPSDIRK4} (right).
Red lines are $\dt = 0.1$, Green lines $\dt = 0.01$, Blue lines $\dt = 0.001$.
The dotted lines are for $N_x = 30$,  solid lines are $N_x=50$, 
dashed  lines  $N_x=250$.
We observe that the biggest impact comes from the value of $\dt$,
and that the size of the perturbation decays, as expected, by a factor of $\dt^2$.
The size of the system makes a difference as well, but it is not a significant difference. 
After a longer time-evolution we see a slight rise  in $\|h\|_\infty$
from $N_x = 30$ to $N_x=50$, and a slightly larger rise to $N_x=250$.
It is also interesting to note that as the solution is more accurate (i.e. the 
order of the time-stepping method is higher) the final stage $\|h(y^{(s)})\|_\infty$
is significantly smaller.

% {\bf Do we do this? I think not. We can delete}
% Next, we look at the evolution of the stage errors
% $z^n - y^n$ for  different values of $N_x$ and $\dt$.
% In Figure \ref{fig:BurgersLinStage} we plot the 
% largest spatial error against over time. Once again, 
% in Figure \ref{fig:BurgersLinStage} (left) we 
% we use $N_x = 50$, and  we plot the value of 
% $log_{10}(\left\|z^n-y^n\right|_\infty)$.
% over time. 
% In blue we have the values of the difference between the solutions from \eqref{IMR} and \eqref{MPIMR}, 
% in red the difference between the solutions from 
% \eqref{SDIRK3} and \eqref{MPSDIRK3}, and in 
% green the difference between the solutions from 
%  \eqref{MPSDIRK4} and its full-accuracy analog (where $f_\varepsilon = f$).
% In Figure \ref{fig:BurgersLinStage} (right) we 
% repeat this for $N_x = 250$.
% We observe {\bf  Put in what we observe.}

\begin{figure}[htb]
    \centering
    \begin{subfigure}[b]{0.32\textwidth}
        \centering        \includegraphics[width=\textwidth]{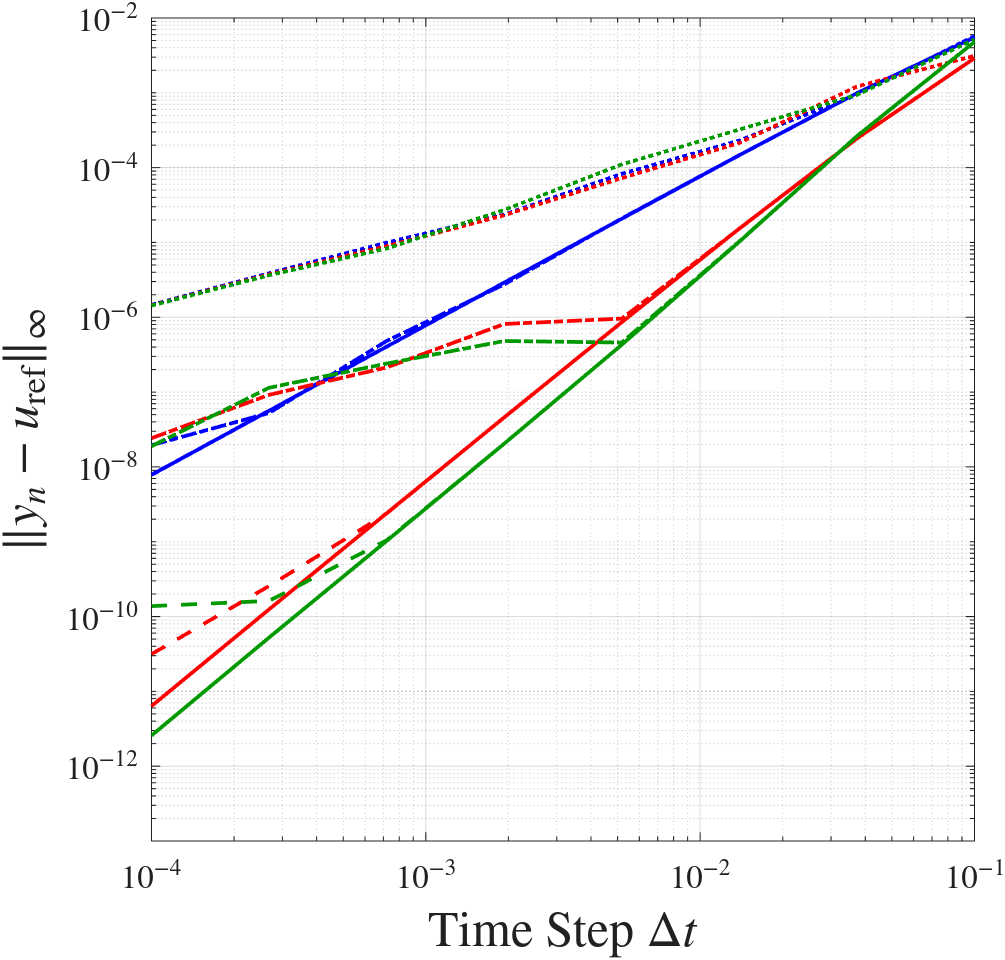}
    \end{subfigure}
    \hfill
   \begin{subfigure}[b]{0.32\textwidth}
        \centering        \includegraphics[width=\textwidth]{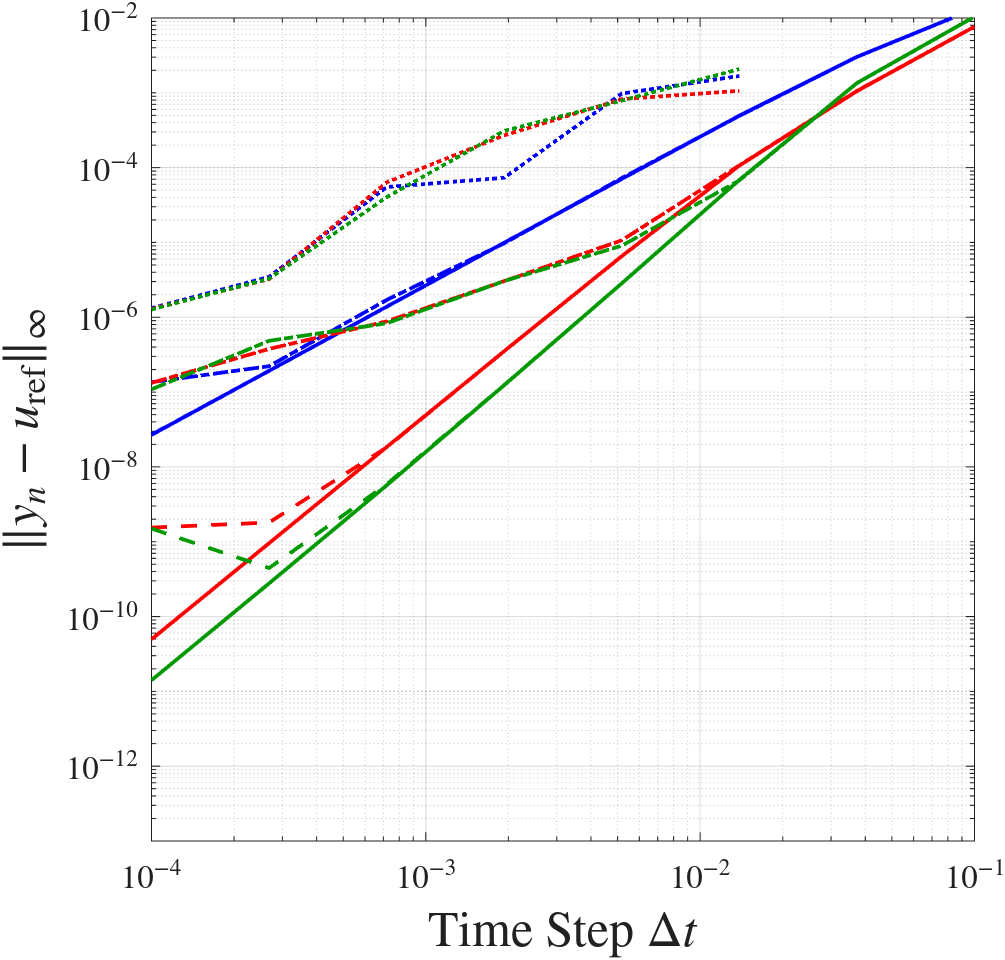}
    \end{subfigure} 
    \hfill
   \begin{subfigure}[b]{0.32\textwidth}
        \centering        \includegraphics[width=\textwidth]{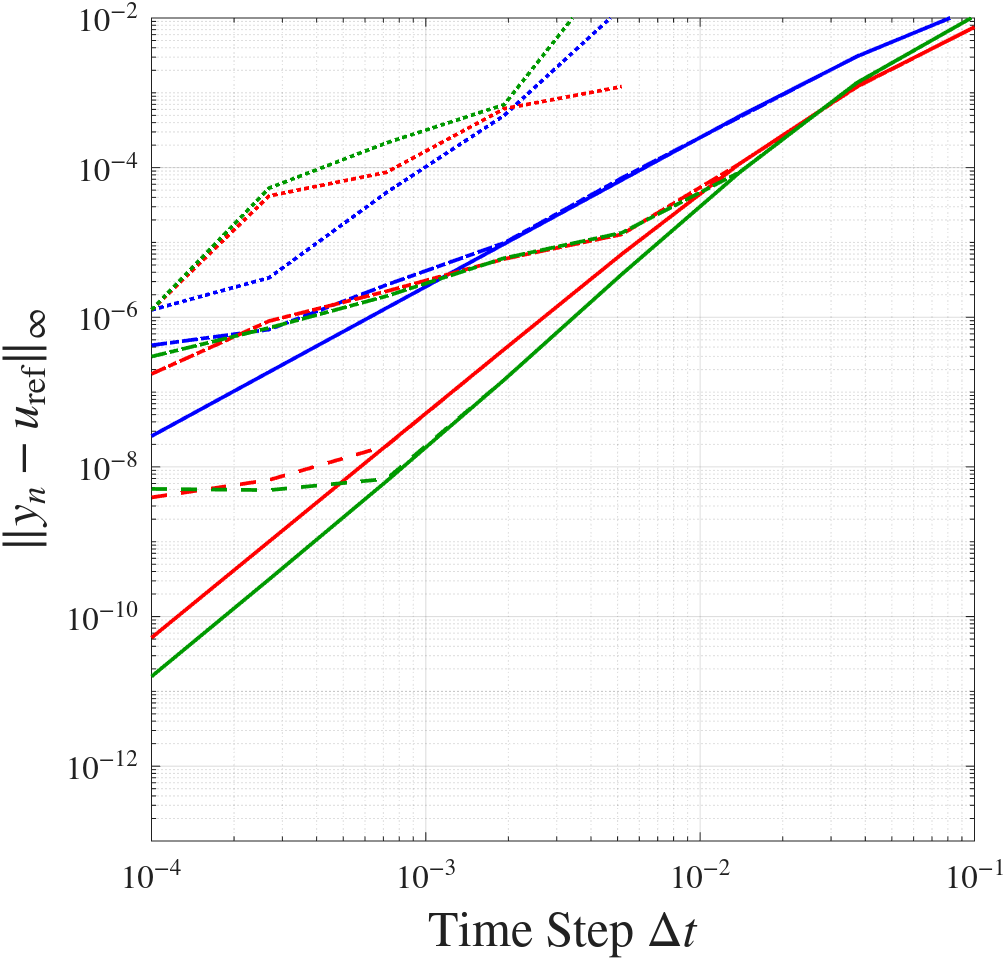}
    \end{subfigure} 
    \caption{The final time maximum norm errors of the linearized and perturbed Burgers' equation,
    compared to a reference solution. 
    In blue we have the mixed accuracy SDIRK2
    \eqref{MPIMR}, in red the mixed accuracy SDIRK3 \eqref{MPSDIRK3},
    and in green the mixed accuracy SDIRK4 \eqref{MPSDIRK4}.
    Left: $N_x = 50$;  Middle $N_x = 250$; Right $N_x = 450$.
    Solid lines are only linearized, but not perturbed;
    Dashed lines have a perturbation of $\epsilon = 10^{-6}$;
    Dash-dot lines have a perturbation of $\epsilon = 10^{-4}$;
    Dotted lines have a perturbation of $\epsilon = 10^{-2}$.
    \label{fig:BurgersLinFT} }
\end{figure}

We now turn to look at the impact of the perturbation 
on the final time errors.  In Figure \ref{fig:BurgersLinFT}
we plot $log_{10}$ of the final time errors of the methods SDIRK2
\eqref{MPIMR} (blue), SDIRK3 \eqref{MPSDIRK3} (red), and SDIRK4
\eqref{MPSDIRK4} (green) when compared to  a reference solution, 
plotted against  $\dt$ (x-axis),
for values of $N_x =50, 250, 450$ (left, middle, right). 
We see that in the absence of a perturbation, 
the linearization error is not apparent 
for the second order SDIRK2 and the third order SDIRK3 methods. This is because 
the linearization has an error of $\varepsilon = O(\dt^2)$,
and, as expected by Theorem \ref{thm:FinalConv} the final time error is expected to be 
$O(\varepsilon \dt L) = O(\dt^3) $, which is less or equal to the 
order of these two methods. However, for the fourth order SDIRK4 method
the linearization error is dominant, so we only see third order convergence.

Next, we want to understand the impact of perturbation errors in addition to the 
linearization error. We perturb the inverse matrix  $(I - a_{ii} \dt D_x \bar{Y})^{-1}$
by chopping it off after a set number of digits $d$, 
leading to  a perturbation of $\epsilon = 10^{-d}$.
In Figure \ref{fig:BurgersLinFT} we show the impact of these perturbations,
at the level of 
$\epsilon = 10^{-2}$ (dotted),
$\epsilon = 10^{-4}$ (dash-dot),
and $\epsilon = 10^{-6}$ (dashed).
We observe that for the smallest perturbation $\epsilon = 10^{-6}$
the impact is not seen for larger $\dt$, but as $\dt$ gets smaller
the convergence rate drops to first order, and eventually 
saturates at the level of $\epsilon \; \dt$.
This happens sooner and is more evident as we have more points in space, i.e. as the 
problem is stiffer and the impact of the polluted matrix multiplication increases.
As the perturbation gets larger $\epsilon = 10^{-4}$ (dash-dot)
we see clear first order convergence that starts earlier as $N_x$ is larger.
When we use a large perturbation $\epsilon = 10^{-2}$ (dotted), we have large errors that are first order for all $\dt$ when $N_x$ is smaller, and grow less stable 
(the lines abruptly end) as $N_x$ gets larger.

We see that Theorem \ref{thm:FinalConv} explains the growth of
these errors seen in this linearization and perturbation example.
In the next section, we look at a true mixed precision implementation of 
a similar Burgers' equation.

\subsection{Mixed precision implementation with an iterative solver}
\label{sec:BurgersMP}

In this section, we consider a mixed precision 
implementation of the nonlinear solver. For this case, we
use the Burgers' equation  \eqref{eq:Burgers}
 with   initial condition $u(x,0)= \sin(x)$  and periodic boundary conditions.
 We evolve the solution to final time $T_f =0.7$.

The overall motivation for this study is the use of mixed precision arithmetic to accelerate the computation. The most expensive part of the computation involves the iterative solution of the linearized system, as in Newton's method. The most expensive part of this iteration is the repeated solution of a linear operator. For Newton's method this linear operator is 
obtained from repeated Taylor series linearizations, as those performed in Section \ref{sec:BurgersLin}. In this section, we combine repeated Taylor series  linearization with a mixed precision computation of the inverse linear problem to show the impact of the combined perturbation.

Each implicit stage has the  general form:
$ y = y_{exp} + \alpha \dt f(y).$
We solve this iteratively, by making two replacements at each iteration: 
First, we replace $f(y)$ with 
$f_{lin}(y) = f(\bar{y}) +  f'(\ybar) \left( y - \bar{y} \right) $,
with the appropriate  $\ybar$  at each iterate.
Next, we solve the resulting system in mixed precision. 

\begin{framed}
\noindent{\bf Mixed precision algorithm:}
Select an initial value $y_{[0]}$, typically  
    $y_{[0]} = y_{exp}$. Now, for each iterate $k$ starting from $k=0$:
\begin{enumerate}
\item Replace $f(y_{[k]})$ with  
   $ f_{lin}(y) = f(y_{[k]}) +  f'(y_{[k]}) \left( y - y_{[k]} \right) .$
\item Plug in:
    $ y = y_{exp} + \alpha \dt \left(  f(y_{[k]}) +  
    f'(y_{[k]}) \left( y - f'(y_{[k]}) \right) \right). $
\item Compute 
    $y_{e} = y_{exp} + \alpha \dt f(\bar{y}) - \alpha \dt f'(y_{[k]}) y_{[k]}$,
    in high precision,  and cast it down to low precision $y_{e}^\epsilon$.
\item   Compute $\mathcal{J} =  \mI - \alpha \dt f'(y_{[k]}) $
    we cast it down to low precision  $\mathcal{J}^\epsilon$.
\item   Solve in low precision 
    $ \mathcal{J}^\epsilon \tilde{y}^\epsilon = y_{e}^\epsilon .$
\item  Cast  $\tilde{y}^\epsilon$ up to high precision $\tilde{y}$.
\item Plug this back in to the high precision operator to obtain the high precision iterate:
    $y_{[k+1]} = y_{e} + \alpha \dt f'(y_{[k]})\tilde{y} .$
\end{enumerate}
{\em Note that if the entire stage is performed in low precision rather than 
just the implicit solve then the error we obtain will depend on 
$\epsilon_{prec}$ rather than $\dt \epsilon_{prec}$.
The resulting error in $y_{[k+1]}$ is a combination of  $\dt \epsilon_{prec}$ and  $\dt \varepsilon_{lin}$ where $\varepsilon_{lin}$ is the error from linearization 
and $\epsilon_{prec}$ is the low precision error.}
\end{framed}

\begin{figure}[!htb]
    \centering
    % Row 1
    \begin{subfigure}[b]{0.30\textwidth}
        \centering        \includegraphics[width=\textwidth]{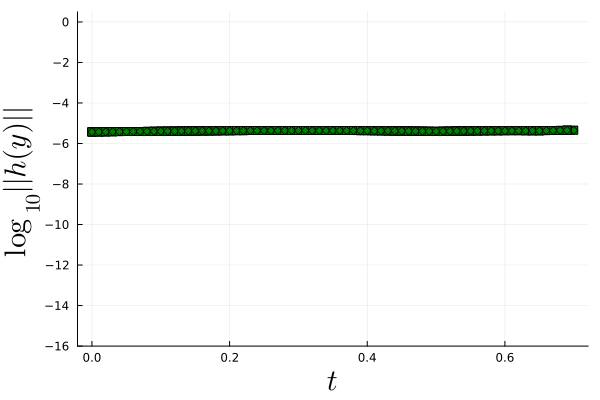}
    \end{subfigure}
    \hfill
    % Row 2
    \begin{subfigure}[b]{0.30\textwidth}
        \centering   \includegraphics[width=\textwidth]{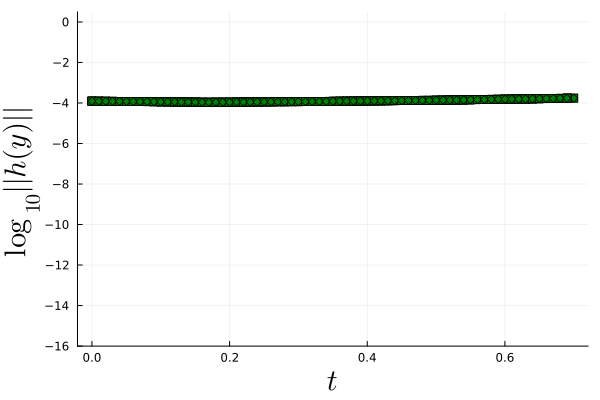}
    \end{subfigure}    \hfill
    % Row 2
    \begin{subfigure}[b]{0.30\textwidth}
        \centering   \includegraphics[width=\textwidth]{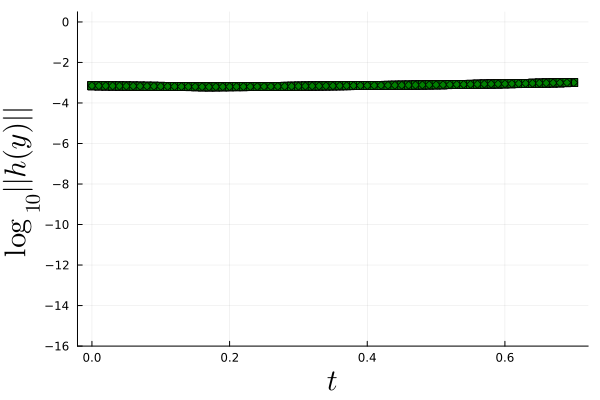}
    \end{subfigure}\\
        \begin{subfigure}[b]{0.30\textwidth}
        \centering        \includegraphics[width=\textwidth]{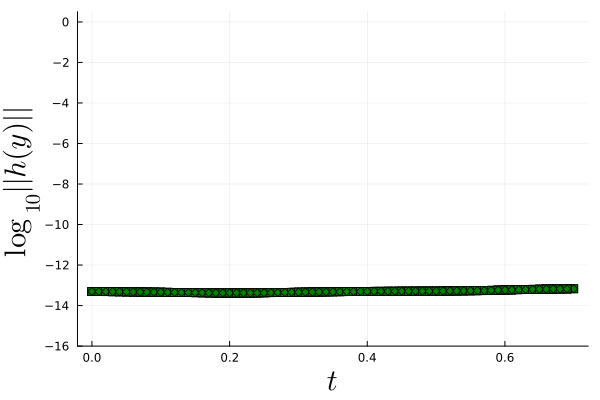}
    \end{subfigure}
    \hfill
    % Row 2
    \begin{subfigure}[b]{0.30\textwidth}
        \centering   \includegraphics[width=\textwidth]{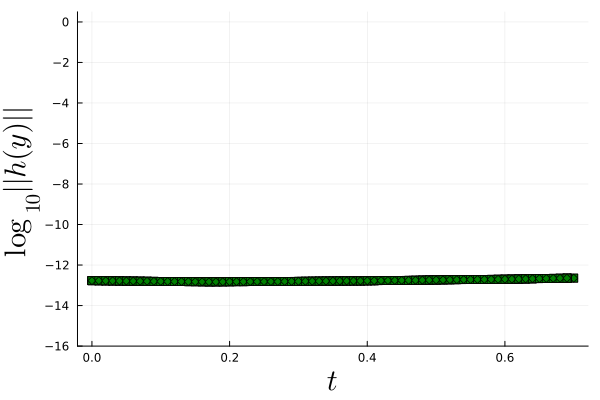}
    \end{subfigure}    \hfill
    % Row 2
\begin{subfigure}[b]{0.30\textwidth}  \centering 
 \includegraphics[width=\textwidth]{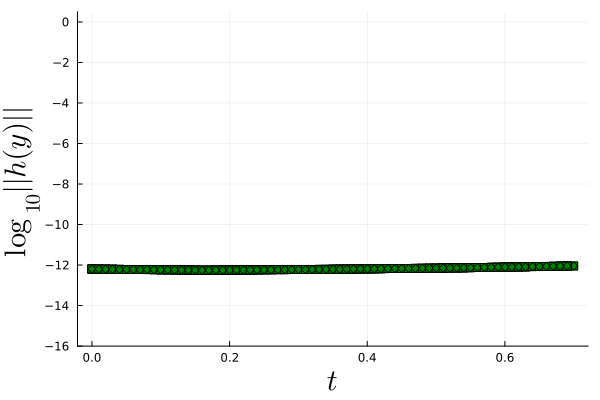}
    \end{subfigure}
    \caption{ Time evolution of $\left| h \right|_\infty$ for mixed precision 
    Burgers' equation for double/single (64/32) on top
    and quad/double (128/64) on bottom. 
    Blue: SDIRK2; red: SDIRK3; green: SDIRK4.
    Dotted: $\Delta t = 10^{-2}$; dash-dotted: $\Delta t = 10^{-3}$; dash: $\Delta t = 10^{-4}$. Left: $N_x = 50$; center: $N_x = 100$; right: $Nx = 200$. Each marker corresponds to a different stage.}\label{figmp1}
\end{figure}

Figure \ref{figmp1} shows the time evolution of 
$\left\| h(y^{(i)})\right\|_\infty$ for 
the mixed double/single (top) and quad/double (bottom)
with  $N_x = 50$ (left),  $N_x = 100$ (center), and  $N_x = 200$ (right).
The lines for the three methods SDIRK2 \eqref{MPIMR}, 
SDIRK3 \eqref{MPSDIRK3}, and SDIRK4 \eqref{MPSDIRK4} overlap. 
The perturbation most strongly depends
on the precision level, with the  double/single values near $10^{-4}$
and the quad/double values between  $10^{-14}$ and $10^{-12}$.
There is also a slight dependence on the number of points:
for double/single the value is slightly below $10^{-5}$
for the $N_x = 50$ case, which rises to above $10^{-4}$ for $N_x=200$.
 For quad/double we see the rise from near 
$10^{-14}$ to $10^{-12}$ as $N_x$ growth.
When we have a larger system, more lower precision
terms are  being multiplied in the matrix-vector operations,
causing  roundoff errors to accumulate.   
\begin{figure}[H]
    \centering
    % --- IMR Plots ---
    \begin{subfigure}[b]{0.48\textwidth}
        \centering
        \includegraphics[width=\textwidth]{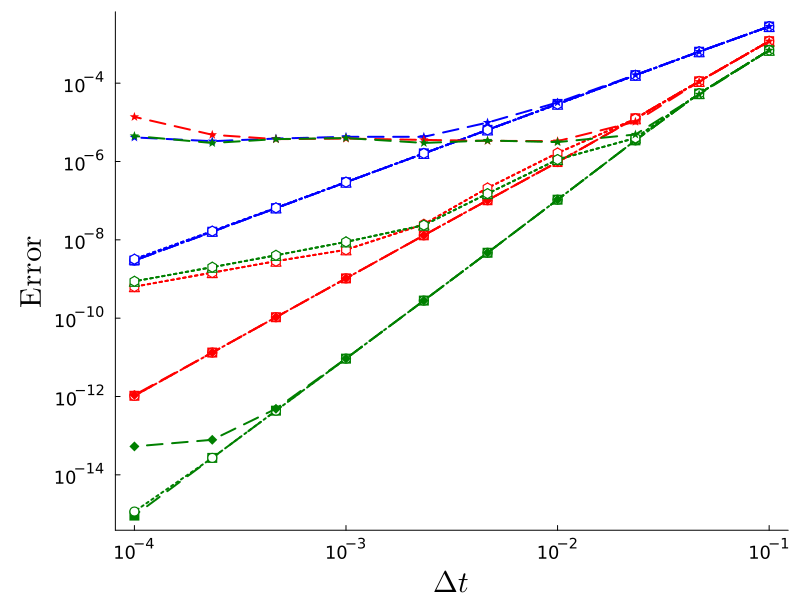}
    \end{subfigure}
    \hfill
    \begin{subfigure}[b]{0.48\textwidth}
        \centering
        \includegraphics[width=\textwidth]{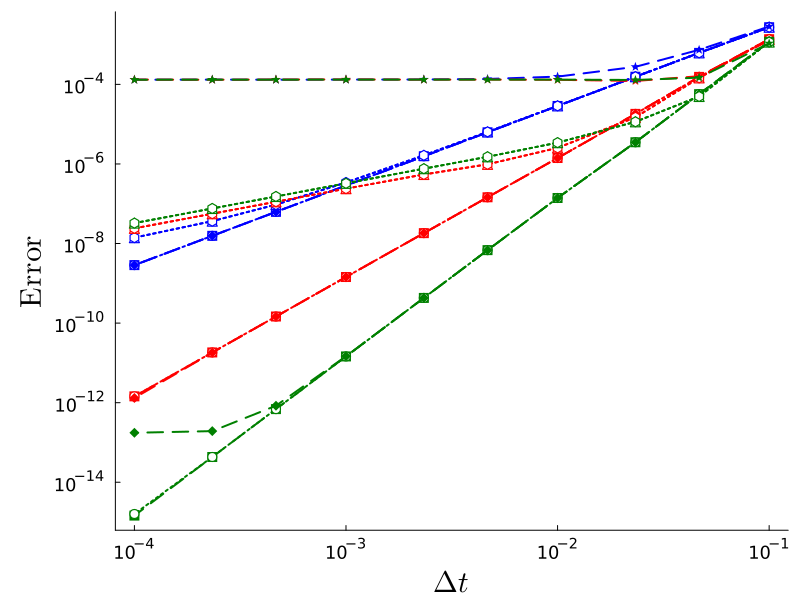}
    \end{subfigure}\\
        % --- IMR Plots ---
    \begin{subfigure}[b]{0.48\textwidth}
        \centering
        \includegraphics[width=\textwidth]{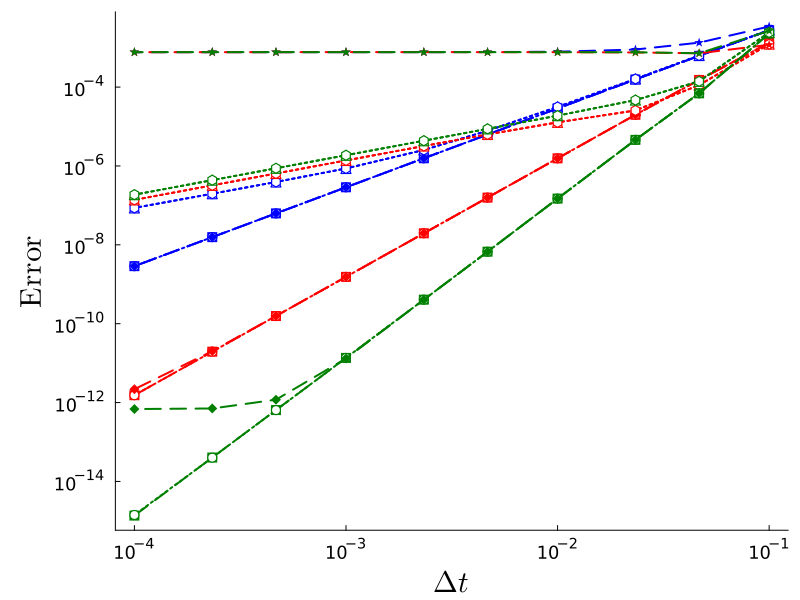}
    \end{subfigure}
    \hfill
    \begin{subfigure}[b]{0.48\textwidth}
        \centering
        % The image is now perfectly centered in the right half of the page.
        % You can change 0.8 to whatever size looks best to you.
        \includegraphics[width=0.52\textwidth]{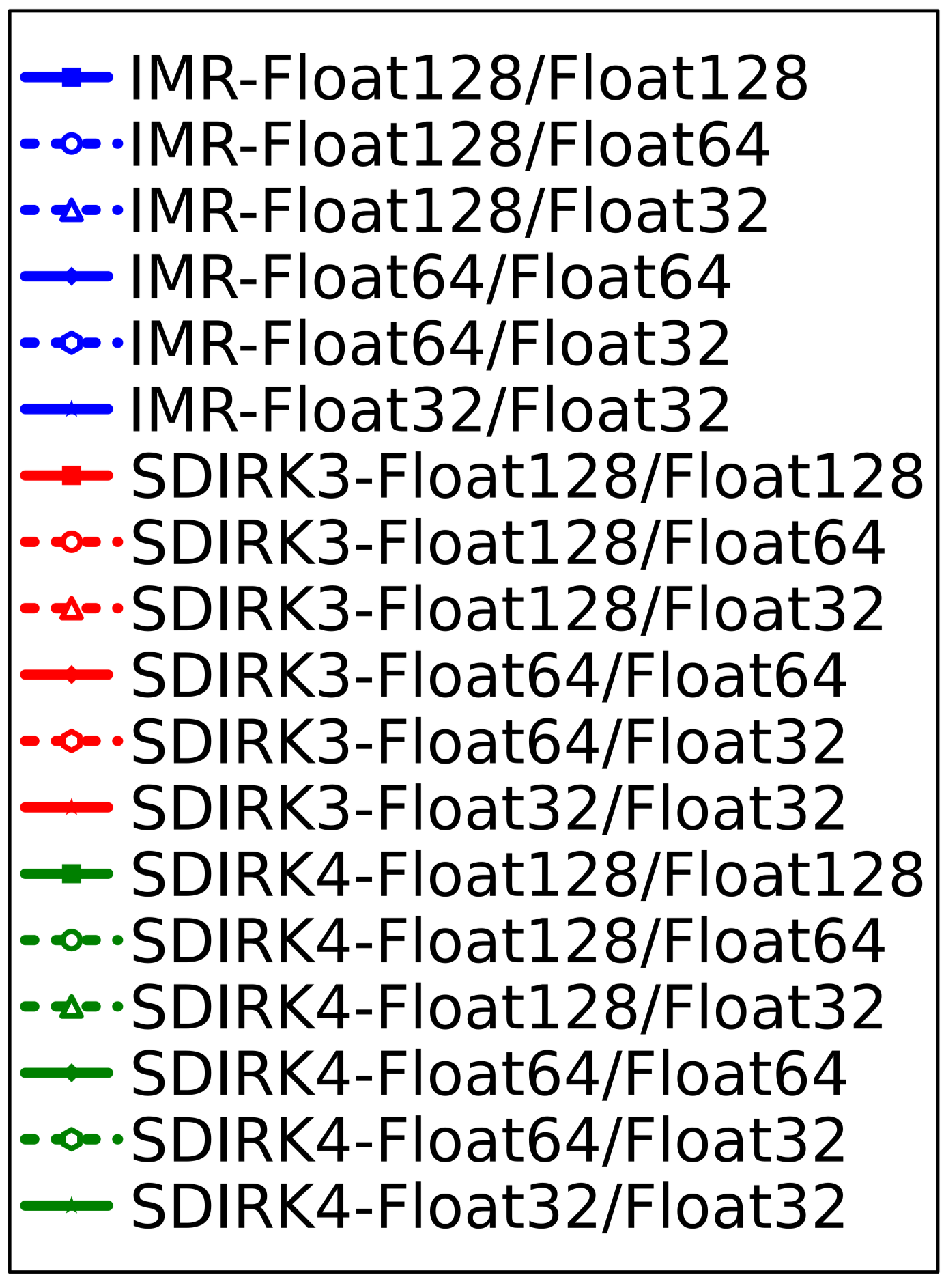}
    \end{subfigure}
    \caption{Mixed Precision Burgers' final time errors from  SDIRK2
\eqref{MPIMR} (blue), SDIRK3 \eqref{MPSDIRK3} (red), and
SDIRK4 \eqref{MPSDIRK4} (green). The errors are computed compared to 
a reference solution, and plotted at different values of $\dt$.
We use quad precision (128), double precision (64),
and single precision (32). 
    Top Left: $Nx = 50$. Top Right: $N_x = 100$. Bottom Left: $N_x = 200$.
    \label{fig:BurgersMPFT}}
    \end{figure}
In Figure \ref{fig:BurgersMPFT} we show the 
$log_{10}$ of the final time errors of the methods SDIRK2
\eqref{MPIMR} (blue), SDIRK3 \eqref{MPSDIRK3} (red), and
SDIRK4 \eqref{MPSDIRK4} (green) when compared to 
a reference solution, at different values of $\dt$.
We use quad precision (128), double precision (64),
and single precision (32). On the top left we show this for 
$N_x=50$ and on the top right for $N_x=100$, the bottom left is 
$N_x=200$.
We observe that the low precision takes over the solution and eventually 
destroys its quality. The mixed precision produces, as expected, first order
errors eventually. We note a  very strong dependence on the size of the system
both for the low precision and the mixed precision errors.
This highlights that the value of $\varepsilon$ is not the same as the machine precision value 
$\epsilon_{prec}$ but is a complex value that in this case is impacted by the precision,
the buildup of errors over each stage, and the size of the system.
This buildup of errors causes the mixed precision higher order methods to have {\em less}
accuracy than the mixed precision lower order methods for sufficiently large $N_x$ (e.g. 
$N_x = 200$).

\section{Stability and accuracy of corrections}
\label{sec:corr}

In the sections above we investigated the accuracy  and stability  of perturbed DIRK methods. 
We showed that the error resulting from replacing $f$ with $f_\epsilon$  
looks like $O(\varepsilon \dt  L T_f)$  at the final time $T_f$. 
This predictable behavior of the error, that does not grow as we increase the number of time-steps, is indication that this approach is stable.  Furthermore, the error grows only
linearly (not exponentially) with final time, which is advantageous. 
However, we note that identifying the  value of $\varepsilon$ 
is not always straightforward as it may depend on the size $N_x$ of the system,
as well as the precision of the implementation, and even the stiffness of the problem.

The first order error that enters from this perturbation will reduce the accuracy of the problem.
Furthermore, the error term $\varepsilon \dt  L$  means that a stiffer problem  (larger $L$)  
will  require a smaller time-step $\dt$ or a smaller perturbation $\varepsilon$ 
to maintain  stability.    We would like to  improve the accuracy and stability  
of the  perturbed DIRK method, without significantly adding to the computational cost.
In  this section we explore the use of  stabilized corrections to  improve the accuracy  of the  perturbed methods, without adversely impacting their stability.
 
\subsection{Stabilizing the explicit correction approach}
Explicit corrections were presented in \cite{Grant2022}, 
to improve the accuracy of the  perturbed Runge--Kutta method. The idea is to use explicit highly accurate corrections to mitigate the impact of the perturbation in the implicit step. 
For any  $p$ order implicit method \eqref{MPDIRK} we define the explicit correction method  with $p-1$ correction terms
\begin{subequations} \label{MPDIRKexpC}
\begin{align} 
\mbox{$i=1, . . ., s$:} 
& \left\{\begin{aligned} 
 y^{(i)}_{[0]} & =  y_n + 
    \dt \left( \sum_{j=1}^{i-1} a_{ij} f(y^{(j)}_{[p-1]}))
    + a_{ii} f_{\varepsilon}(y^{(i)}_{[0]}) \right) \\
 y^{(i)}_{[k]} & =  y_n + 
    \dt \left( \sum_{j=1}^{i-1} a_{ij} f(y^{(j)}_{[p-1]})
    + a_{ii} f(y^{(i)}_{[k-1]}) \right) 
    \; \; \; \; k=1, . . ., p-1\\
    \end{aligned} \right. \\
& y_{n+1}  =  y_n + \dt \sum_{i=1}^s b_{i} f(y^{(i)}_{[p-1]}) .
\end{align}
\end{subequations}
Each correction term mitigates the perturbation error 
by $\dt$, as was shown in \cite{Grant2022}
by writing the method with the corrections in an
augmented matrix form.

These inexpensive explicit computations treat $f$ more accurately.   
The implicit solves are expected to be computationally dominant, 
even when performed with lower accuracy, so we expect that  the gain in accuracy 
will be well-worth the extra few cheap explicit stages. This was verified in \cite{Burnett1, Burnett2}, where 
the accuracy and stability of explicit corrections for mixed precision were investigated numerically.
However, these explicit corrections shrink the region of linear stability and may introduce significant 
instability for larger values of  $\dt$.

% In the next  subsections we explore explicit corrections from three perpectives. These allow us to understand the resulting instability and set the stage to improve upon them using an implicit correction approach.

The explicit corrections can be analyzed for both stability and accuracy as a fixed point iteration. For any implicit stage
\[ y = y_{exp} + \alpha \dt f(y)\]
we write the  explicit corrections
\begin{eqnarray*}
  y_{[k+1]} & = & y_{exp}  + \alpha \dt f(y_{[k]}) 
\end{eqnarray*}
Replacing $y_{exp} = y - \alpha \dt f(y)$ we get
\begin{eqnarray*}
\left\| y_{[k+1]} -  y \right\|
& = & \left\| y_{exp}  + \alpha \dt f(y_{[k]}) - y \right\|\\
& = &  \left\|y - \alpha \dt f(y) + \alpha \dt f(y_{[k]}) - y\right\| \\
& = & \alpha \dt \left\| \left( f(y_{[k]}) - f(y) \right)\right\| \\
& \leq & \alpha \dt  L  \left\|y_{[k]}  - y \right\|.
\end{eqnarray*}
This process converges when $\alpha \dt L \leq 1$. 
However, if $\alpha \dt L > 1$ these corrections
may  cause instability.

This understanding of the explicit corrections points 
us to a stabilized correction approach: 
we want to add a term that will balance out the stiffness $L$ while retaining the improvement in accuracy. 
This suggests the following stabilized correction strategy:
\begin{eqnarray} \label{CorrStab1}
y_{[k+1]} & = &
\underbrace{y_{exp} 
+ \alpha \dt f(y_{[k]})}_{\mbox{explicit correction}}
+ \underbrace{\alpha \dt J  \left(y_{[k+1]} - y_{[k]} \right)}_{\mbox{stabilization}} ,
\end{eqnarray}
where the matrix $J$ will be chosen
so that the resulting iteration is stable.
Once again, we can understand this using a  fixed point analysis. We have 
\[ y_{[k+1]}  = \left( I - \alpha \dt J \right)^{-1} 
 \left( y_{exp}  + \alpha \dt f(y_{[k]}) -   \alpha \dt J y_{[k]} \right) = G(y_{[k]}) \]
so that, by the fixed point theorem,  we expect this to converge when $\| G'\| \leq 1$
\[  \left\| \alpha \dt \left( I - \alpha \dt J \right)^{-1}  \left( f'(w) - J  \right) \right\| \leq 1.\]
This is promising for several reasons. 
First, we expect that $f'(w) - J $ will not be too large if we select $J$
    close to the Jacobian. Second, the term $\left( I - \alpha \dt J \right)^{-1}$ should damp out the terms it multiplies. Finally, the entire value is multiplied by $\alpha \dt$ which will shrink it
    further.

To guide the choice of $J$, observe that
\begin{eqnarray*} 
 y_{[k+1]}  &= & \left( I - \alpha \dt J \right)^{-1} 
 \left( y_{exp}  + \alpha \dt f(y_{[k]}) -   \alpha \dt J y_{[k]} \right) \\
 &= &  \left( I - \alpha \dt J\right)^{-1} 
  \left( y_{[k]} -   \alpha \dt J y_{[k]} + y_{exp}  + \alpha \dt f(y_{[k]}) -y_{[k]} \right) \\
&= &   y_{[k]}
 + \left( I - \alpha \dt J \right)^{-1}
 \left(y_{exp}  + \alpha \dt f(y_{[k]}) -  y_{[k]}
 \right)  \\
&= & y_{[k]}
 + \left( I - \alpha \dt J\right)^{-1}
 \left(y - \alpha \dt f(y) + \alpha \dt f(y_{[k]}) -  y_{[k]}
 \right)  \\
 &= & y_{[k]}
 - \left( I - \alpha \dt J \right)^{-1}
 \left( I - \alpha \dt Q_k \right) \left(y_{[k]}-y\right) 
\end{eqnarray*}
where $Q_k (y_{[k]}-y) = f(y_{[k]})-f(y) $ so that
\[  y_{[k+1]}  - y = y_{[k]} - y 
- \left( I - \alpha \dt J \right)^{-1}
 \left( I - \alpha \dt Q_k \right) \left(y_{[k]}-y\right).
\]
Rearranging, we get:
\[  y_{[k+1]}  - y = 
 \alpha \dt  \left( I - \alpha \dt J \right)^{-1}
 \left( Q_k - J \right) \left(y_{[k]}-y\right) .\]
The key is that we want to choose $Q_k-J $ to be small,
and moreover to be made smaller by 
$\left( I - \alpha \dt J \right)^{-1}$.
% Consider that
% \[ f(y_{[k]})-f(y) = f'(\xi_k) (y_{[k]}-y) \]
% for some $\xi_k$ in the interval $(y_k,y)$,
% so that $Q_k = f'(\xi_k)$. 
So we want
\begin{eqnarray} \label{CorCondition}
    \left\| \left( I - \alpha \dt J \right)^{-1} \left( Q_k - J \right) 
\left(y_{[k]}-y\right) \right\| \leq \| y_{[k]}-y \|.
\end{eqnarray} 

Note that this condition is stricter than needed for convergence; it ensures that not only do we converge but we pick up a factor of $\dt$ at each iterate. 
In practice, the method may still converge if this 
condition is violated. However, if we can design $J$ to satisfy this condition, we expect to pick up an $O(\dt)$
at each iteration.

Many approaches may accomplish this. For example,
we can select $J = f'(\eta_k)$ where $\eta_k$ is  
some point in a small interval near $y_k$. 
% Recalling that we can also choose $Q_k = f'(\xi_k)$  
% where $\xi_k$ is some point between $y_k$ and $y$.
% In this case, we have
% \[ Q_k  - J = f'(\xi_k) - f'(\eta_k) = f''(\phi) \left(\xi_k - \eta_k \right) = O(\dt).\]  
% Ideally, we want $I - \alpha \dt J$  to be inexpensive to invert, or that an  inverse can be precomputed and possibly updated in an inexpensive way.
To make this approach efficient  we also require that the  corrections 
do not significantly increase the computational cost.  This can be accomplished,
for example, if $(I - \alpha \dt J)^{-1}$ can be precomputed or if it is  inexpensive to invert at each time-step (e.g. a ,tri-diagonal, or lower triangular matrix).
In  Sections \ref{sec:phi} and  \ref{sec:NumCor} we explore and test different strategies to select 
$\Phi = \left( I - \alpha \dt J \right)^{-1}$ 
that  stabilize the method and allow for rapid and efficient corrections.

\subsection{Analyzing the stabilized corrections as 
a time-stepping method}
We can use the theory in Section \ref{sec:stability} 
to understand the impact of corrections on the accuracy, and on the stability as well.
Consider a DIRK method with the stabilized correction approach:

\begin{align} \label{MPDIRKimpC}
\mbox{$i=1, . . ., s$:} 
& \left\{\begin{aligned} 
 y^{(i)}_{[0]} & =  y_n + 
    \dt \left( \sum_{j=1}^{i-1} a_{ij} f(y^{(j)}_{[p-1]}) \right) 
     + a_{ii} \dt f_{\varepsilon}(y^{(i)}_{[0]}) 
\nonumber \\
 y^{(i)}_{[k]} & =  y_n + 
    \dt \left( \sum_{j=1}^{i-1} a_{ij} f(y^{(j)}_{[p-1]})\right)  \\
    & + a_{ii} \dt f(y^{(i)}_{[k-1]})  
    + a_{ii} \dt J \left(y^{(i)}_{[k]} - y^{(i)}_{[k-1]} \right)
    \; \; \; \; k=1, . . ., p-1 \nonumber\\
    \end{aligned} \right. \nonumber \\
& y_{n+1}  =  y_n + \dt \sum_{i=1}^s b_{i} f(y^{(i)}_{[p-1]}) .
\end{align}
Alternatively, we can express the intermediate stages as 

\begin{eqnarray} \label{MPDIRKimpC2}
y^{(i)}_{[k]} & = & y_n + \dt  \sum_{j=1}^{i-1} a_{ij} 
f(y^{(i)}_{[p-1]}) +
\dt a_{ii} f(y^{(j)}_{[k]})   - \dt a_{ii}  h_{[k]}^{(i)}
%    y_{n+1} & = & y_n + \dt \sum_{i=1}^s b_{i} f(y^{(i)}) .
\end{eqnarray}
where, as before:
\[  h_{[0]}^{(i)}  =  f(y^{(i)}_{[0]}) -f_\varepsilon(y^{(i)}_{[0]})    = O(\varepsilon) \]
and for $k>0$
\[  h_{[k]}^{(i)}  =  f(y^{(i)}_{[k]}) - \left(  f(y^{(i)}_{[k-1]}) + J_k \left(y^{(i)}_{[k]} - y^{(i)}_{[k-1]}\right)  \right) = O(\varepsilon \dt^{k}),\]
if \eqref{CorCondition} is satisfied.

% The iterative process converges if $\dt$ is small enough compared to $f''$:
% \[ y_{k+1} - y  = (y_{k} - y) \times O(\dt^2) =O(\dt^{2(k+1)+1}) . \]
% The idea is that each implicit stage 
% is computed multiple times, and each time the value of $h_{[k]}$ 
% shrinks by an order of $\dt^2$:

To write this type of method in Butcher form, 
we stack the $s$ stages with their $p-1$ corrections 
and represent the  stage coefficients in the $(p\times s) \times (p\times s)$ matrix:
\[ \mA = \left(
\begin{array}{cccccccccc}
a_{11} & \vdots & 0 & 0 & 0 &\vdots & 0 & ... & ...& 0 \\
0 & \ddots &  0 & 0 & 0 &\vdots & 0 &... & ... & 0 \\
0 & \vdots & a_{11} &  0 & 0 &\vdots & 0 & ... & ... & 0 \\
0 & \vdots & a_{21} & a_{22} & 0 &\vdots  & 0  & ... & ... & 0 \\ 
0 & \vdots & a_{21} & 0 & a_{22} &\vdots & ... & ... & ...& ...\\ 
... & ... & ... & ... & ... &\vdots  & ... & ... & ... & ...\\
0 & \vdots & a_{s1} & ... &  ... & \vdots & a_{s,s-1}  & a_{ss} & 0 & 0 \\
0 & \vdots & a_{s1} & ... &  ... & \vdots & a_{s,s-1} & 0 & a_{ss} & 0 \\
0 & \vdots & a_{s1} & ... &  ... & \vdots & a_{s,s-1} & 0 & 0 & a_{ss} \\
\end{array}
\right)
\]
and $\vb$ is a vector of length $p \times s$,  
\[\vb = \left( \underbrace{0, ... 0 ,b_1}, \underbrace{0, ... 0 , b_2}, ...,  \underbrace{0, ... 0 , b_s} \right).\]
Note that the structure is $(p-1)$ zeros followed by a nonzero value,
so that the nonzero values correspond to each 
final corrected stage $ y^{(i)}_{[p-1]}$.

When we apply Theorem \ref{thm:FinalConv} to this case 
we have
\[ \|z_{n+1} - y_{n+1}\| \leq \|z_n - y_n\|
+ \dt^2 L \Theta +  \dt \sqrt{2 \Omega L \dt} ,\]
where, due to all the $\vh_i$ values that are zeroed out,
and the fact that only the fully corrected stage 
contributes to the error, we have:
\[  \Theta = O(\varepsilon \dt^{p-1}) \; \; \; 
\mbox{and}  \; \; \; 
 \Omega =   O(\varepsilon^2 \dt^{2 p-2}) . \]
This allows us to conclude that we will see an overall final time error
of  $O(\dt^{p})$, as long as condition \eqref{CorCondition} is satisfied.

In the following section we will numerically explore the stabilized correction approach, and identify how different choices of $J$ may affect the stability and accuracy of the solution.

\section{Defining the stabilization matrix $\Phi$}
\label{sec:phi}
In the previous section we showed that adding a stabilization 
term to the explicit correction is expected to result in improved 
stability under certain reasonable conditions on the matrix $\Phi$.
In this section we describe different 
efficient approaches to defining and computing  $\Phi$.
These will be studied numerically in the next section.

Recall that the explicit correction strategy
\[ y^e_{[k+1]}  =  y_{exp} + \alpha \dt f(y_{[k]}) 
\]
may become unstable for large enough $\dt$.
To ensure this does not occur, we can measure
the residual at $y_{[k]}$
\[ r_{[k]} = y_{exp} + \alpha \dt f(y_{[k]}) - y_{[k]} = y^e_{[k+1]} - y_{[k]},\]
and the residual at $y^e_{[k+1]}$
\[ r^e_{[k+1]} = y_{exp} + \alpha \dt f(y^e_{[k+1]}) - y^e_{[k+1]}.\]
If the explicit correction makes the residual grow 
if $\|r^e_{[k+1]}\|_\infty \geq \|r_{[k]}\|_\infty  $ 
then we need to stabilize the explicit correction.

The stabilization approach we proposed 
\eqref{CorrStab1}  is
\[ y_{[k+1]}  =  y_{exp}  + \alpha \dt f(y_{[k]}) +
\mu \dt J  \left(y_{[k+1]} - y_{[k]} \right) ,
\]
which is
\begin{eqnarray*}
y_{[k+1]} & = & y_{[k]}
+ \Phi
\big(y_{exp}  + \alpha \dt f(y_{[k]})  - y_{[k]}
\big) \\
& = & y_{[k]} + \Phi \big( y^e_{[k+1]}  - y_{[k]} \big) \\
& = & y_{[k]} + \Phi r_{[k]}
\end{eqnarray*} 
where $\Phi = \left(I - \mu \dt J\right)^{-1}$.
Note if $\mu =0$ we recover the explicit corrections.

% {\color{red} \\
% I believe this will fix both of my issues, the definition should evaluate and subtract the current guess, as well as be lower case due to R being used in broyden section.
% \[ r^e_{[k]} = y_{exp} + \alpha \dt f(y_{[k]})
% - y^e_{[k]},\]}

% \[ R^e_{[k+1]} = y_{exp} + \alpha \dt f(y^e_{[k+1]})
% - y^e_{[k+1]},\]
% and check whether the norm of the residual has 
% increased. If it has, this indicates the need
% to stabilize the explicit correction.

We can use the same $\Phi$ over the entire simulation (a static approach) or change 
$\Phi$ at each timestep, or even at each iteration (a dynamic approach).
In the next subsections we will describe different approaches for choosing $\Phi$. 

\subsection{Static stabilization}
Ideally we can compute  $\Phi$ only once at the beginning of the 
simulation. The optimal choice of $\Phi$ will of course depend on the
best choices of $J$ and $\mu$.  We want to choose a matrix $J$
that will capture the eigenvalue spectrum of the explicit corrections,
so that $\Phi$ will then damp any growth from an explicit correction.
The choice of $\mu$ is also significant. 
In this work we use the natural choice, which  is $\mu = \alpha$.
However, in general, $\mu$ could be chosen larger to provide more stability.
This must be done with caution as modifying the size of $\mu$ will impact the error constant
and may provide less accuracy. For this reason, we use the more consistent $\mu = \alpha$,
reserving the study of different $\mu$ for future work.
% and while stabiliry is the main priority we should aim to minimize the stabilizations pollution of ther accuracy.  Otherwise it could suggest extremely large $\mu$}
We note that the implementation of static corrections
in mixed precision is straightforward;
$\Phi$ is computed in high precision, and then used for 
all the corrections.

\noindent{\bf A Jacobian-based approach:}
Our first approach involves a  static $\Phi$ based on the Jacobian 
of $f$ at the initial value:
\[ \Phi = \left(\mI - \mu \dt J_0 \right)^{-1} 
\; \; \; \mbox{where} \; \;  J_0 = f'(y_0). \] 
% The value of $\mu$ can be chosen to optimize stability;
% in our case we generally use $\mu = \alpha$.

\smallskip

\noindent{\bf  An approach based on the differential operator:}
The Jacobian approach is tied to the initial value; an alternative
is to consider the dominant differential operator of $f$, 
and use it as a basis for $\Phi$.
In our case, we say
\[ y_{[k+1]} = y_{exp} + \alpha \dt f(y_{[k]}) + \dt \mu \mathcal{L} 
(y_{[k+1]} - y_{[k]}),\]
where $\mathcal{L}$ is an approximation of a spatial derivative.

The Burgers' example and the shallow water equations
both have the derivative operator applied to a function, so 
we would choose the derivative operator as $\mathcal{L}$
\[ f(u) = - D_x \big(\mathcal{F}(u) \big)  \; \;  
\implies \; \;  \mathcal{L} = - D_x \] 
(in the shallow water system this would be more properly defined as $\mathcal{L} = diag(D_x,D_x)$. 
For the nonlinear heat equation we have
\[ f(u) = D_{xx} \big(u^m \big)  \; \;  
\implies \; \;  \mathcal{L} = D_{xx} .\] 
% The appropriate choice of $\mu$ may depend on the problem 
% and the time-stepping method.
This approach is inspired by the Explicit-Implicit-Null (EIN) 
method, which consists of adding and subtracting a derivative operator that mimics the spatial dynamics, multiplied by a scaling parameter $\mu$,
and then developing an IMEX method based on this decomposition.

\subsection{Dynamic stabilizations}
If the time-step is refined, $\Phi$ would likely need
to be modified as well, as the time-step refinement is similar
to modifying $\mu$. Allowing $\Phi = \Phi_k$ may be advantageous in different cases,
at the cost of added expense. Some approaches involve using a diagonal,
tridiagonal, or triangular matrix which adds little cost but can be 
efficiently solved at each iteration.

Another approach is to update the matrix $\Phi_k =(I - \alpha \dt J_k)^{-1}$
without the  need to compute an implicit solve at every time-step,
using a Broyden-type approach \cite{broyden} to the update.
We use this approach to inexpensively compute an increment 
$\Delta \Phi_k$ such that
\[\Phi_{k}= \Phi_{k-1} + \Delta \Phi_k ,\]
where $\Phi_{k-1}$ has been previously computed.
To accomplish this, we  write the corrections in the  form
\[  y_{[k+1]}  = y_{[k]} -  \Phi_k  F(y_{[k]}) . \]

If we want $\Phi_k$ to satisfy a secant-type condition
\[ \Phi_k  \big(F(y_{[k]}) - F(y_{[k-1]}) \big) 
= y_{[k]} - y_{[k-1]},  \]
and noting that $\Phi_k = \Phi_{k-1} + \Delta \Phi_k$,
we have 
\begin{eqnarray*}
\big(\Phi_{k-1} + \Delta \Phi_k \big)
\big(F(y_{[k]}) - F(y_{[k-1]}) \big) 
= y_{[k]} - y_{[k-1]}.
\end{eqnarray*}  
Let 
\[ R_k = F(y_{[k]}) - F(y_{[k-1]}) =\left(y_{[k]} -y_{[k-1]} \right)  -
\alpha \dt \left( f(y_{[k]}) - f(y_{[k-1]}) \right) ,\]
and
\[ \Upsilon = y_{[k]} - y_{[k-1]} - \Phi_{k-1} R_k\]
and we  wish to solve
\begin{eqnarray}
    \Delta \Phi_k \; R_k = \Upsilon_k.
\end{eqnarray} 
Here, $\Phi_k$ is known, and $R_k$ and $\Upsilon_k$ are
based on pre-computed values.
This problem has infinitely many possible rank one solutions,
which can be found by using any vector $\rho$ so that 
$\rho^T R_k \ne 0$ and setting
\[ \Delta \Phi_k = \frac{1}{\rho^T R_k} \Upsilon_k \rho^T. \]

The secant-type algorithm proposed by Broyden gives
two possibilities for this vector $\rho$, commonly known as "bad Broyden's" and "good Broyden's". These names are not always indicative of their performance. 
We can select $\rho = R_k$, which is not zero
unless we have $R_k =0$, in which case we have already converged
and need not correct any further.
A second approach would be to take $\rho^T = \Upsilon_k^T \Phi_{k-1}$.
We describe these two approaches in the algorithm below:
\begin{framed}
    {\bf Broyden's Algorithm}
\begin{enumerate}
    \item At each stage of the time-step, we compute
    $y^{(i)}_{[0]}$ using the inexpensive low-accuracy
    solve of
    \[ y^{(i)}_{[0]} = y_n + \dt \sum_{j=1}^{i-1} a_{ij} f(y^{(j)})
    + \dt a_{ii} f_\varepsilon ( y^{(i)}_{[0]}) . \]
    \item We begin with the precomputed value 
    $\Phi_{0}$. \\
    {\em (Note that we will often use the initial 
    $\Phi_{k-1} = (I-\alpha \dt J_0)^{-1}$, 
    where $J_0$ is the initial time Jacobian 
    $J_0 = f'(y_0)$, or the differential operator $\mathcal{L}$
    described above. In this sense we are updating the 
    frozen Jacobian approach described above).}
    \item  Evaluate the increment $\Delta \Phi_k$ by:
    \begin{enumerate}
        \item Calculate 
        $ R_k =   F(y_{[k]}) - F(y_{[k-1]})$ 
        where $ F(y_{[k]}) = y_{[k]} - y_{exp} - \alpha \dt  f_{[k]}   . $
        \item Use this to evaluate
        $\Upsilon_k =   y_{[k]} - y_{[k-1]} -  \Phi_{k-1} R_k  .     $
        \item Now compute 
        \[\Delta \Phi_k = \frac{1}{\rho^T R_k} \Upsilon_k \rho^T,
        \; \;  \mbox{where} \; \; 
        \left\{ 
        \begin{array}{ll}
        \rho = R_k, &  \mbox{``Bad'' Broyden's}  \\
         \rho = \Upsilon_k^T \Phi_{k-1} &   \mbox{``Good'' Broyden's}
         \end{array}  \right. 
         \]    
       { \em (Note that the Broyden update can be done at each time-step, at each stage, or even at each corrections. We found the once per time-step works best of these three options).}
    \end{enumerate}
    \item Compute $\Phi_k = \Phi_{k-1} + \Delta \Phi_k$.
   \item The correction (for $k=0:p-2$) takes the form
    \[ y_{[k+1]} = y_{[k]}  -   \Phi_k  F(y_{[k]}) ,\]
    where \[ F(y_{[k]}) = y_{[k]} - y_{exp} -   \alpha \dt  f_{[k]}   . \]
\end{enumerate}
\end{framed}

\section{Numerical Results}
\label{sec:NumCor}

% \begin{verbatim}
%     linearization -- no pertubation 
 
% Burger’s 
% Nx = 50-100-150-200-250 
% dt = 1e-1 : 1e-4   8 dt’s 
% Ic: ½ + ¼sin(x) 
%  invid t_final 0.5 
%  Vis = T_final = 2,5,     mu = 0.00001 
% Record: for SDIRK2-4 
%     z_i – y_i   save big matrix time and stage for each, for all time put Nx on stages, 
%     h_i = f_i – f_i_lin    save big matrix time and stage for each, for all time put Nx on stages, 
%    Error = u_final – u_ref   only at T_final 
% Put Error on separate plot for all methods vary NX, same X and Y axis 
% Similar for non heat 
 
% linearization – Perturbation 1e-2, 1e-4,1e-8 
% Same as above 
% linearization – Perturbation 0, 1e-2, 1e-4,1e-8 – w corrections 
% Only record Error = u_final – u_ref 
% # of corrections  := 1:p  for pth order SDIRK 
% EIN - M_0 - Both with alpha [0,0.5,1,2] 
% M_0 – fixed with good and bad broyden 
% Correct once per time-step, one per stage, and once per correction per stage. 
% Same for non-linear heat. Same for newtons with chopped digits RHS. 
% Try to match colors and line patterns of Cesar. Each method has its color, each # of corrections has its own marker/line time 
 
%\end{verbatim}

\subsection{Inviscid Burgers' equation}

\subsubsection{Linearization and perturbation}
We begin with the inviscid Burgers'  \eqref{eq:Burgers}
with initial condition $u(x,0)= \frac{1}{2} + \frac{1}{4} \sin(x)$.
We are interested in the solution of this equation at time $T_f = 3.5$, which is before the shock forms.

As above, the  linearization is performed using a Taylor series:
\begin{eqnarray}
f_{\epsilon}(y) &=&  f(\bar{y}) + 
    f'(\bar{y}) \left( y - \bar{y} \right) 
    =  - \frac{1}{2} D_x \bar{y}^2
    - D_x \bar{Y} \left( y - \bar{y} \right) .
\end{eqnarray}
We typically linearize using $\bar{y} = u^n$. 
In addition, at the implicit solve we  perturb the matrix 
$(I - a_{ii} \dt D_x \bar{Y})^{-1}$
by chopping it off after a set number of digits $d$, 
leading to  a perturbation of $\epsilon_{pert} = 10^{-d}. $
This allows us to account for additional errors, such as those resulting from a 
less accurate linear solver, in addition to the linearization error.

We test the different correction strategies: 
the explicit correction,  and the stabilized correction approaches proposed  
in Section \ref{sec:phi}, with three different approaches to computing $\Phi$:
%\[ \Phi = \big( I + \mu \dt J \big)^{-1} \]
\begin{enumerate}
    \item  Static $\Phi_J = \Phi_0 = \big(I - \mu \dt J_0 \big)^{-1}$
    based on a frozen Jacobian  $J_0 = f'(y_0)$, Plotted in blue.
    \item Static  $\Phi_{EIN} = \big(I + \mu \dt D_x \big)^{-1} $, where $J = - D_x$ is based on the EIN
    approach. 
    plotted in magenta.
    \item Dynamic $\Phi_B$ using ``Bad Broyden's'' algorithm plotted in cyan. 
    We generally find the ``Bad Broyden's'' to work better than the ``Good Broyden's''. 
\end{enumerate}
We chose to let $\mu = a_{ii}$ to best match with the underlying scheme.
%however further investigation may be done to study the impact on stability of the method for various choices of $\mu$.

\begin{figure}[t]
\centering
\begin{subfigure}[b]{\textwidth}
\centering
\includegraphics[width=0.31\textwidth]{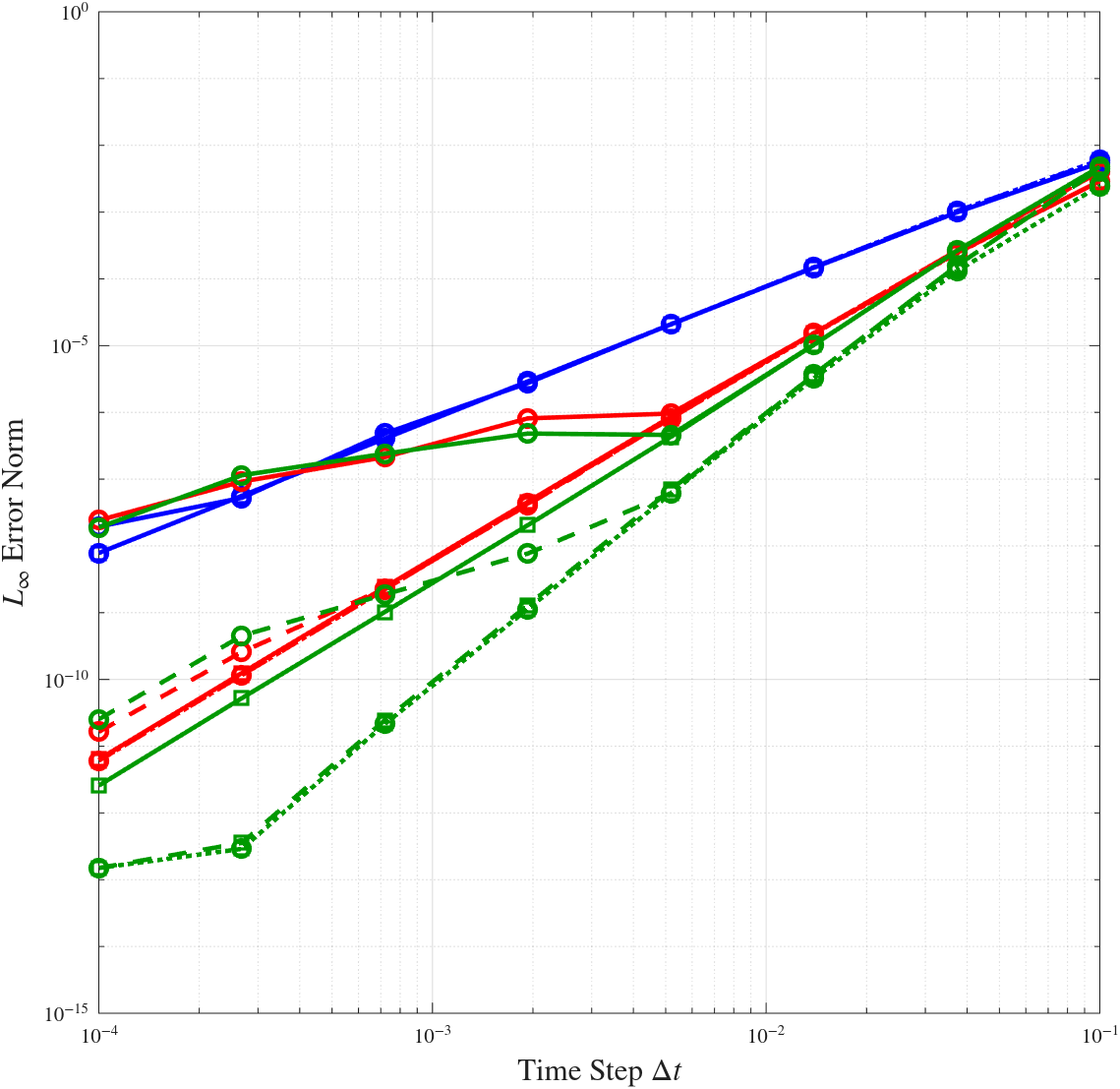}
\includegraphics[width=0.31\textwidth]{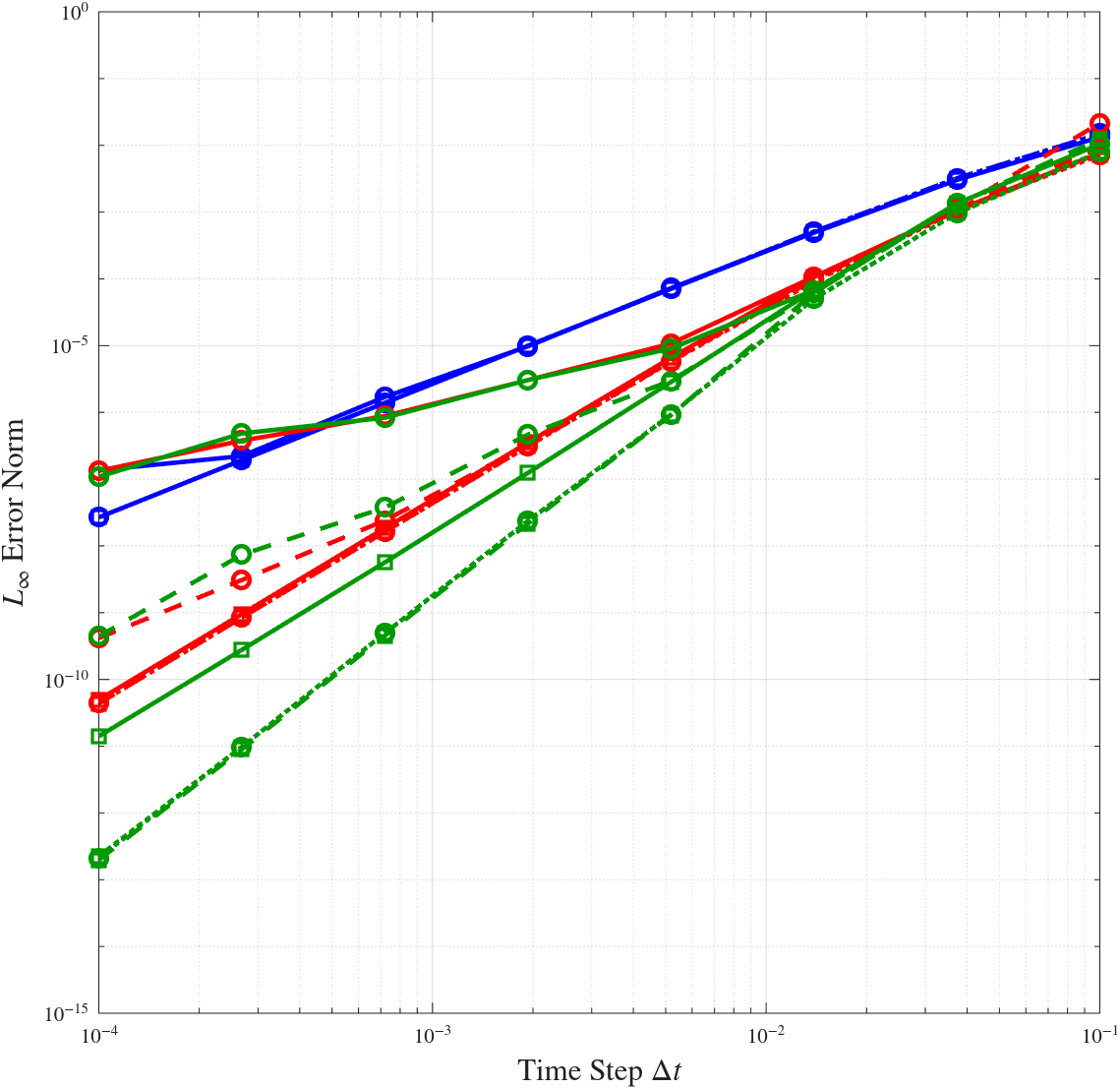}
\includegraphics[width=0.31\textwidth]{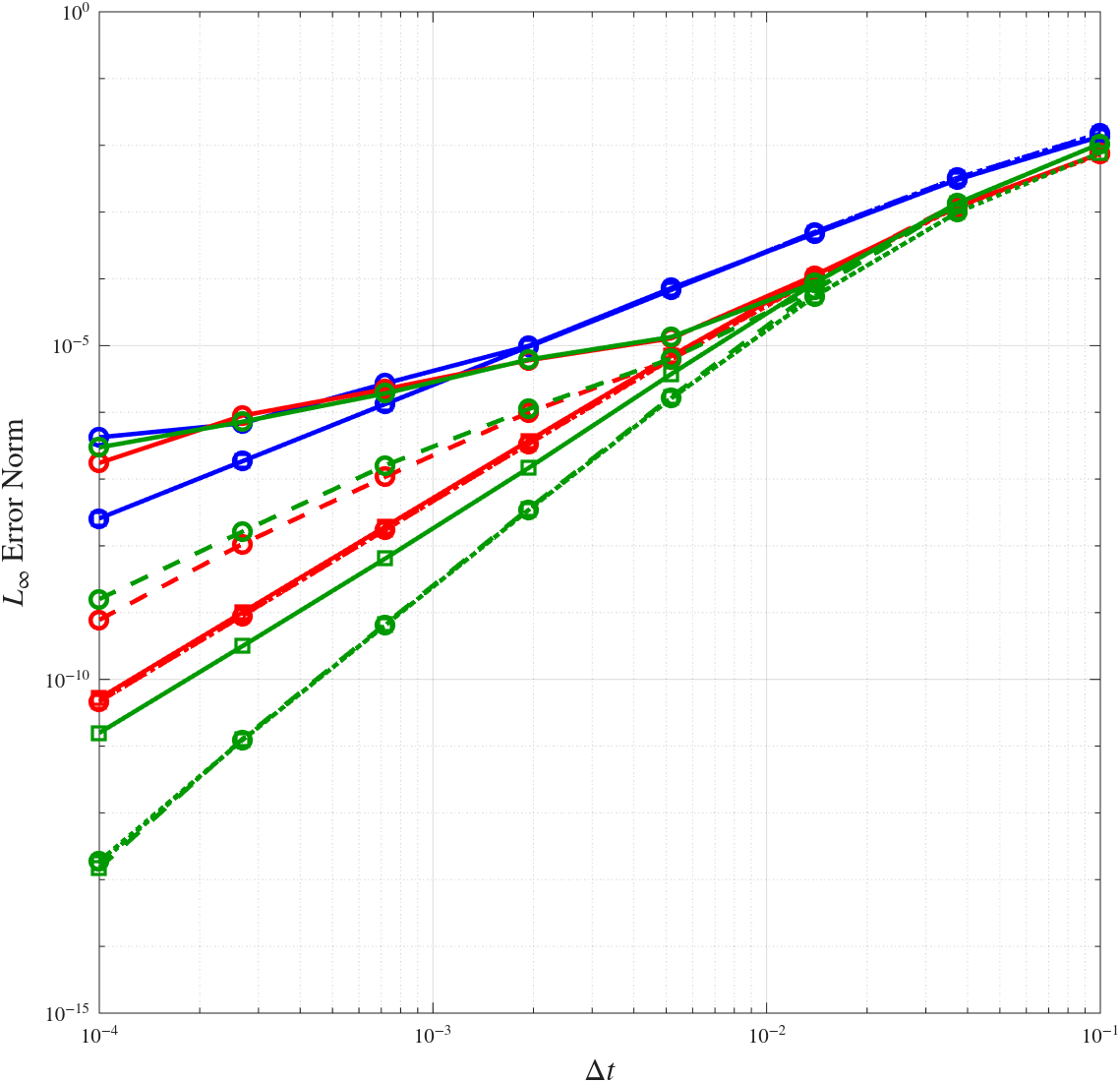}
\end{subfigure} 
\caption{Correction approaches for the time-evolution of the inviscid Burgers'
with $N_x=50$ (left), $N_x=250$ (middle), and $N_x=450$ (right).
We evolve this to final time $T_f= 3.5$ using 
    SDIRK2 in blue, SDIRK3 in red,
    and SDIRK4 in green.
    We use no corrections (solid lines),
    explicit corrections (dashed line), and 
    the static stabilized correction $\Phi_J$  (dotted line). 
    % Solid lines: no correction; dashed line: explicit correction;
    % dotted line stabilized corrections.
The method with no perturbation has square markers, the 
perturbed method with $\epsilon = 10^{-4}$ has round markers.
    \label{fig:LinBurgersCor}}
\end{figure}

In Figures \ref{fig:LinBurgersCor} we show the impact of the 
different stabilized correction strategies on the solution using 
the second order SDIRK2 (left), the third order
SDIRK3 (middle), and the fourth order SDIRK4 (right),
we use $p-1$ corrections for a $p$th order method.
We show the evolution using  $N_x=50$ points in space (left),
$N_x =250$  points (middle), and $N_x =450$  points (right). 
The method with no perturbation has square markers, the 
perturbed method with $\epsilon = 10^{-4}$ has round markers.
No corrections are solid lines, explicit corrections are dashed,
and $\Phi_J$ are dotted lines.
The corrections here are all stable, and correct the impact of the 
linearization and perturbation. However, as $N_x$ gets larger and we have a perturbation,
we find these corrections are not as impactful. Perhaps more
corrections could be beneficial for such cases.
We note that the $\Phi_{EIN}$ and $\Phi_B$  stabilized corrections  perform the same, 
so are not shown in this figure.

\subsubsection{Mixed precision implementation}
 Once again we begin with the inviscid Burgers' equation \eqref{eq:Burgers}
but  here we use the   initial condition $u(x,0)=  \sin(x)$. 
 We semi-discretize using a spectral differentiation matrix with $N_x$ points
 and step these forward to final time $T_f =0.7$
 using the mixed precision SDIRK2 \eqref{MPIMR}, SDIRK3 \eqref{MPSDIRK3}, 
 and SDIRK4 \eqref{MPSDIRK4}.

We start by computing the explicit correction
 \[ y^e_{[k+1]} =  y_{[k]}  + \alpha \dt f(y_{[k]})\]
and we stabilize with a high precision stabilization matrix $\Phi$
 \[ y_{[k+1]} =  y_{[k]}  + \Phi \left(y_{[k+1]}^e -y_{[k]} \right). \]
This high precision matrices $\Phi = \Phi_J$  and $\Phi =  \Phi_{EIN}$
are computed only once, at the initial time-step.

\begin{figure}[hbt]
     \centering
    % --- IMR Plots ---
    \begin{subfigure}[b]{\textwidth}
        \centering
\includegraphics[width=0.48\textwidth]{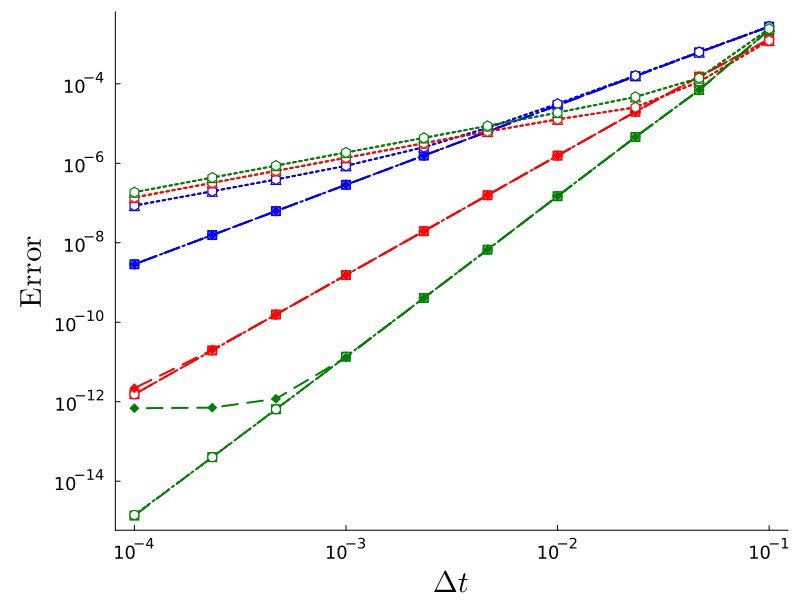}
\includegraphics[width=0.48\textwidth]{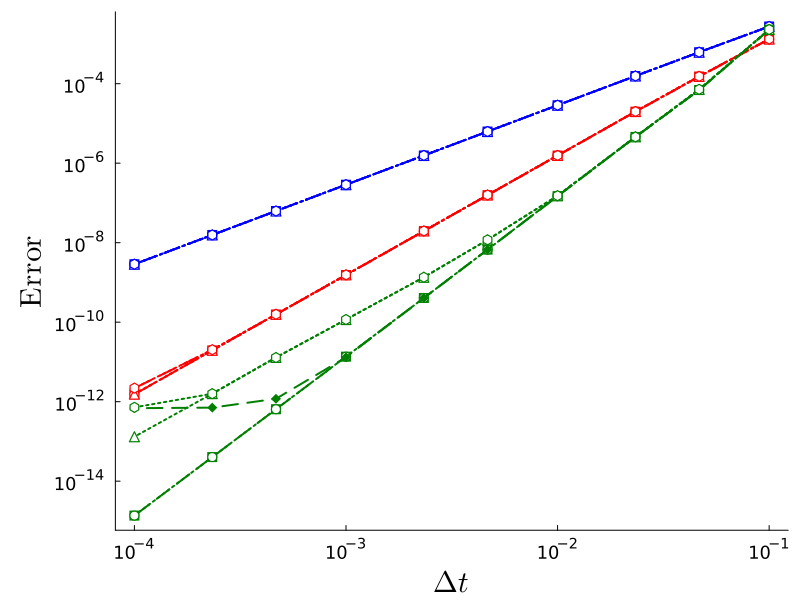}
    \end{subfigure} \\
    \begin{subfigure}[b]{\textwidth}
        \centering
\includegraphics[width=0.48\textwidth]{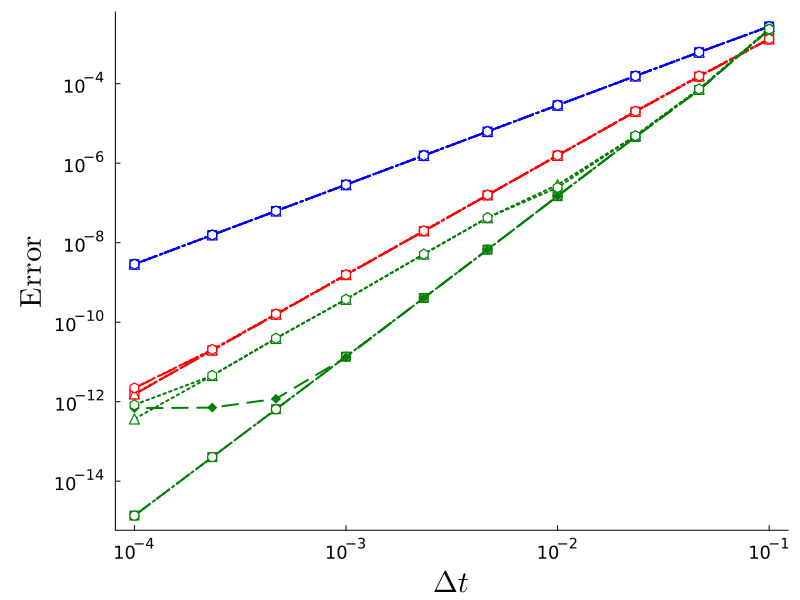}
\includegraphics[width=0.48\textwidth]{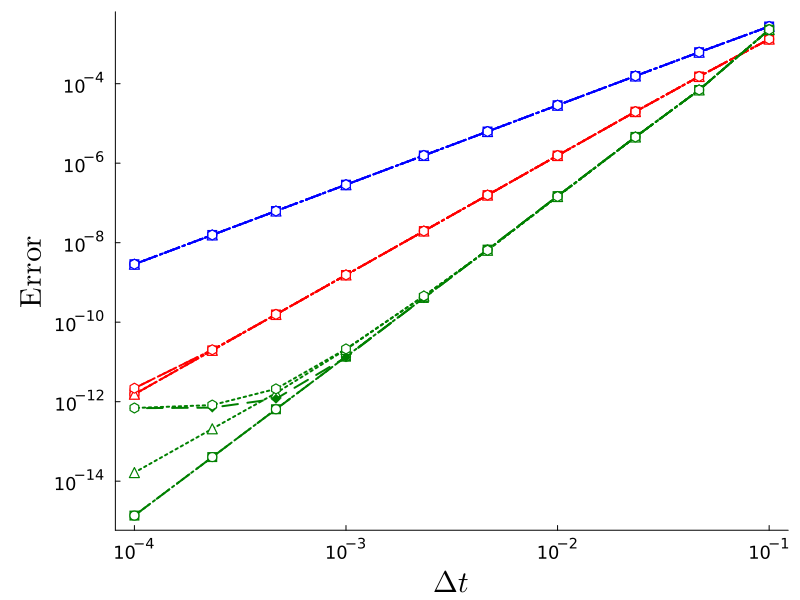}
    \end{subfigure}
    \begin{minipage}[c]{0.3\textwidth}  \hspace{0.1in}
    \includegraphics[width=0.85\textwidth]{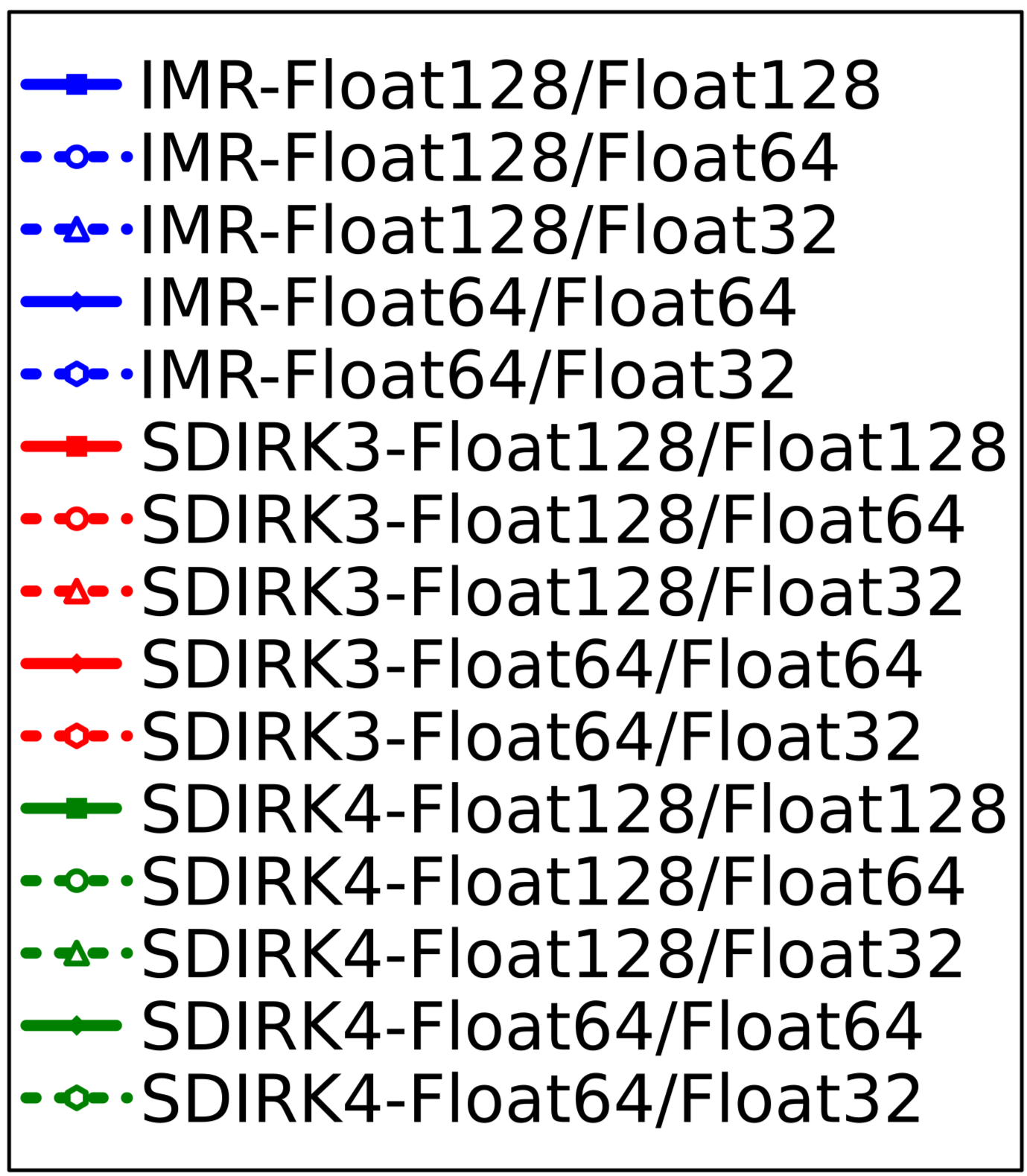}
\end{minipage} \hfill
    \begin{minipage}[c]{0.65\textwidth} 
\caption{Mixed precision Burgers' equation with
$N_x = 200$ spatial points.  
SDIRK2 \eqref{MPIMR} in blue;  
SDIRK3  \eqref{MPSDIRK3} in red; 
SDIRK4 \eqref{MPSDIRK4} in green. \\
Top Left: No corrections. 
Top Right: two explicit corrections. 
Bottom Left:  two corrections with $\Phi_{EIN}$ stabilization.  
Bottom Right: two corrections with $\Phi_J$ stabilization.
%{\bf does the legend have solid lines that are not in the figure?}
    \label{fig:BurgersMP}} 
    \end{minipage} \hfill 
\end{figure}

Figure \ref{fig:BurgersMP} shows the impact of two corrections for 
$N_x=200$, for SDIRK2 \eqref{MPIMR} (blue), SDIRK3 \eqref{MPSDIRK3} (red),
and SDIRK4 \eqref{MPSDIRK4} (green) methods.
On the top left is the mixed precision implementation without corrections. 
On the top right we see the explicit corrections;
for this case the explicit corrections do not cause the method to become unstable.
This means that the impact of the stabilization may not be evident. Indeed,
for the $\Phi_{EIN}$ stabilization (bottom left) there is no improvement over
explicit corrections, and in some cases it is even worse. For the
$\Phi_J$ based stabilization (bottom right) there is significant improvement 
in the accuracy, particularly in the fourth order method SDIRK4.

\subsection{Shallow water equations}
In this section we study the impact of linearizations and mixed precision,
with and without corrections, on a system of equations.  
Consider the shallow water equations:
\begin{align} \label{sw1}
\eta_t + (\eta u)_x = 0, \; \; \; \;  & \; \;  \; \; 
(\eta u)_t + \left( \eta u^2 + \frac{1}{2}\eta^2 \right)_x = 0,
\end{align}
for $x \in (0, 2\pi)$, with initial conditions
$\eta(x, 0) = 0.1\times \sin(x) +1$, $u(x, 0) = 0$, and periodic boundary conditions. Here $\eta(x, t)$ denotes the height and $u(x, t)$ the velocity.  
Let $\mu = \eta u$ be the mass flux, then \eqref{sw1} can be written as 
\[ \eta_t + \mu_x = 0, \; \; \; \;
\mu_t  + \left( \frac{\mu^2}{h} + \frac{1}{2}\eta^2 \right)_x = 0. \]
Once again we semi-discretize this system of equations using a Fourier 
spectral method differentiation matrix $D_x$, and the function $f(y)$ is given by 
\[ y' = \begin{pmatrix} y_{\eta}' \\
                y'_{\mu} \end{pmatrix} = f(y) = -\begin{pmatrix}
                    D_xy_\mu\\D_x\left[\frac{y_\mu^2}{y_\eta} + \frac{1}{2}y_\eta^2\right]
                \end{pmatrix}. \]

\subsubsection{Linearizations}
We linearize using a Taylor expansion  around
$\bar{y} = y_n$:
\begin{eqnarray} \label{linSW3}
f_\varepsilon(y) &= & f(\bar{y}) + f'(\bar{y})(y - \bar{y}) = -\begin{pmatrix}
                    D_x\bar{y}_\mu\\D_x\left[\frac{\bar{y}_\mu^2}{\bar{y}_\eta} + \frac{1}{2}\bar{y}_\eta^2\right]
                \end{pmatrix} + f'(\bar{y})\begin{pmatrix}
                    y_\eta - \bar{y}_\eta\\ y_\mu - \bar{y_\mu}
                \end{pmatrix},
       \end{eqnarray}
     where  
     \[  \bar{Y_\eta} = \text{diag}(\bar{y_\eta}) \quad \mbox{and} \quad \bar{Y}_{\mu/\eta} = \text{diag}(\bar{y}_\mu/\bar{y_\eta}). \] 
and 
     \begin{equation*}
         f'(\bar{y}) = \begin{pmatrix}
             {\bf 0} & -D_x\\D_x \left[\left(\bar{Y}_{\mu/\eta} \right)^2  - \bar{Y_\eta}  \right] &-2D_x \bar{Y}_{\mu/\eta}
         \end{pmatrix}.
     \end{equation*}

\begin{figure}[b!]
     \centering
    \begin{subfigure}[b]{\textwidth}
        \centering
        \includegraphics[width=0.49\textwidth]{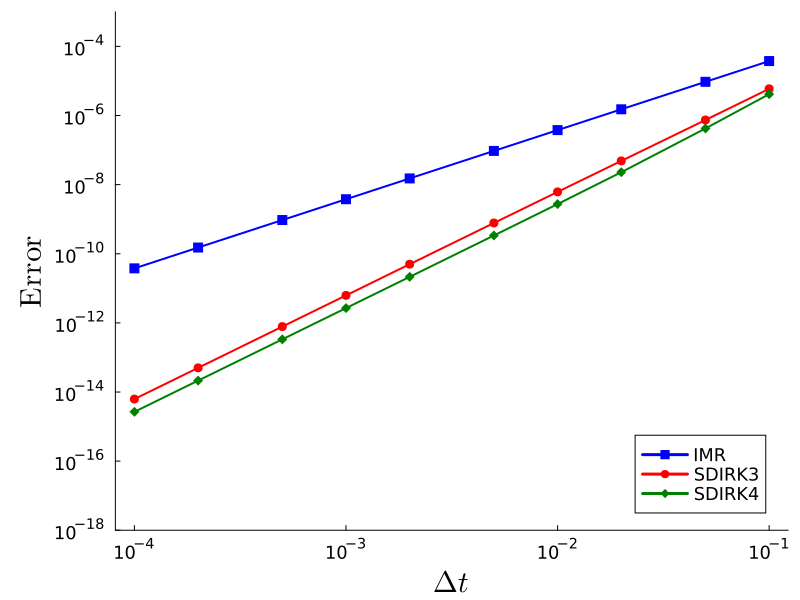}
        \includegraphics[width=0.49\textwidth]{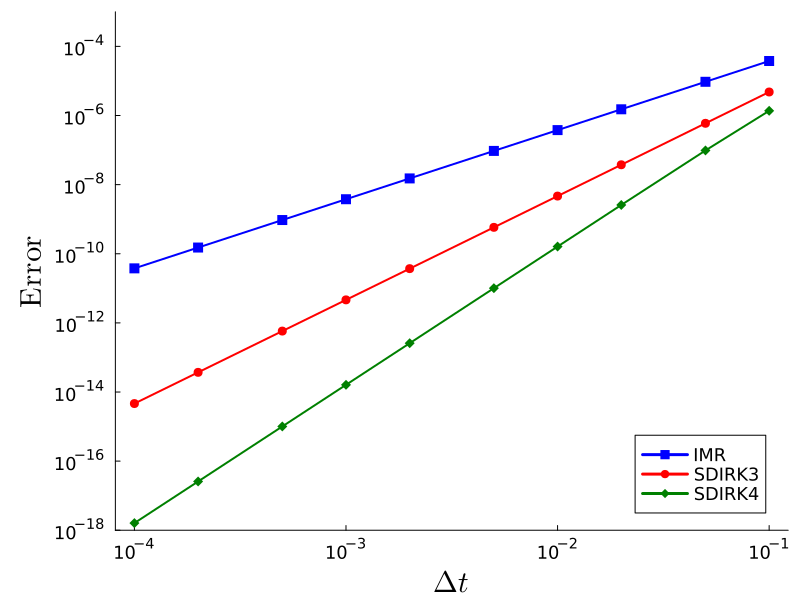}
    \end{subfigure} \\
    % \begin{subfigure}[b]{\textwidth}
    %     \centering
    %     \includegraphics[width=0.49\textwidth]{Plots/Linearization/SW/Figure_AllMethods_Quad_Nx100_2_corr_2.png}
    %     \includegraphics[width=0.49\textwidth]{Plots/Linearization/SW/Figure_AllMethods_Quad_Nx100_2_corr_1.png}
    %\end{subfigure}
\caption{Shallow water equations  with a Taylor series linearization for $N_x = 100$.\\
Top Left: No corrections. 
Top Right: two explicit corrections.  
% Bottom Left:  two EIN-based stabilized corrections.  
% Bottom Right: two frozen Jacobian based stabilized corrections.
    \label{fig:SWlinFT}} 
\end{figure}

In Figure \eqref{fig:SWlinFT} we show the impact of the different corrections on
the linearized shallow water equations. On the top left we have no corrections.
On the top right we see the impact of two explicit corrections. We note that these are stable 
for all tested values of $\dt$. The $\Phi_{EIN}$ and $\Phi_J$ 
stabilized corrections perform similarly,  and they all correct the accuracy 
of the methods to the design order.
This is not shown in the figure, as the three graphs look identical.

\subsubsection{ Mixed precision implementation}
Using the mixed precision algorithm described above
we evolve this to final time $T_f = 0.5$ using the mixed precision SDIRK2 \eqref{MPIMR},
SDIRK3 \eqref{MPSDIRK3}, and SDIRK4 \eqref{MPSDIRK4}.
The precisions we use are quad mixed with double and single, and double mixed with single.
In Figure \eqref{fig:SWMPFT} we look at the final time { maximum norm} errors resulting from 
a mixed precision implementation of the shallow water equations with $N_x = 100$ spatial points
(top left), when compared to a reference solution.
We compare these to the  two explicit corrections (top right),
\begin{figure}[t!]
     \centering
    \begin{subfigure}[b]{\textwidth}
        \centering
        \includegraphics[width=0.49\textwidth]{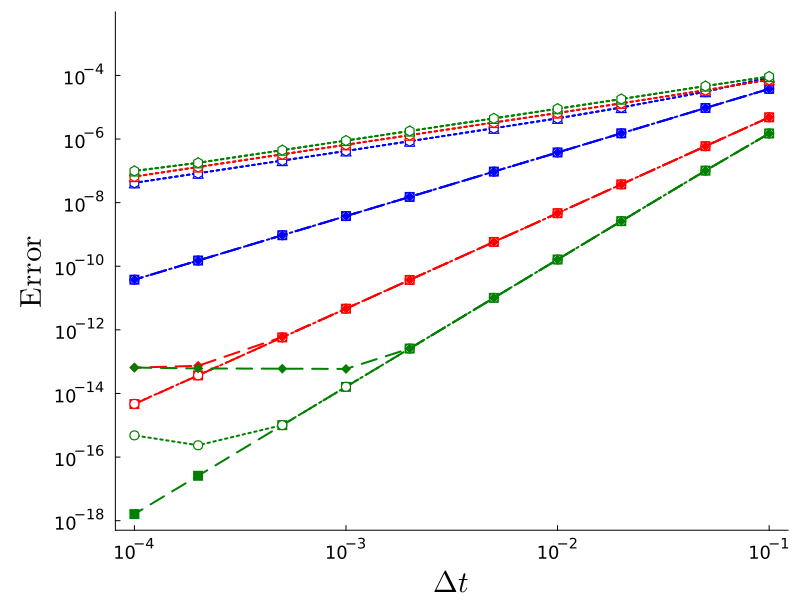}
        \includegraphics[width=0.49\textwidth]{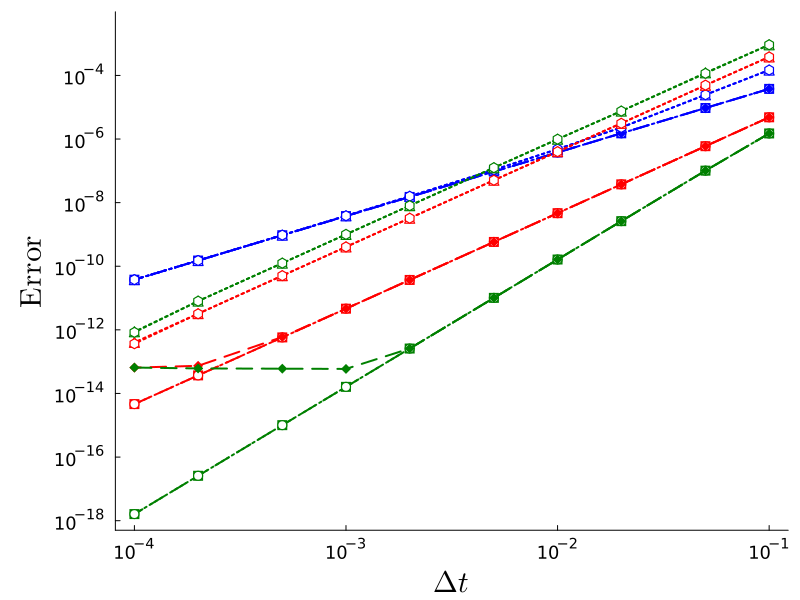}
    \end{subfigure} \\
    \begin{subfigure}[b]{\textwidth}
        \centering
        \includegraphics[width=0.49\textwidth]{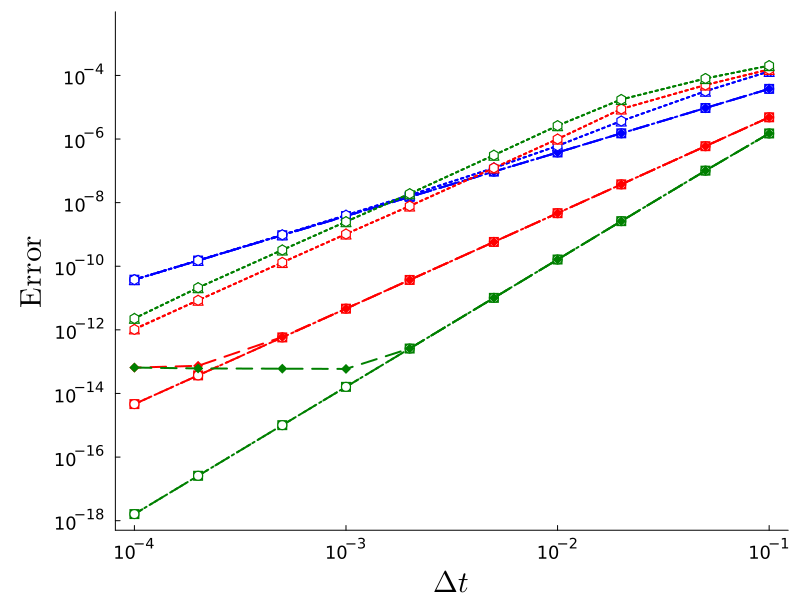}
        \includegraphics[width=0.49\textwidth]{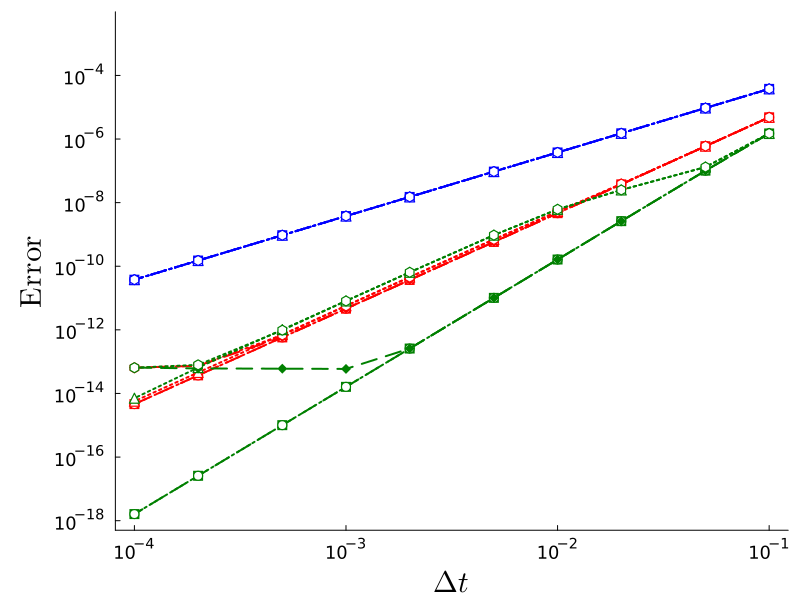}
    \end{subfigure}
    \begin{minipage}[c]{0.3\textwidth}  \hspace{0.1in}
    \includegraphics[width=0.85\textwidth]{Plots/MP_Burgers/Figure_AllMethods_Quad_Legend.png}
\end{minipage} \hfill
    \begin{minipage}[c]{0.65\textwidth} 
\caption{Mixed precision shallow water equations \\
$N_x = 100$ spatial points. \\
SDIRK2 \eqref{MPIMR} in blue;  
SDIRK3 \eqref{MPSDIRK3} in red; 
SDIRK4 \eqref{MPSDIRK4} in green. \\
Top Left: No corrections. \\
Top Right: Two explicit corrections. \\
Bottom Left:  Two corrections with  $\Phi_{EIN}$ stabilization.  \\
Bottom Right: Two corrections with  $\Phi_J$ stabilization.
%{\bf does the legend have solid lines that are not in the figure?}
    \label{fig:SWMPFT}} 
    \end{minipage} \hfill 
\end{figure}
two $\Phi_{EIN}$ stabilized corrections (bottom left),
and two $\Phi_J$ stabilized corrections  (bottom right).
The explicit corrections are stable for all the values of $\dt$ tested, and they do a very
good job improving on the accuracy of the mixed precision method. The $\Phi_{EIN}$ 
stabilized corrections perform similarly to the explicit corrections. 
The bottom right figure shows that the $\Phi_J$  stabilized corrections
outperform the other corrections in terms of the improvements in accuracy.
In these cases we observed that the $\Phi_J$ stabilized corrections  
are not only stabilizing, but provides accuracy advantages as well. 
This becomes more evident in the next example.

\subsection{Porous medium problem}

Our final example is the nonlinear equation 
 \begin{eqnarray}  \label{eq:PM}
 u_t  = (u^3)_{xx},
 \end{eqnarray}
 on the domain $x = (-\pi,\pi)$, 
 with initial condition $u(x,0)= \frac{1}{2} \cos(x)+  \frac{1}{2} $ 
 and periodic boundary conditions. Once again we use a spectral differentiation matrix
 for the spatial discretization, and evolve the resulting ODE system
 using the three mixed accuracy time-stepping methods
SDIRK2 \eqref{MPIMR}, SDIRK3 \eqref{MPSDIRK3}, and SDIRK4 \eqref{MPSDIRK4}
to a final time $T_f = 0.5$.

\begin{figure}[b!]
\centering
\begin{subfigure}[b]{\textwidth}
\centering
\includegraphics[width=0.48\textwidth]{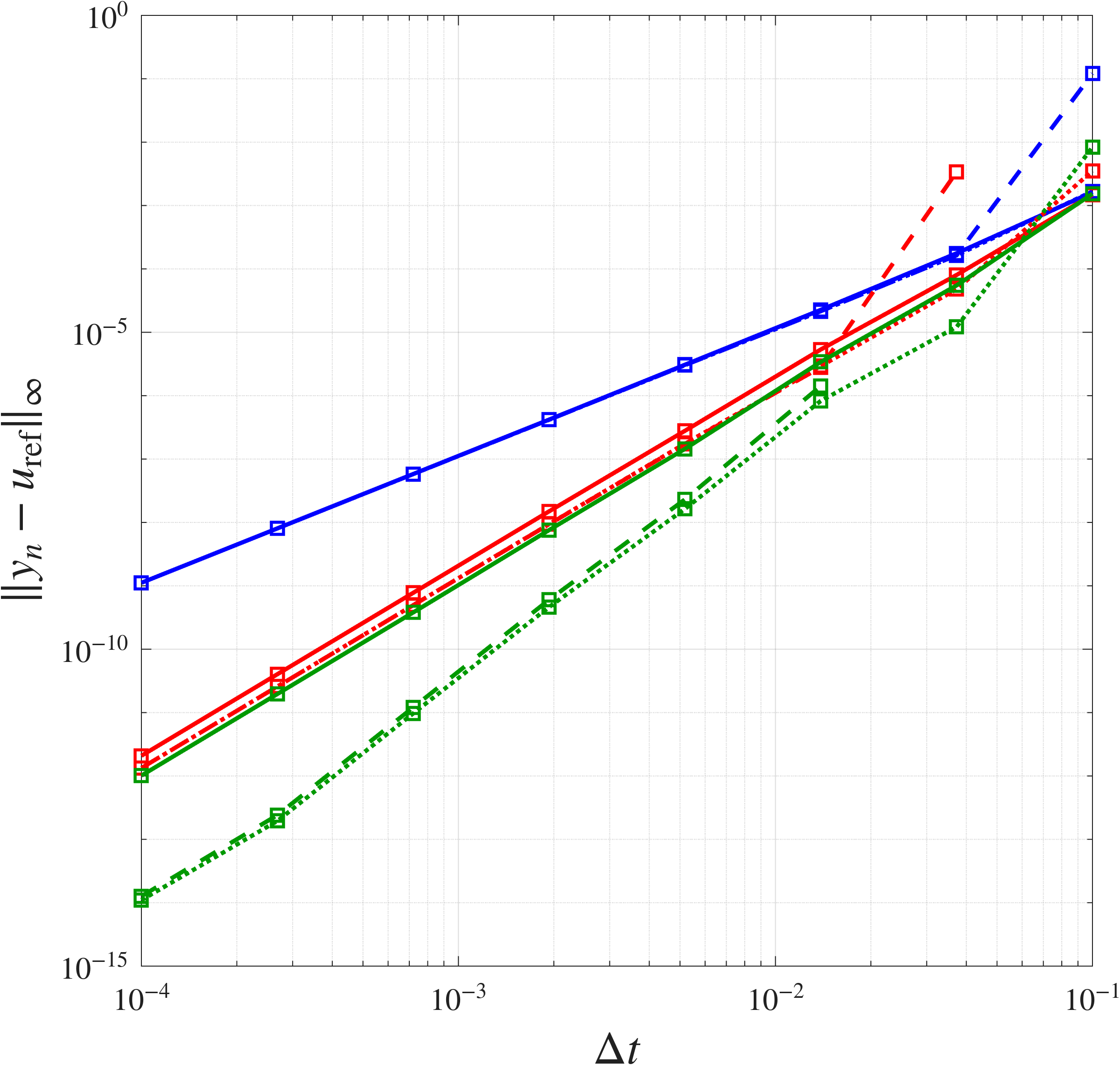}
\includegraphics[width=0.48\textwidth]{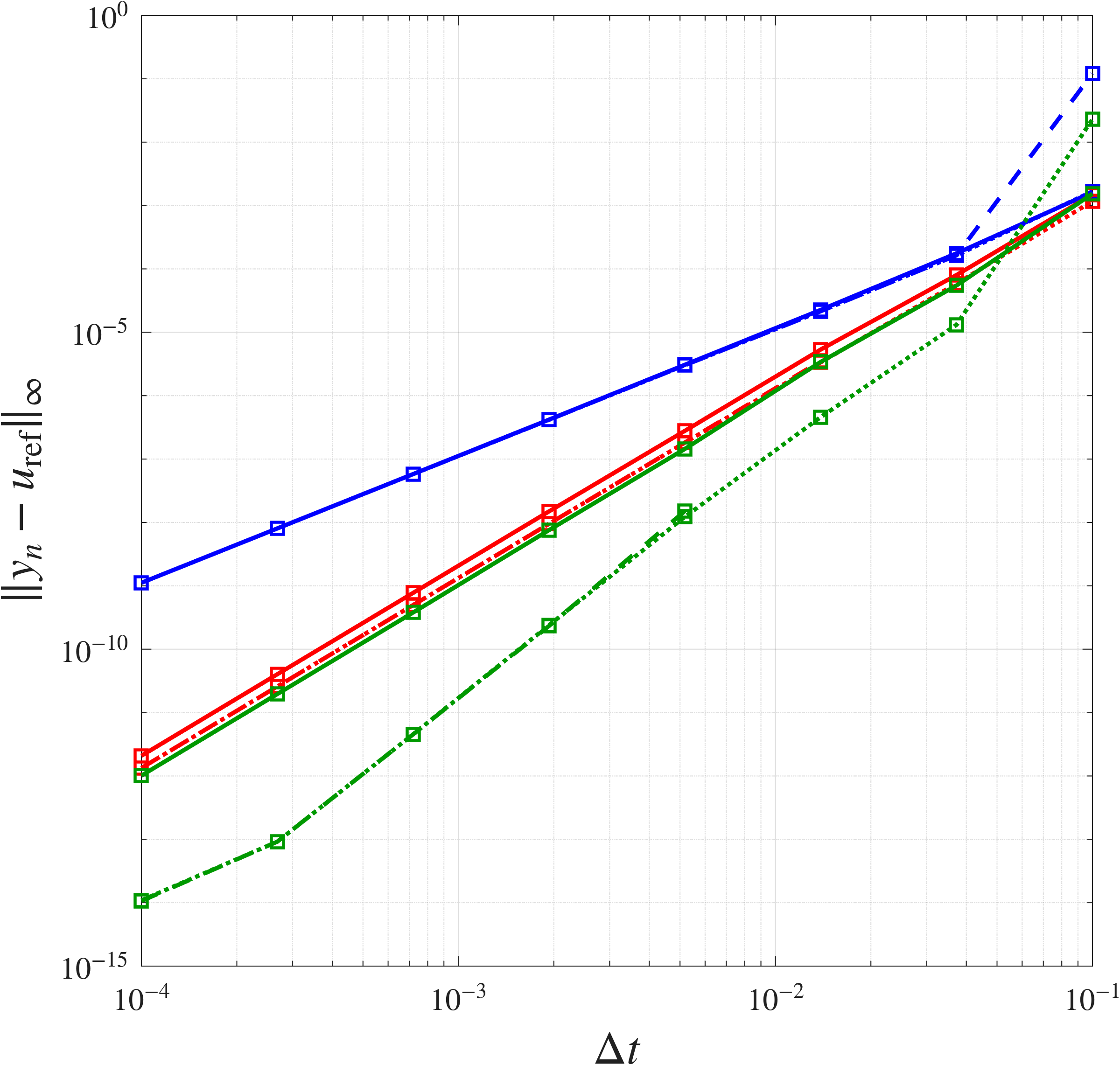}
\end{subfigure} \\
\begin{subfigure}[b]{\textwidth}
\centering
\includegraphics[width=0.48\textwidth]{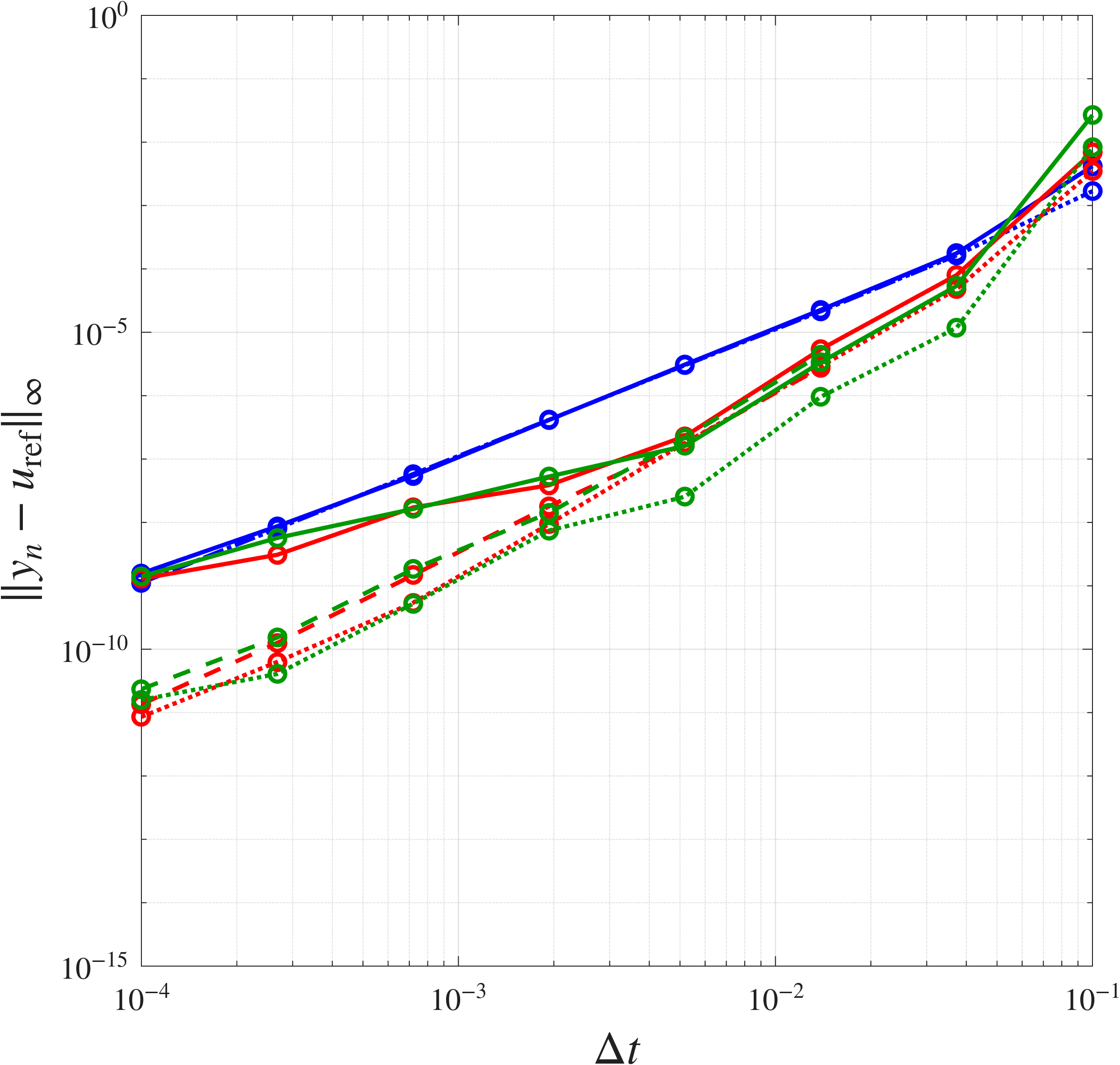}
\includegraphics[width=0.48\textwidth]{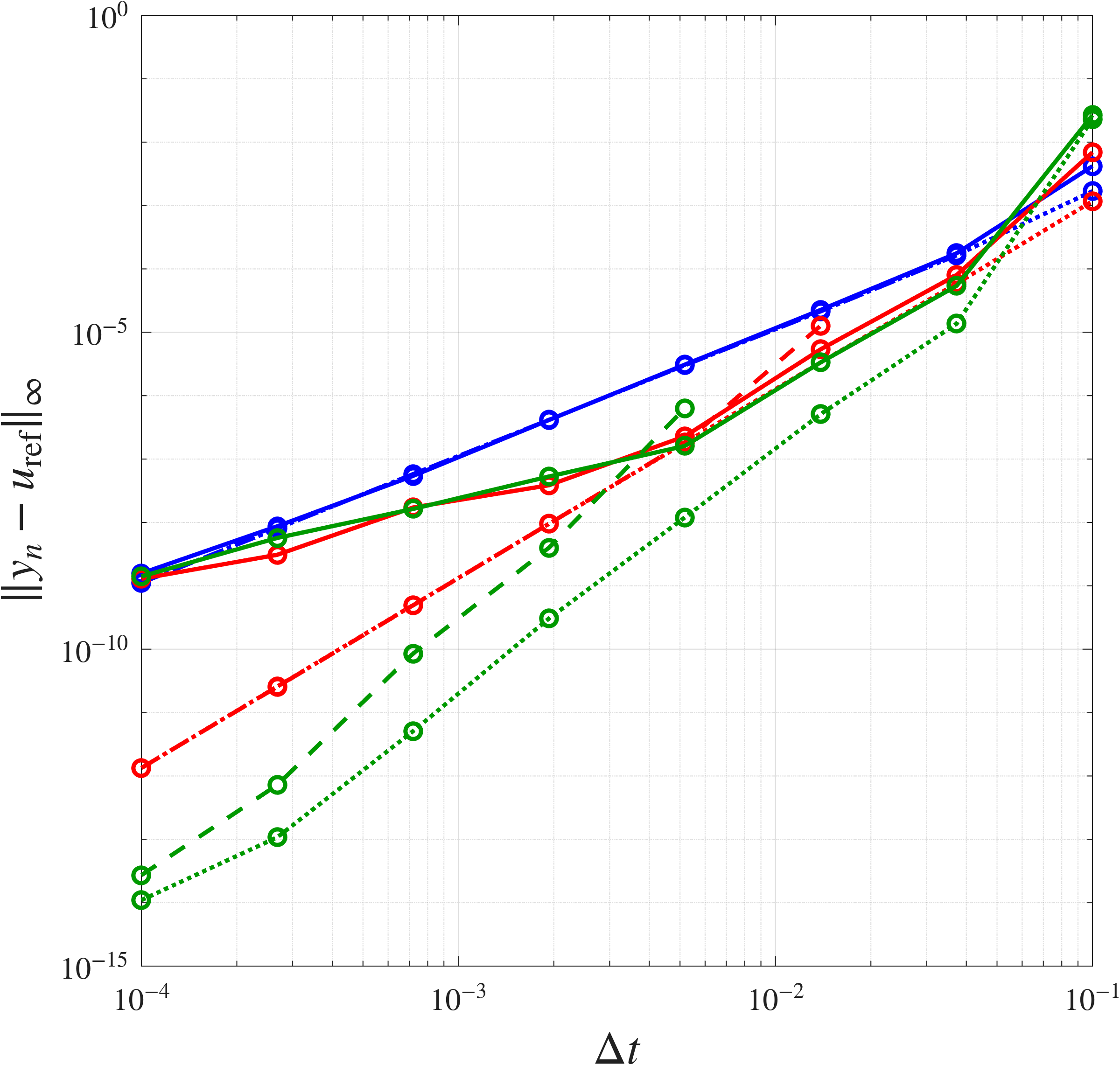}
\end{subfigure} \\
\caption{Linearized porous medium 
equations with $N_x =32$ points.
Blue, red and green are for second order SDIRK2, third order SDIRK3 
and fourth order SDIRK4 respectively.
No corrections (solid lines),
explicit correction (dashed line),
static Jacobian-based stabilized correction (dotted lines).
Left: one correction;  Right: $p-1$ corrections.
Top: no perturbations. Bottom: perturbation of $\epsilon = 10^{-4}$.
\label{fig:PMcorNx32}}
\end{figure}

A  Taylor series linearization is:
\begin{eqnarray} \label{eq:PMlinearize}
f_{\epsilon}(y) &=&  f(\bar{y}) + 
    f'(\bar{y}) \left( y - \bar{y} \right) 
    =   D_{xx} \bar{y}^3 +  3 D_{xx} \bar{Y}^2 \left( y - \bar{y} \right)  ,
\end{eqnarray}
where we linearize around  $\bar{y} = u^n$.
We  can additionally perturb the matrix for the implicit solve 
\begin{eqnarray} \label{eq:PMperturb}
(I - 3 a_{ii} \dt D_{xx} \bar{Y}^2)^{-1} + pert_{\epsilon}
\end{eqnarray}
where $pert_{\epsilon}$ represents truncating
each element of the matrix after a set number of digits $d$
leading to  a perturbation of $\epsilon = 10^{-d}$.

We tested the different correction strategies. The  static frozen Jacobian  
$\Phi_J = \big(I - 3 \mu \dt  D_{xx} \bar{Y_0}^2 \big)^{-1}$,
outperformed the static EIN stabilization $\Phi_{EIN} = \big(I - \mu \dt D_{xx} \big)^{-1} $
approach, as well as the dynamic $\Phi_B$ using ``Bad Broyden's'' algorithm. 
Once again, we let $\mu = a_{ii}$ in these simulations, where  $\mu =0$  
recovers the  explicit corrections.  The figures below show the uncorrected, 
explicit corrections, and the  static $\Phi_J$ stabilized corrections.

In Figure \ref{fig:PMcorNx32} we show the impact of corrections on 
the linearized porous medium equations with $Nx=32$
with no perturbations (top),
and with a perturbation of $\epsilon = 10^{-4}$ (bottom).
We compute the errors compared to a reference solution. Here, $\dt$
is refined but $N_x$ is constant.
In  blue, red and green are for the mixed accuracy SDIRK2, SDIRK3, and SDIRK4,
respectively. We see that in the absence of perturbations (top) and
without corrections (solid lines), the SDIRK2 is second order, 
and both the SDIRK3 and  SDIRK4 have third order errors. 
In the presence of a perturbation of four decimal places (bottom),
SDIRK2 is still second order, but the accuracy of the SDIRK3 and  SDIRK4 
degrades as $\dt $  gets smaller.

\begin{figure}[b!]
\centering
\begin{subfigure}[b]{\textwidth}
\centering
\includegraphics[width=0.48\textwidth]{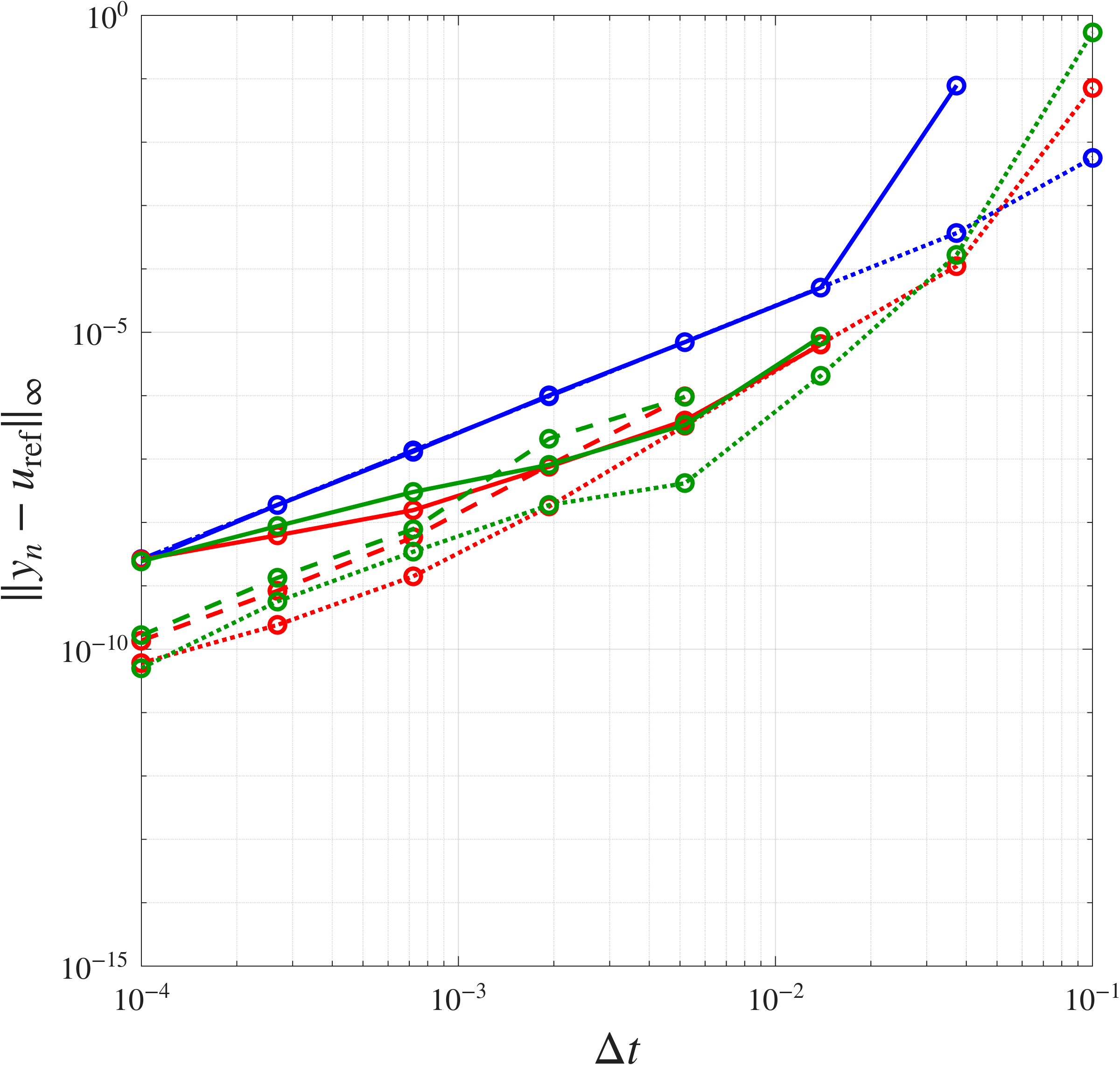}
\includegraphics[width=0.48\textwidth]{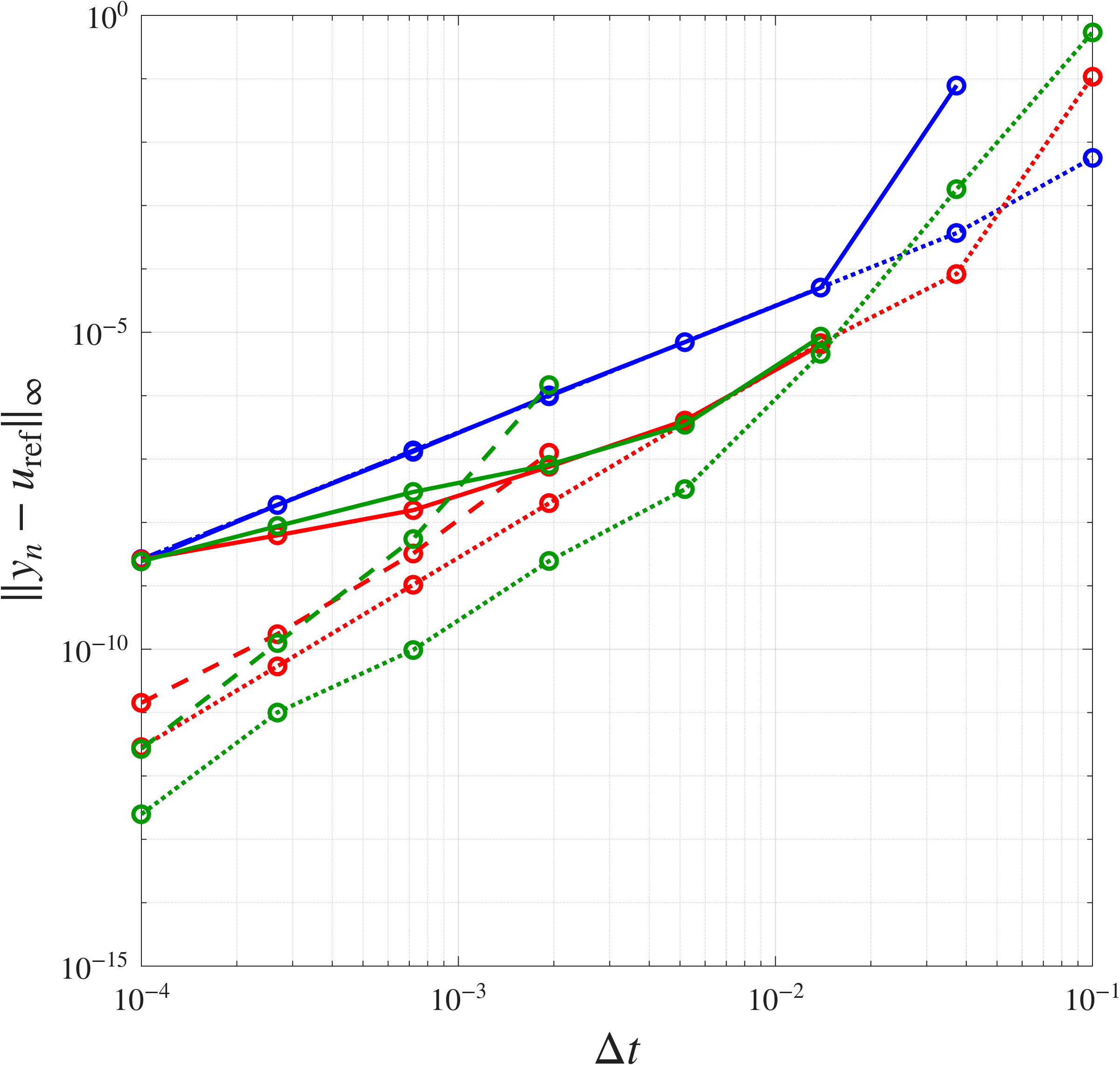}
\end{subfigure}
\caption{Linearized porous medium 
equations with $N_x =64$ points and a perturbation of $\epsilon =10^{-4}$.
Blue, red and green are for SDIRK2, SDIRK3 and SDIRK4 respectively.
Left: one correction. Right: $p-1$ corrections.
Solid lines are no corrections, 
dashed lines are explicit corrections,
dotted lines are $\Phi_J$ stabilized static corrections.
\label{fig:PMCorNx65ep4}}
\end{figure}

On the top of Figure \ref{fig:PMcorNx32}, 
we see that the  explicit corrections (dashed line) 
improve the accuracy of SDIRK3 and get the correct order for 
the SDIRK4 without  perturbations.
However, the corrected method becomes unstable when $\dt$ is sufficiently large 
(this is seen as the dashed lines disappear).
As more explicit corrections are added (top right) this instability appears sooner,
i.e. for a smaller $\dt$.
When the  stabilized corrections are added (using  $\Phi_J$)
we observe that the stability as well as accuracy is improved (dotted lines). 

On the bottom of Figure \ref{fig:PMcorNx32}, we repeat this process with a
perturbation of $\epsilon = 10^{-4}$. The same behavior is seen for the explicit
correction: some improvement in accuracy for small enough $\dt$, but for larger $\dt$
the explicit corrections cause instability.
Here we see that the stabilized corrections (dotted lines) improve both the stability
and accuracy of the method, this impact is most pronounced when we have $p-1$
corrections (bottom left).

In Figures \ref{fig:PMCorNx65ep4} we show the impact of the corrections
for a larger problem $N_x=64$, with a perturbations of $\epsilon = 10^{-4}$.
The explicit corrections (dashed lines) perform as we've come to expect: 
they improve accuracy but only for small enough $\dt$. For larger $\dt$
these explicit corrections may lead to catastrophic instabilities. What we see in this figure
that we have not seen before is that all the uncorrected methods (SDIRK2,
SDIRK3, and SDIRK4) are unstable for large enough $\dt$, and they 
are {\em stabilized and corrected} using the stabilized corrections
(dotted lines). In this example, the stabilized corrections not only 
improve the accuracy of the method, they improve the stability as well.

\begin{figure}[H]
     \centering
    \begin{subfigure}[b]{\textwidth}
        \centering
        \includegraphics[width=0.49\textwidth]{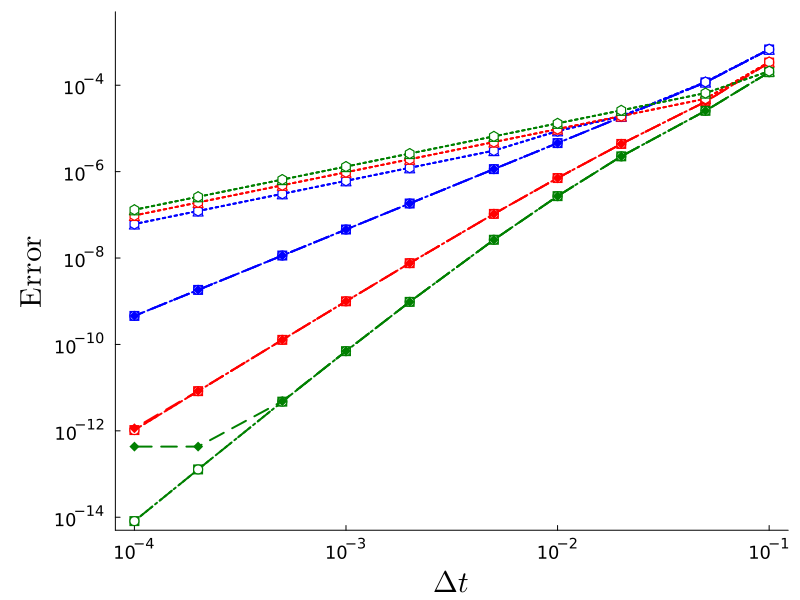}
        \includegraphics[width=0.49\textwidth]{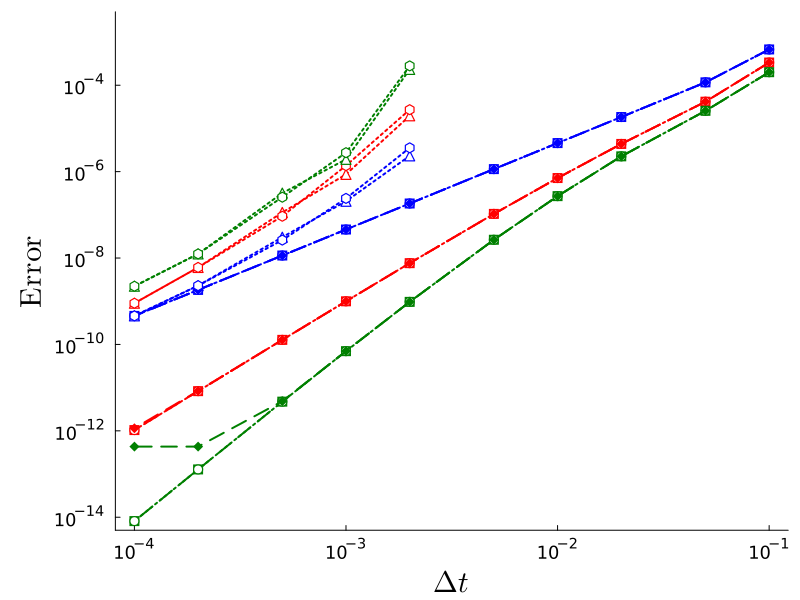}
    \end{subfigure} \\
    \begin{subfigure}[b]{\textwidth}
        \centering
        \includegraphics[width=0.49\textwidth]{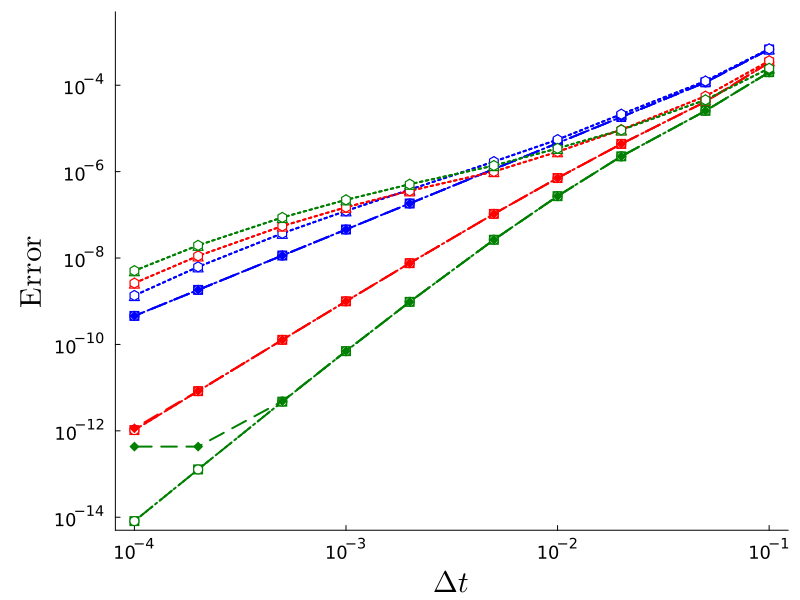}
        \includegraphics[width=0.49\textwidth]{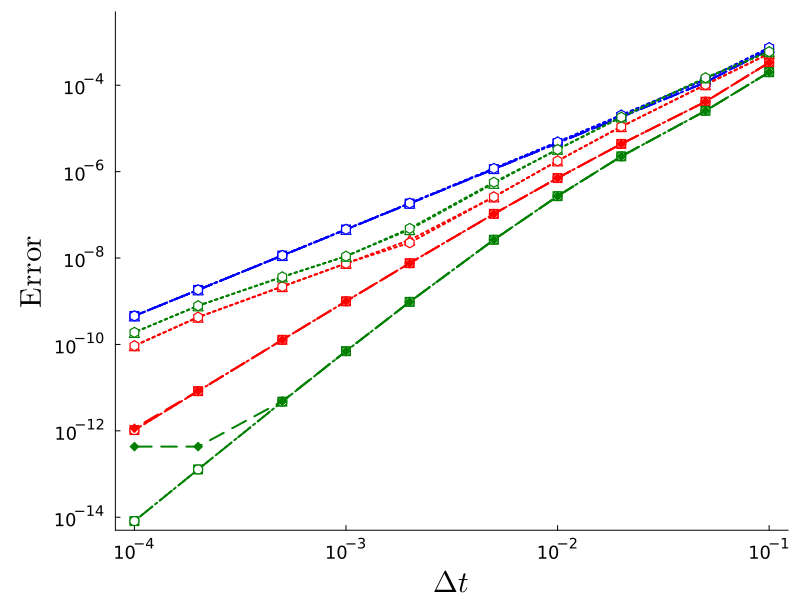}
    \end{subfigure}
\caption{Mixed precision porous medium equations with
$N_x = 200$ spatial points.  
SDIRK2 \eqref{MPIMR} in blue;  
SDIRK3 \eqref{MPSDIRK3} in red; 
SDIRK4 \eqref{MPSDIRK4} in green. 
Top Left: No corrections. 
Top Right: Two explicit corrections. 
Bottom Left:  Two $\Phi_{EIN}$ stabilized corrections.  
Bottom Right: Two $\Phi_J$ stabilized corrections.
(See legend in Burgers' mixed precision figure).
    \label{fig:PMMPFT}} 
\end{figure} \vspace{-.4in}

\subsubsection{Mixed precision implementation}
Here we use Equation \eqref{eq:PM} on domain $(0,2 \pi)$ and initial
condition $u(x,0)= \frac{1}{2} \sin(x)$.
We compute the implicit solve in low precision as described above.
This procedure includes an inherent correction which allows an accuracy of $\varepsilon \dt$
at the final time. To further correct, we can use the explicit 
correction in high precision
\[ y_{[k+1]}^e =y_{exp} + \alpha \dt D_{xx}( y_{[k]}^3) .\]
The stabilized corrections are then applied in high precision
 \[ y_{[k+1]} = y_{[k]} + \Phi  \left(y_{[k+1]}^e - y_{[k]} \right) ,\]
where  the stabilization matrix $\Phi$ is computed in high precision
\[\Phi = \big(I - \mu \dt D_{xx} \big)^{-1} .\]

In Figure \ref{fig:PMMPFT} we show the impact of the mixed precision procedure
on the errors of the method, compared to a reference solution with 
$N_x = 200$ spatial points.  We show the
SDIRK2 \eqref{MPIMR} in blue, the 
SDIRK3 \eqref{MPSDIRK3} in red, and the 
SDIRK4 \eqref{MPSDIRK4} in green.
On the top left we see that in the absence of corrections
the mixed precision simulations have similar poor performance: the perturbation errors
dominate the solutions. Two explicit corrections (top right) improve the accuracy for very
small $\dt$ but ruin the stability for slightly larger $\dt$ (note the dotted lines disappearing).
 Two $\Phi_{EIN}$ stabilized corrections remain stable and correct the errors,
 but two $\Phi_J$ stabilized corrections are even more effective at improving the accuracy.
 
Figure \ref{fig:PMMPFrozen} investigates further the effect of 
$\Phi_J$ stabilized corrections from one to three corrections.
We see that more $\Phi_J$ corrections continually improve the accuracy of the solution
without adversely impacting stability for smaller $\dt$. This is less
clear-cut as $\dt$ is larger, in which case fewer corrections may be better. 
This result implies that more corrections may continue to provide improvement 
for some values of $\dt$, but not others. This   suggests that a strategy 
that measures the residual and sets a  tolerance for correction as well as a maximum
number of corrections may be advantageous in practice.

\begin{figure}[!htb]
     \centering
    \begin{subfigure}[b]{\textwidth}
        \centering
        \includegraphics[width=0.32\textwidth]{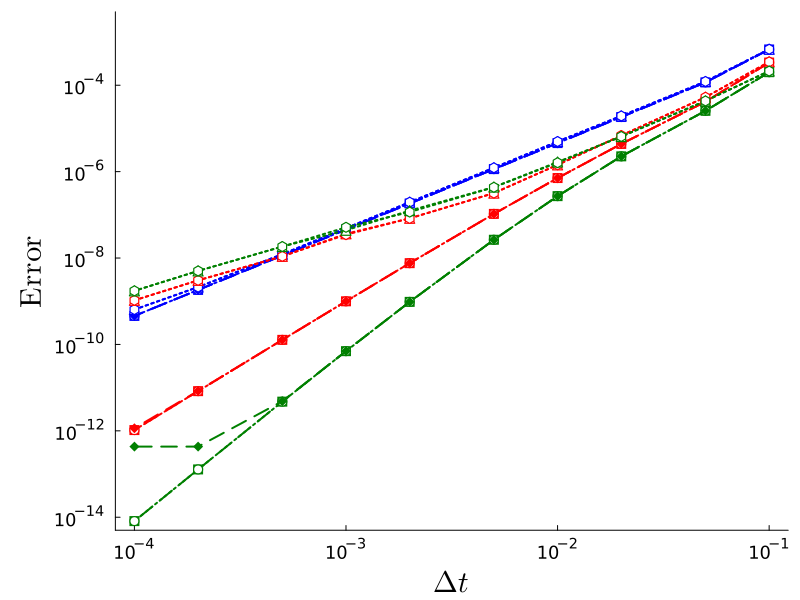}
        \includegraphics[width=0.32\textwidth]{Plots/MP_PM/Figure_AllMethods_Quad_Nx200__2_corr_1.png}
        \includegraphics[width=0.32\textwidth]{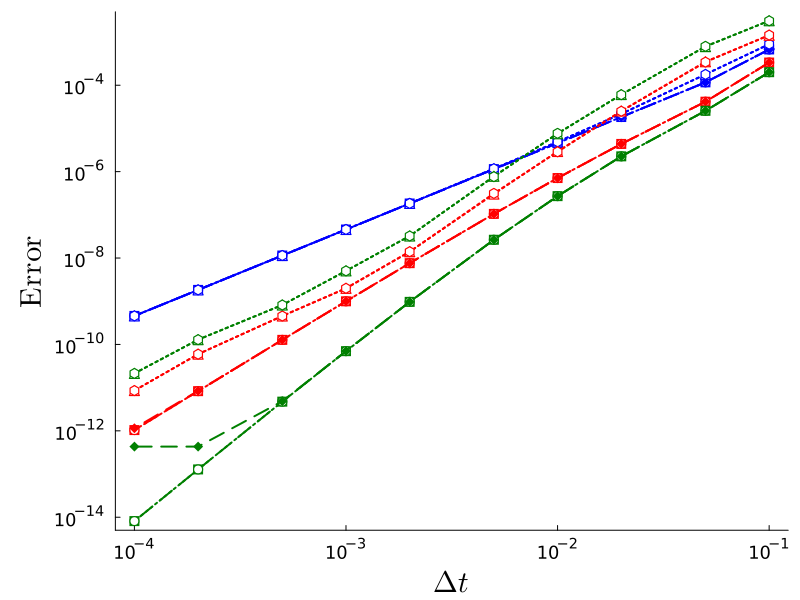}
    \end{subfigure} 
\caption{Mixed precision porous medium equations with
$N_x = 200$ spatial points.  
SDIRK2 \eqref{MPIMR} in blue;  
SDIRK3 method \eqref{MPSDIRK3} in red; 
SDIRK4 method \eqref{MPSDIRK4} in green. 
Left:  one $\Phi_J $ correction. 
Middle:  two $\Phi_J $ corrections.  
Right: three  $\Phi_J $ correction.
    \label{fig:PMMPFrozen}} \vspace{-.2in}
\end{figure}

% \begin{figure}[t]
% \centering
% \begin{subfigure}[b]{\textwidth}
% \centering
% \includegraphics[width=0.48\textwidth]{Plots/MP_PM/MP_PMNx250Cor.jpeg}
% \includegraphics[width=0.48\textwidth]{Plots/MP_PM/MP_PMNx252ExCor.jpeg}
% \end{subfigure} \\
% \begin{subfigure}[b]{\textwidth}
% \centering
% \includegraphics[width=0.48\textwidth]{Plots/MP_PM/MP_PMNx252EINCor.jpeg}
% \includegraphics[width=0.48\textwidth]{Plots/MP_PM/MP_PMNx252FJCor.jpeg}
% \end{subfigure} \\
% \caption{Linearized porous medium 
% equations with $N_x =24$ points.
% Blue, red and green are for second order SDIRK2, third order SDIRK3 
% and fourth order SDIRK4 respectively.
% Top left: no corrections;
% Top right: explicit correction;
% Bottom left: static EIN-based stabilized correction;
% Bottom right: static Jacobian-based stabilized correction.
% \label{fig:MPPMcorNx24}}
% \end{figure}

\section{Conclusions} \label{sec:conclusions}

In this work we analyzed the impact of the perturbation errors  
of mixed accuracy DIRK methods \eqref{MPDIRK}
with coefficients that satisfy the conditions \eqref{Coefficients}.
We showed that for contractive problems, the perturbation introduced 
by replacing $f$ with $f_\varepsilon$ in the implicit solve results 
(for large enough $\dt$) in  an error growth at each time-step of
\[ \dt^2 L \Theta \]
where $\Theta = O(\varepsilon)$.
Thus we can conclude that the error growth at some fixed final time $T_f$
is bounded by  
\[ O(\varepsilon \dt L  T_f) .\]
%{\color{red}  Here I think we should just refer to the equation where theta is defined rather than repeat the definition of Theta.} -- Done.
This means that the errors only grow linearly with time, and that provided that 
the perturbation $ \varepsilon $ is small enough compared to the 
time-step and stiffness of the problem, the error growth over time is well-behaved.

We note that the accuracy conditions were described in \cite{Grant2022}. In that work it was shown
why a  low precision implementation will lead to a growth of $O(\varepsilon/\dt)$ over time,
while a naive mixed precision implementation (where all $f$ are in low precision but the rest 
of the solution is in high precision) will lead to a growth or  $O(\varepsilon)$ over time.
In that work, the order conditions were described that allow this type of $O(\varepsilon \dt)$ 
growth over time. In this work we build on this result by providing the stability 
analysis that tracks the growth of the errors over time 
and allows us to understand how to control the final time error
by controlling the perturbation $\varepsilon$ and the design of the method.
Unfortunately, we cannot always directly control  $\varepsilon$, which is determined by 
the type of approximation $f_\epsilon$, which in turn may depend on $\dt$,  machine precision 
$\epsilon_{prec}$, and size of the  system $N_x$.  In addition, $\varepsilon$ itself 
may inherit some of the stiffness $L$ of the problem.

To better damp out these perturbation errors and  improve the order of accuracy of the perturbed method, explicit corrections were proposed in 
\cite{Grant2022} and studied in \cite{Burnett1, Burnett2}. 
While these do an excellent job improving the accuracy of the solution for small enough $\dt$, 
they may adversely impact  the stability of the numerical solution when $\dt$ is large.
In this work, we propose a strategy for stabilizing these corrections, 
and describe several choices for the stabilization matrix.  Using the analysis presented in 
Section \eqref{sec:stability} we can explain how these corrections improve the stability 
and accuracy of the solution. We  also numerically explore the stability and accuracy of 
the stabilized correction approach on three test cases. This analysis opens the possibility
of exploring inexpensive and stable corrections that allow us to efficiently
implement mixed accuracy and mixed precision problems while obtaining highly accurate solutions.

\section*{\sc Acknowledgements}
This material is based upon work supported by the 
National Science Foundation under Grant No. DMS-1929284 
while four of the  authors were in residence at the Institute for Computational and Experimental Research in Mathematics in Providence, RI, during the ``Empowering a Diverse Computational Mathematics Research Community'' program.
The authors' research was supported in part by AFOSR Grant No. FA9550-23-1-0037 and
DOE  Grant No. DE-SC0023164 Subaward RC114586.
SG acknowledges the support of  Mass Dartmouth’s Marine and Undersea Technology 
(MUST) Research Program funded by the ONR Grant No. N00014-20-1-2849. 
MS acknowledges support from the National Science Foundation PRIMES program under Grant No. DMS-2331890.
The authors acknowledge the Unity Cluster managed by the Research Computing 
\& Data team at the University of Massachusetts Amherst, and the UMassD shared 
cluster as part of the Unity cluster, supported by  
AFOSR DURIP grant FA9550-22-1-0107.

\section*{\sc Author Contribution Statement}

\noindent{\bf  John Driscoll} investigated numerous approaches and test cases.
He was primarily responsible for coding up the EIN and Jacobian-base approaches for
stabilizing the linearizations and perturbation. JD reviewed the entire manuscript and 
suggested edits.

\noindent{\bf Sigal Gottlieb} was responsible for conceptualization of this project, 
and worked as part of the original research team at the ICERM summer program.
With ZJG, She was primarily responsible for much of the analysis, 
including that in Theorem \ref{thm:FinalConv}. 
She suggested numerical tests and the mixed precision correction approaches.
SG was primarily responsible for writing and editing the manuscript.

\noindent{\bf Zachary J. Grant} worked as part of the original 
research team at the ICERM summer program and was part of the project 
from shortly after the conceptualization. He was originally responsible 
for developing the perturbation  framework for DIRK methods, for the explicit 
corrections, and for  the idea of stabilizing the explicit corrections with an additional 
implicit term. With SG, he was primarily responsible for much of the analysis, especially 
the matrix based correction analysis. ZJG co-wrote Sections 2, 3, 4, and 5.
He carefully proofread the entire paper,  and made numerous editorial suggestions 
to improve the presentation.

\noindent{\bf César Herrera}
worked as part of the original research team at the ICERM summer program
and was part of the project from shortly after the conceptualization.
He was  involved in discussions on the underlying ideas for efficient and stable corrections.
CH was responsible for many numerical tests and graphs.
He provided the expertise on mixed precision implementation in julia language, 
and all the related numerical results. He read, commented, and edited the entire manuscript.

\noindent{\bf Tej Sai Kakumanu} was primarily responsible for the Broyden correction
approaches. He investigated many Broyden-based strategies for stabilized corrections,
including the per-step, per-stage, and per-iteration approaches.
He was primarily responsible for coding up the Broyden-based stabilized 
corrections for the linearized and perturbed problems.
TSK reviewed the entire manuscript and  suggested edits.

\noindent{\bf Michael H. Sawicki}
studied Jacobian and Broyden based stabilized corrections. He 
wrote code for many linearization and correction based simulations.
MHS showed that Broyden corrections of an initial Jacobian based $\Phi$
perform better than continual corrections.

\noindent{\bf Monica Stephens} 
worked as part of the original research team at the ICERM summer program
and was part of the project from shortly after the conceptualization.
She tested multiple correction approaches based on the explicit methods.
MS carefully reviewed the entire manuscript for  correctness
and  made multiple editorial changes that contributed to the clarity of the 
manuscript.

\section*{\sc Availability of Codes}
{\em All codes will be place in a github repository before publication. The codes are currently being
cleaned up, commented, and made easier for people to run. The final version of this manuscript will
list the code information and github addess in this section.
}

\end{document}